\documentclass[12pt,a4paper, reqno]{amsart}
\usepackage[a4paper]{geometry}
\usepackage{amsthm}
\usepackage{amssymb}
\usepackage{amsmath}
\usepackage{mathtools}
\usepackage[utf8]{inputenc}
\usepackage{comment}

\usepackage[T1]{fontenc}

\usepackage{hyperref}

\usepackage{wasysym}
\usepackage{color}
\usepackage{enumitem}

\usepackage{tikz}
\usetikzlibrary{positioning}
\usetikzlibrary{decorations.pathreplacing}
\usetikzlibrary{patterns}
\pgfdeclarepatternformonly[\LineSpace]{my north east lines}{\pgfqpoint{-1pt}{-1pt}}{\pgfqpoint{\LineSpace}{\LineSpace}}{\pgfqpoint{\LineSpace}{\LineSpace}}%
{
    \pgfsetlinewidth{0.4pt}
    \pgfpathmoveto{\pgfqpoint{0pt}{0pt}}
    \pgfpathlineto{\pgfqpoint{\LineSpace + 0.1pt}{\LineSpace + 0.1pt}}
    \pgfusepath{stroke}
}

\newdimen\LineSpace
\tikzset{
    line space/.code={\LineSpace=#1},
    line space=3pt
}

\newtheorem{thm}{Theorem}[section]
\newtheorem{cor}[thm]{Corollary}
\newtheorem{lem}[thm]{Lemma}
\newtheorem{prop}[thm]{Proposition}

\theoremstyle{definition}

\newtheorem{eks}[thm]{\sc Example}
\theoremstyle{remark}
\newtheorem{rem}[thm]{Remark}

\numberwithin{equation}{section}

\DeclareMathOperator{\Lip}{Lip}

\DeclareMathOperator{\dent}{dent}

\DeclareMathOperator{\clconv}{\overline{conv}}
\DeclareMathOperator{\conv}{conv}
\DeclareMathOperator{\supp}{supp}

\DeclareMathOperator{\N}{\mathbb{N}}
\newcommand{\seg}[3]{[#1,#2]_{#3}}

\title[Daugavet- and Delta-points in spaces of Lipschitz functions]{Daugavet- and Delta-points in spaces of Lipschitz functions}
\author{Triinu Veeorg}
\address{Institute of Mathematics and Statistics, University of Tartu, Narva~mnt 18, 51009, Tartu, Estonia}
\email{triinu.veeorg@ut.ee}
\subjclass{Primary 46B04; Secondary 46B20}
\keywords{Lipschitz function spaces, Lipschitz-free spaces; Daugavet property, Daugavet-points, delta-points.}

\begin{document}
\begin{abstract}
    A norm one element $x$ of a Banach space is a Daugavet-point (respectively,~a $\Delta$-point) if every slice of the unit ball (respectively,~every slice of the unit ball containing $x$) contains an element that is almost at distance 2 from $x$. We prove the equivalence of Daugavet- and $\Delta$-points in spaces of Lipschitz functions over proper metric spaces and provide two characterizations for them. Furthermore, we show that in some spaces of Lipschitz functions, there exist $\Delta$-points that are not Daugavet-points. Lastly, we prove that every space of Lipschitz functions over an infinite metric space contains a $\Delta$-point but might not contain any Daugavet-points.
\end{abstract}

\maketitle

\section{Introduction}
Let $X$ be a Banach space and $x\in S_X$. According to \cite{AHLP} we say that
\begin{enumerate}
    \item $x$ in $S_X$ is a \emph{Daugavet-point} if $B_X = \clconv \Delta_\varepsilon(x)$ for every $\varepsilon>0$,\smallskip
    \item $x$ in $S_X$ is a \emph{$\Delta$-point} if $x\in \clconv\Delta_\varepsilon(x)$ for every $\varepsilon>0$, 
\end{enumerate}
    where
$$\Delta_\varepsilon(x) = \big\{y\in B_X\colon \|x-y\|\geq 2-\varepsilon\big\}.$$
These concepts were introduced as local versions of 
the Daugavet property and the diametral local diameter two property, respectively (see \cite{AHLP}, \cite{DW01}).  The properties of these points have been studied in various papers (see, e.g., \cite{ALM}, \cite{DJR}, \cite{HPV}, \cite{JRZ}, \cite{Vee}).

Jung and Rueda Zoca started the study of Daugavet-points in spaces of Lipschitz functions in \cite{JRZ} and so far it has been shown that every local function is a Daugavet-point (see {\cite[Proposition 3.4, Theorem~3.6]{JRZ}}, {\cite[Theorem~1.4]{HOP}}). 
The purpose of this paper is to extend our knowledge of these points in spaces of Lipschitz functions.  Throughout the paper, $M$ is a metric space with metric $d$ and a fixed point 0. We denote by $\Lip_0(M)$ the Banach space of all Lipschitz functions $f\colon M\rightarrow\mathbb{R}$ with $f(0)=0$ equipped with the obvious linear structure and the norm
$$\|f\|:=\sup\Big\{\frac{|f(x)-f(y)|}{d(x,y)}\colon x, y\in M, x\neq y\Big\}.$$ 
Let $\delta\colon M\rightarrow \Lip_0(M)^*$ be the canonical isometric embedding of $M$ into $\Lip_0(M)^*$, which is given by $x\mapsto \delta_x$ where $\delta_x(f)=f(x)$. 
The norm closed linear span of $\delta(M)$ in $\Lip_0(M)^*$ is called the \emph{Lipschitz-free space over $M$} and is denoted by $\mathcal{F}(M)$ (see \cite{GOD} and \cite{Weaver} for the background).  An element in $\mathcal{F}(M)$ of the form 
$$m_{xy}:=\frac{\delta_x-\delta_y}{d(x,y)}$$
for $x, y\in M$ with $x\neq y$ is called a \emph{molecule}. We denote the set of all molecules by $\mathcal{M}(M)$.
Clearly, $\mathcal{M}(M)\subseteq S_{\mathcal{F}(M)}$, and it is well known that
\[\clconv\big(\mathcal{M}(M)\big)=B_{\mathcal{F}(M)}\quad\text{ and }\quad
\mathcal{F}(M)^*= \Lip_0(M).\]

In Section 2, we introduce some additional definitions and provide two new results that are later used in Sections 3 and 4. We introduce a stronger condition that all $\Delta$-points must satisfy. This tool also provides an easy proof that quasi denting points can not be $\Delta$-points. Furthermore, we show that if $f\in S_{\Lip_0(M)}$ is not local, then every slice of $B_{\mathcal{F}(M)}$ whose defining functional is $f$ contains a denting point of $B_{\mathcal{F}(M)}$, thus generalizing {\cite[Lemma~3.13]{CGMR}}.

In Section 3, we give a complete characterization for $\Delta$-points in Lipschitz-free spaces over proper metric spaces, thus giving a positive answer for Problem 2 in \cite{Vee} for the case when $M$ is a proper metric space. The same characterization was first provided for molecules (see {\cite[Theorem~4.7]{JRZ}}) and later generalized for convex combinations of molecules (see {\cite[Theorem~4.4]{Vee}}). 

Section 4 is dedicated to studying Daugavet- and $\Delta$-points in spaces of Lipschitz functions over proper metric spaces. We prove that Daugavet- and $\Delta$-points are equivalent in these spaces and are also equivalent to their $w^*$-versions. We finish the section by adapting that result to a different class of metric spaces.

In Section 5, we prove that every space of Lipschitz functions over an infinite metric space has a $\Delta$-point. We also show that spaces of Lipschitz functions over infinite metric spaces that satisfy certain conditions (for example, are unbounded or not uniformly discrete)  contain Daugavet-points. Furthermore, we provide an example of infinite-dimensional space of Lipschitz functions that does not have any Daugavet-points nor even $w^*$-Daugavet-points. We finish the paper by showing that in spaces of Lipschitz functions, Daugavet- and $\Delta$-points are not equivalent, nor are they equivalent to their $w^*$-versions.

We consider only real Banach spaces and use standard notation. For a Banach space $X$ we will denote the closed unit ball by $B_X$, the unit sphere by $S_X$ and the dual space by $X^*$.  By $B(x,r)$ we denote the closed ball of radius $r$ with the center $x$.

%\section{Kuratowski measure of non-compactness of slices containing $\Delta$-points}
\section{Definitions and preliminary results}
Both Daugavet- and $\Delta$-points have equivalent definitions using slices. By a \emph{slice of the unit ball $B_X$} we mean the set
\[
S(x^*, \alpha)=\{y\in B_X \colon x^*(y)>1-\alpha\}
\]
where $x^*\in S_{X^*}$ and $\alpha>0$. By a \emph{$w^*$-slice of the unit ball $B_{X^*}$} we mean a slice of $B_{X^*}$ whose defining functional belongs to $S_X$.
In our results we use both the original definitions of Daugavet- and $\Delta$-points as well as the following equivalent definitions by slices (see {\cite[Lemmas 2.1, 2.2]{AHLP}}):
\begin{enumerate}
    \item $x\in S_X$ is a Daugavet-point if and only if for every slice $S$ of $B_X$ and for every $\varepsilon>0$ there 
exists $y\in S$ such that $\|x-y\|\ge 2-\varepsilon$;
    \item $x\in S_X$ is a $\Delta$-point if and only if for every slice $S$ of $B_X$ with $x\in S$ and for every $\varepsilon>0$ there 
exists $y\in S$ such that $\|x-y\|\ge 2-\varepsilon$.
\end{enumerate}
As our main results are in spaces of Lipschitz functions, which are always dual spaces, we will also examine the natural $w^*$-versions of these points: 
\begin{enumerate}
    \item $x^*\in S_{X^*}$ is a \emph{$w^*$-Daugavet-point} if for every $w^*$-slice $S$ of $B_{X^*}$ and for every $\varepsilon>0$ there 
exists $y^*\in S$ such that $\|x^*-y^*\|\ge 2-\varepsilon$;
    \item $x^*\in S_{X^*}$ is a \emph{$w^*$-$\Delta$-point} if for every $w^*$-slice $S$ of $B_{X^*}$ with $x^*\in S$ and for every $\varepsilon>0$ there 
exists $y^*\in S$ such that $\|x^*-y^*\|\ge 2-\varepsilon$.
\end{enumerate}

Now we are ready to introduce a criterion for $\Delta$-points that allows us to show the equivalence of Daugavet- and $\Delta$-points in spaces of Lipschitz functions over proper metric spaces and provides a nice characterization for them. 

\begin{prop}\label{general}
Let $x_0\in S_{X}$ be a $\Delta$-point. Then for every slice $S(x_0^*,\alpha)$ with $x_0\in S(x_0^*,\alpha)$ there exist sequences $(x_i)$ in $S(x_0^*,\alpha)$ and $(x_i^*)$ in $S_{X^*}$ such that $x_i^*\in S(x_0,1/i)$ for all $i\in \N$, and
\[\|x_i-x_j\|\ge 2-\alpha \quad\text{ and }\quad\|x_i^*-x_j^*\|\ge 2-\alpha\]
for all $i,j\in \N\cup\{0\}$ with $i\neq j$.
\end{prop}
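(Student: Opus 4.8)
The plan is to build the two sequences simultaneously by induction, extracting each new pair $(x_n,x_n^*)$ from a single convex combination of far-points furnished by the $\Delta$-point hypothesis. Write $\beta:=1-x_0^*(x_0)$, so $\beta<\alpha$, and fix once and for all a number $\alpha'\in(\beta,\alpha)$ together with tolerances $\varepsilon_l>0$ chosen so small that $\sum_{l}\varepsilon_l<\alpha'-\beta$, that $\varepsilon_l<1/l$, and that $\alpha'+\varepsilon_l\le\alpha$. At step $n$ I will produce a point $x_n\in S(x_0^*,\alpha')\subseteq S(x_0^*,\alpha)$ with $\|x_0-x_n\|\ge 2-\varepsilon_n$ and then let $x_n^*\in S_{X^*}$ be any norming functional of $x_0-x_n$. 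This automatically gives $x_n^*(x_0)\ge 1-\varepsilon_n>1-1/n$, i.e. $x_n^*\in S(x_0,1/n)$, together with $x_n^*(x_n)\le -1+\varepsilon_n$; these two facts are the only properties of $x_n^*$ I will use.

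The heart of the matter is the inductive step, with $x_1,\dots,x_{n-1}$ and $x_1^*,\dots,x_{n-1}^*$ already in hand. Using $x_0\in\clconv\Delta_{\varepsilon_n}(x_0)$ I would choose a convex combination $z=\sum_j\lambda_j w_j$ with each $w_j\in\Delta_{\varepsilon_n}(x_0)$ (so $\|x_0-w_j\|\ge 2-\varepsilon_n$) and $\|z-x_0\|$ as small as needed. The decisive point is that the functionals constructed so far nearly norm $x_0$: from $x_0^*(z)\ge 1-\beta-\|z-x_0\|$ and $x_l^*(z)\ge 1-\varepsilon_l-\|z-x_0\|$ for $1\le l\le n-1$, the weighted averages satisfy $\sum_j\lambda_j\big(1-x_0^*(w_j)\big)\le\beta+\|z-x_0\|$ and $\sum_j\lambda_j\big(1-x_l^*(w_j)\big)\le\varepsilon_l+\|z-x_0\|$, all summands being nonnegative. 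A Markov estimate then bounds the $\lambda$-mass of $\{j:x_0^*(w_j)\le 1-\alpha'\}$ by essentially $\beta/\alpha'$ and, for each $l$, the mass of $\{j:x_l^*(w_j)\le 1-\alpha'\}$ by essentially $\varepsilon_l/\alpha'$. Since $\big(\beta+\sum_{l<n}\varepsilon_l\big)/\alpha'<1$ by the choice of $\alpha'$, for $\|z-x_0\|$ small enough these exceptional sets cannot cover all indices, so there is an index $j$ with $x_0^*(w_j)>1-\alpha'$ and $x_l^*(w_j)>1-\alpha'$ for every $l<n$ at once; I set $x_n:=w_j$.

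With $x_n$ so chosen the required distances fall out. Membership $x_n\in S(x_0^*,\alpha')$ is built in, and $\|x_0-x_n\|\ge 2-\varepsilon_n\ge 2-\alpha$ settles the primal pairs involving the index $0$. For $1\le l<n$ the bound $x_l^*(x_n)>1-\alpha'$ combined with $x_l^*(x_l)\le -1+\varepsilon_l$ gives $\|x_n-x_l\|\ge x_l^*(x_n)-x_l^*(x_l)>2-\alpha'-\varepsilon_l\ge 2-\alpha$, and the very same vector $x_n$ separates the functionals: $(x_l^*-x_n^*)(x_n)=x_l^*(x_n)-x_n^*(x_n)>2-\alpha'-\varepsilon_n\ge 2-\alpha$, while $(x_0^*-x_n^*)(x_n)>2-\alpha'-\varepsilon_n\ge 2-\alpha$ using $x_0^*(x_n)>1-\alpha'$. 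Hence all inequalities $\|x_i-x_j\|\ge 2-\alpha$ and $\|x_i^*-x_j^*\|\ge 2-\alpha$ hold. I expect the genuine obstacle to be exactly the simultaneous demand in the inductive step — forcing one and the same far-point of the convex combination to sit inside the prescribed slice and to be remote from every point already selected — and the escape is the observation that all earlier functionals nearly norm $x_0$, so that the convex combination representing $x_0$ must place almost all of its weight on points that nearly norm each of them; keeping the total exceptional mass below $1$ is what the bookkeeping $\sum_l\varepsilon_l<\alpha'-\beta$ is for.
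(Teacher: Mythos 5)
Your proof is correct, and it shares the paper's overall skeleton: an inductive construction in which each new point $x_n$ is simultaneously far from $x_0$ and deep inside all previously produced slices, after which $x_n^*$ is taken to (nearly) norm $x_0-x_n$; the final distance estimates are then identical. However, the device producing $x_n$ is genuinely different. The paper works with the slice reformulation of $\Delta$-points from \cite{AHLP}: it forms the single functional $y^*=\sum_{i=0}^{n-1}x_i^*$, notes that $y^*(x_0)>n-\alpha+\gamma$ because every $x_i^*$ nearly norms $x_0$, and applies the $\Delta$-property to the slice $\{x\in B_X\colon y^*(x)>n-\alpha+\gamma\}$; since each summand is at most $1$, any point of that slice automatically satisfies $x_i^*(x_n)>1-\alpha+\gamma$ for every $i$, and the slice characterization supplies such a point far from $x_0$. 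You instead work directly from the convex-hull definition $x_0\in\clconv\Delta_{\varepsilon_n}(x_0)$: remoteness from $x_0$ comes for free (every $w_j$ lies in $\Delta_{\varepsilon_n}(x_0)$), and simultaneous membership in the $n$ slices is forced by your Markov bound on the convex weights, with the condition $\beta+\sum_l\varepsilon_l<\alpha'$ keeping the total exceptional mass below $1$ once $\|z-x_0\|$ is small enough. The two devices solve the same simultaneity problem in dual ways: the paper's summation trick is shorter and hides all bookkeeping inside one application of the slice lemma, while your argument is self-contained --- it never invokes the equivalence of the convex-hull and slice definitions, only Hahn--Banach --- at the price of the tolerance bookkeeping $\sum_l\varepsilon_l<\alpha'-\beta$ and a step-dependent smallness requirement on $\|z-x_0\|$.
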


\begin{proof}
Fix a slice $S(x_0^*,\alpha)$ with $x_0\in S(x_0^*,\alpha)$. Let $\gamma>0$ be such that $x_0\in S(x_0^*,\alpha-2\gamma)$ and $\gamma<1$. We will construct the sequences $(x_i)$ and $(x_i^*)$ recursively.
%, and we will assume $x_i\in S(x_0^*,\alpha-\gamma/2^i)$ and $x_0\in S(x_i^*,\gamma/2^i)$ instead of $x_i\in S(x_0^*,\alpha)$ and $x_0\in S(x_i^*,\varepsilon)$. 
Choose $x_1\in S(x_0^*,\alpha-\gamma)$ such that $\|x_0-x_1\|>2-\gamma/2$, and then choose $x_1^*\in S_{X^*}$ such that $x_1^*(x_0)>1-\gamma/2$ and $x_1^*(-x_1)>1-\gamma/2$. Then
\[\|x_0-x_1\|>2-\frac{\gamma}{2}> 2-\alpha\]
and
\[\|x_0^*-x_1^*\|\ge x_0^*(x_1)-x_1^*(x_1)>1-\alpha+\gamma+1-\frac{\gamma}{2}>2-\alpha.\]

Assume that we have found $x_1,\ldots,x_{n-1}\in S_{X}$ and $x_1^*,\ldots,x_{n-1}^*\in S_{X^*}$ such that 
\[x_i\in S(x_0^*,\alpha-\gamma)\quad\text{ and }\quad x_0,-x_i\in S(x_i^*,\gamma/2^i)\]
for every $i\in \{1,\ldots,n-1\}$, and 
\[\|x_i-x_j\|\ge 2-\alpha \quad\text{ and }\quad\|x_i^*-x_j^*\|\ge 2-\alpha\]
for every $i,j\in \{0,\ldots,n-1\}$ with $i\neq j$.
Let 
\[y^*=\sum_{i=0}^{n-1} x_i^*.\]
Then 
\[y^*(x_0)=\sum_{i=0}^{n-1} x_i^*(x_0)>1-\alpha+2\gamma+\sum_{i=1}^{n-1}(1-\gamma/2^i)>n-\alpha+\gamma.\]
Since $x_0$ is a $\Delta$-point, there exist $x_n\in S_X$ such that
\[y^*(x_n)>n-\alpha+\gamma\]
and $\|x_0-x_n\|>2-\gamma/2^n$. Choose $x_n^*\in S_{X^*}$ such that $x_n^*(x_0-x_n)>2-\gamma/2^n$. Then $x_0,-x_n\in S(x_n^*,\gamma/2^n)$ and $x_n\in S(x_i^*,\alpha-\gamma)$ for every $i\in \{0,\ldots,n-1\}$.
Furthermore,
\[\|x_n-x_i\|\ge x_i^*(x_n)-x_i^*(x_i)>1-\alpha+\gamma+1-\frac{\gamma}{2^i}>2-\alpha\]
for every $i\in \{1,\ldots,n-1\}$, and
\[\|x_n^*-x_i^*\|\ge x_i^*(x_n)-x_n^*(x_n)>1-\alpha+\gamma+1-\frac{\gamma}{2^n}>2-\alpha\]
for every $i\in \{0,\ldots,n-1\}$.
\end{proof}

Proposition \ref{general} allows us to show that Kuratowski measure of non-compactness is 2 for each slice containing a $\Delta$-point, thus providing us stronger versions of {\cite[Theorems~3.5, 4.2]{ALMP}}. Recall that for a bounded subset $A\subseteq X$, the \emph{Kuratowski measure of non-compactness} $\alpha(A)$ is the infimum of such $\varepsilon>0$ that $A$ can be covered with a finite number of sets with diameter less than $\varepsilon$.
\begin{cor}[cf. {\cite[Theorems~3.5, 4.2]{ALMP}}]\label{quasi_dent}
Let $x\in S_X$ be a $\Delta$-point. Then $\alpha(S)=2$ for every slice $S$ of $B_X$ with $x\in S$. Furthermore, $\alpha\big(S(x,\delta)\big)=2$ for every $\delta>0$.
\end{cor}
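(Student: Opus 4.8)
The plan is to prove both equalities through the standard lower estimate for the Kuratowski measure: if a bounded set $A$ contains, for every $\eta>0$, an infinite subset that is $(2-\eta)$-separated, then $\alpha(A)\ge 2-\eta$ for each such $\eta$ and hence $\alpha(A)=2$, the reverse inequality $\alpha(A)\le\operatorname{diam} A\le 2$ being automatic since both relevant sets lie in a unit ball. Indeed, finitely many sets of diameter $<2-\eta$ cannot cover an infinite $(2-\eta)$-separated family by the pigeonhole principle. Thus in each case it suffices to manufacture, for every $\eta>0$, infinitely many points of the (respectively $w^*$-)slice that are pairwise at distance at least $2-\eta$, and Proposition \ref{general} is precisely the device that produces pairwise-separated families.

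For the $w^*$-slice $S(x,\delta)$ the argument is clean and yields the value $2$ outright. Fix $\eta>0$ and choose, by Hahn--Banach, a functional $g\in S_{X^*}$ with $g(x)=1$; then $x\in S(g,\eta)$, so Proposition \ref{general} applied to the slice $S(g,\eta)$ furnishes a sequence $(x_i^*)$ in $S_{X^*}$ with $x_i^*\in S(x,1/i)$ and $\|x_i^*-x_j^*\|\ge 2-\eta$ for $i\ne j$. As soon as $1/i<\delta$ we have $S(x,1/i)\subseteq S(x,\delta)$, so the tail of $(x_i^*)$ is an infinite $(2-\eta)$-separated subset of $S(x,\delta)$, giving $\alpha\big(S(x,\delta)\big)\ge 2-\eta$; letting $\eta\to 0$ closes this part. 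The point is that here we are free to feed Proposition \ref{general} a slice of $B_X$ of arbitrarily small width in which $x$ sits as a deep point, while the conclusion concerns the fixed $w^*$-slice $S(x,\delta)$ that merely has to contain the tail of $(x_i^*)$.

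For a slice $S=S(x^*,\alpha_0)$ of $B_X$ with $x\in S$ I would again feed $S$, and its sub-slices, to Proposition \ref{general}. For any $\alpha'$ with $1-x^*(x)<\alpha'\le\alpha_0$ the sub-slice $S(x^*,\alpha')$ still contains $x$ and is contained in $S$, so Proposition \ref{general} produces an infinite family in $S(x^*,\alpha')\subseteq S$ that is $(2-\alpha')$-separated, whence $\alpha(S)\ge 2-\alpha'$ for every such $\alpha'$. \textbf{The main obstacle is exactly here.} The separation constant delivered by Proposition \ref{general} is controlled by the width of the slice to which it is applied, and the sub-slices of $S$ that still contain $x$ have width bounded below by $1-x^*(x)$; so this direct route only yields $\alpha(S)\ge 1+x^*(x)$, which falls short of $2$ precisely when $x$ is not a deep point of $S$ (there need be no narrow slice through $x$ sitting inside a shallow $S$). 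To reach the exact value $2$ I expect one must exhibit, for every $\eta>0$, a $(2-\eta)$-separated family living inside the fixed slice $S$, and the delicate point is to keep the separated points produced by the construction of Proposition \ref{general} inside $S$ while pushing their mutual distances up to $2$, rather than up to $2$ minus the width; reconciling this width-dependence with a fixed shallow slice is the step I anticipate to be technically hardest and the place where the full diametral force of the $\Delta$-point hypothesis, beyond the bare separated family, must be used.
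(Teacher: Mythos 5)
Your treatment of the second assertion, $\alpha\big(S(x,\delta)\big)=2$, is correct and matches the paper's intent: feeding Proposition \ref{general} the arbitrarily narrow slice $S(g,\eta)$, where $g\in S_{X^*}$ norms $x$, keeping only the dual sequence $(x_i^*)$ whose tail lands in $S(x,\delta)$, and then applying the pigeonhole estimate for the Kuratowski measure is exactly the right argument there.

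For the first assertion, however, there is a genuine gap, and it sits precisely where you flagged it. The obstacle you describe --- ``there need be no narrow slice through $x$ sitting inside a shallow $S$'' --- is in fact false for norm-one points, and the paper closes the argument with exactly this fact: by the lemma of Ivakhno and Kadets (\cite[Lemma~2.1]{IK}, cited in the paper's proof), if $x\in S_X$ lies in a slice $S(x^*,\delta)$ of $B_X$, then for every $\beta\in(0,\delta)$ there exists $y^*\in S_{X^*}$ with $x\in S(y^*,\beta)$ and $S(y^*,\beta)\subseteq S(x^*,\delta)$. The proof of that lemma is a convex-combination argument: mix $x^*$ with a functional $f$ that nearly norms $x$, putting small weight on $x^*$; the normalized combination defines an arbitrarily narrow slice through $x$ contained in the original one. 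It is essential that $\|x\|=1$ (for interior points of $B_X$ the claim would indeed fail, which may be the source of your worry). Once you have this lemma, your own machinery finishes the proof: Proposition \ref{general} applied to $S(y^*,\beta)$ produces an infinite $(2-\beta)$-separated family inside $S(y^*,\beta)\subseteq S(x^*,\delta)$, so $\alpha\big(S(x^*,\delta)\big)\ge\alpha\big(S(y^*,\beta)\big)\ge 2-\beta$ for every $\beta\in(0,\delta)$, hence $\alpha\big(S(x^*,\delta)\big)=2$. No further ``diametral force'' of the $\Delta$-point hypothesis is needed beyond the separated families you already produce; the missing ingredient is purely this geometric lemma about slices, which shows that the way past your bound $\alpha(S)\ge 1+x^*(x)$ is not to widen the separation inside subslices of the form $S(x^*,\alpha')$, but to pass to narrow subslices defined by different functionals.
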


\begin{proof}
By Proposition \ref{general} any slice $S(x^*,\delta)$ of $B_X$ with $x\in S(x^*,\delta)$ contains countable number of points such that any two points are at least at distance $2-\delta$ from each other. Therefore covering $S(x^*,\delta)$ with a finite number of sets means that at least one of these sets must contain two points that are at least at distance $2-\delta$ from each other. Now recall that by {\cite[Lemma~2.1]{IK}} for every $\beta\in(0,\delta)$ there exist $y^*\in S_X$ such that $x\in S(y^*,\beta)$ and $S(y^*,\beta)\subseteq S(x^*,\delta)$ and thus 
\[\alpha(S(x^*,\delta))\ge \alpha(S(y^*,\beta))\ge 2-\beta.\]
Therefore $\alpha(S(x^*,\delta))=2$.
The equality $\alpha(S(x,\delta))=2$ for every $\delta>0$ can be shown similarly.
\end{proof}

In \cite{ALMP}, these results were used to show that AUS and reflexive AUC spaces do not contain $\Delta$-points, and at the end of Section 4, it was noted that it is not clear if quasi-denting points can be $\Delta$- or Daugavet-points. Recall that $x\in S_X$ is a quasi-denting point of $B_X$ if for every $\varepsilon>0$ there exist a slice $S$ of $B_X$ with $x\in S$ such that $\alpha(S)<\varepsilon$. From Corollary \ref{quasi_dent} it is clear that a quasi-denting point can not be a $\Delta$-point. 

Similarly to Proposition \ref{general} and Corollary \ref{quasi_dent} we can prove the two following statements for $w^*$-$\Delta$-points.
\begin{prop}\label{general_weak}
Let $x_0^*\in S_{X^*}$ be a $w^*$-$\Delta$-point. Then for every $w^*$-slice $S(x_0,\alpha)$ with $x_0^*\in S(x_0,\alpha)$ there exist sequences $(x_i^*)$ in $S(x_0,\alpha)$ and $(x_i)$ in $S_X$ such that $x_i\in S(x_0^*,1/i)$ for every $i\in\N$, and 
\[\|x_i^*-x_j^*\|\ge 2-\alpha \quad\text{ and }\quad\|x_i-x_j\|\ge 2-\alpha\]
for all $i,j\in \N\cup\{0\}$ with $i\neq j$.
\end{prop}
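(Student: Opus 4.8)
The plan is to mirror the recursive construction in the proof of Proposition~\ref{general}, interchanging the roles of $X$ and $X^*$, with one essential modification that I describe below. First I would fix a $w^*$-slice $S(x_0,\alpha)$ with $x_0^*\in S(x_0,\alpha)$ and pick $\gamma\in(0,1)$ with $x_0^*\in S(x_0,\alpha-2\gamma)$. I would then build sequences $(x_i^*)$ and $(x_i)\subseteq S_X$ maintaining, for every $i\ge 1$, the invariants
\[
x_i^*\in S(x_0,\alpha-\gamma)
\quad\text{and}\quad
x_0^*,-x_i^*\in S(x_i,\gamma/2^i),
\]
together with the pairwise separations $\|x_i^*-x_j^*\|\ge 2-\alpha$ and $\|x_i-x_j\|\ge 2-\alpha$ for $i\neq j$. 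These are precisely the invariants of Proposition~\ref{general} read in the dual.

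For the base step I would invoke the $w^*$-$\Delta$-point property directly: since $x_0^*\in S(x_0,\alpha-\gamma)$, there is $x_1^*\in S(x_0,\alpha-\gamma)$ with $\|x_0^*-x_1^*\|>2-\gamma/2$, and then an approximately norming $x_1\in S_X$ with $(x_0^*-x_1^*)(x_1)>2-\gamma/2$, which yields $x_0^*,-x_1^*\in S(x_1,\gamma/2)$. The inductive step is the heart of the construction: setting $y=\sum_{i=0}^{n-1}x_i$, the invariants give $y(x_0^*)>n-\alpha+\gamma$, so $x_0^*$ lies in the $w^*$-slice of $B_{X^*}$ whose defining functional is $y/\|y\|\in S_X$; applying the $w^*$-$\Delta$-point property there produces $x_n^*$ with $y(x_n^*)>n-\alpha+\gamma$ and $\|x_0^*-x_n^*\|>2-\gamma/2^n$. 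From $y(x_n^*)>n-\alpha+\gamma$ and $x_i(x_n^*)\le 1$ one extracts $x_n^*\in S(x_i,\alpha-\gamma)$ for every $i<n$, and a further approximately norming choice of $x_n\in S_X$ with $(x_0^*-x_n^*)(x_n)>2-\gamma/2^n$ restores the invariants. The separation estimates then close verbatim as in Proposition~\ref{general}: one bounds $\|x_n^*-x_i^*\|\ge (x_n^*-x_i^*)(x_i)>2-\alpha$ using the predual element $x_i$, and $\|x_n-x_i\|\ge x_n^*(x_i-x_n)>2-\alpha$ using the functional $x_n^*$.

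The one place where a mechanical dualization fails — and the point I expect to be the main obstacle — is the construction of the auxiliary elements $x_i$. In Proposition~\ref{general} these were obtained by Hahn--Banach as norming functionals; dualizing naively would place them in $S_{X^{**}}$, which would be fatal here, since the test element $y=\sum x_i$ must lie in the predual $X$ for it to cut out a genuine $w^*$-slice to which the $w^*$-$\Delta$-point property applies. I therefore produce each $x_i$ not by Hahn--Banach but from the supremum characterization $\|z^*\|=\sup_{x\in S_X}z^*(x)$, which already furnishes an approximately norming element inside $S_X$. This keeps $(x_i)\subseteq S_X$, hence $y\in X$, and the whole argument goes through; apart from this predual-norming step, everything transfers symbol-for-symbol from the proof of Proposition~\ref{general}.
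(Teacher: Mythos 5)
Your proof is correct and is essentially the paper's own argument: the paper establishes Proposition~\ref{general_weak} precisely by the dualization of Proposition~\ref{general} that you describe (it only remarks that the proof is ``similar''), and your invariants, the aggregated functional $y=\sum_{i=0}^{n-1}x_i$, and the closing separation estimates match it step for step. You also correctly isolated the one genuine subtlety of the transfer --- the norming elements must be chosen approximately in $S_X$ via $\|z^*\|=\sup_{x\in S_X}z^*(x)$ rather than by Hahn--Banach in $S_{X^{**}}$, so that $y$ lies in the predual and defines a genuine $w^*$-slice to which the $w^*$-$\Delta$-point property applies.
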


\begin{cor}
Let $x^*\in S_{X^*}$ be a $w^*$-$\Delta$-point. Then $\alpha(S)=2$ for every $w^*$-slice $S$ of $B_{X^*}$ with $x^*\in S$. Furthermore, $\alpha\big(S(x^*,\delta)\big)=2$ for every $\delta>0$.
\end{cor}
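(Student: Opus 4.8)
The plan is to follow the proof of Corollary~\ref{quasi_dent} almost verbatim, using Proposition~\ref{general_weak} in place of Proposition~\ref{general}, and to treat the two assertions separately.

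For the first assertion, I would fix a $w^*$-slice $S=S(x,\delta)$ of $B_{X^*}$ with $x^*\in S$, where $x\in S_X$. Proposition~\ref{general_weak} applied to $S$ yields a sequence $(x_i^*)$ in $S$ with $\|x_i^*-x_j^*\|\ge 2-\delta$ for all $i\neq j$. Since any covering of $S$ by finitely many sets must, by pigeonhole, place two of these points in one member, every such member has diameter at least $2-\delta$; hence no finite covering by sets of diameter smaller than $2-\delta$ exists, and so $\alpha(S)\ge 2-\delta$. To improve $2-\delta$ to $2$ I would invoke the $w^*$-version of \cite[Lemma~2.1]{IK}: for each $\beta\in(0,\delta)$ there is $y\in S_X$ with $x^*\in S(y,\beta)$ and $S(y,\beta)\subseteq S(x,\delta)$. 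Applying the previous argument to the thinner $w^*$-slice $S(y,\beta)$ (which still contains $x^*$) gives $\alpha(S(y,\beta))\ge 2-\beta$, and monotonicity of $\alpha$ under inclusion gives $\alpha(S)\ge\alpha(S(y,\beta))\ge 2-\beta$. Letting $\beta\to 0$ and using the trivial bound $\alpha(S)\le 2$ then yields $\alpha(S)=2$.

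For the second assertion, fix $\delta>0$ and consider the slice $S(x^*,\delta)$ of $B_X$. The point is that Proposition~\ref{general_weak}, applied to any $w^*$-slice $S(x_0,\gamma)$ containing $x^*$, also produces a sequence $(x_i)$ in $S_X$ with $x_i\in S(x^*,1/i)$ and $\|x_i-x_j\|\ge 2-\gamma$ for $i\neq j$. Here $\gamma$ may be taken arbitrarily small: as $\|x^*\|=1$, for every $\gamma>0$ there is $x_0\in S_X$ with $x^*(x_0)>1-\gamma$, so that $x^*\in S(x_0,\gamma)$ and the proposition applies. For every $i\ge 1/\delta$ we have $x_i\in S(x^*,1/i)\subseteq S(x^*,\delta)$, so $S(x^*,\delta)$ contains infinitely many points that are pairwise at distance at least $2-\gamma$, and the pigeonhole argument above gives $\alpha(S(x^*,\delta))\ge 2-\gamma$. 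As $\gamma>0$ is arbitrary, $\alpha(S(x^*,\delta))=2$. In contrast to the first assertion, no appeal to \cite{IK} is needed here, because the points $x_i$ already lie in the shrinking slices $S(x^*,1/i)$.

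The step that I expect to need the most care is the upgrade from $2-\delta$ to $2$ in the first assertion, namely the $w^*$-version of \cite[Lemma~2.1]{IK}. One has to make sure that the nested slice produced there can be chosen to be a genuine $w^*$-slice, i.e.\ with defining functional lying in $X$ and not merely in $X^{**}$; the construction in \cite{IK} should carry over to $w^*$-slices of a dual ball, but this is precisely the place where one must exploit the dual structure rather than regard $B_{X^*}$ as an abstract unit ball.
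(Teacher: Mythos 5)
Your proposal is correct and takes essentially the same route as the paper: the paper proves this corollary only by remarking that it is "similar" to Corollary~\ref{quasi_dent}, whose proof is precisely your combination of Proposition~\ref{general_weak} plus the pigeonhole bound plus the Ivakhno--Kadets lemma for the first assertion, and the shrinking slices $S(x^*,1/i)$ (with arbitrarily thin defining $w^*$-slices obtained from approximate norming of $x^*$) for the second. The step you flag, the $w^*$-version of \cite[Lemma~2.1]{IK}, is indeed the only detail the paper leaves implicit, and it does hold: in the standard construction one replaces the supporting functional by $g\in S_X$ with $x^*(g)>1-\eta$, choosing $\eta$ small \emph{after} fixing the large combination parameter, so exact norm-attainment in the predual is never needed.
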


\begin{rem}\label{remark_conv}
From {\cite[Lemma~2.2]{JRZ}} we see that if $A\subseteq B_X$ is such that every subslice of $S(x_0^*,\alpha)$ contains an element from the set $A$, then in Proposition \ref{general} we may choose the elements of the sequence $(x_i)$ from the set $A$. Furthermore, if $A^*\subseteq B_{X^*}$ is such that for some $\varepsilon>0$ every subslice of $S(x_0,\varepsilon)$ contains an element from the set $A^*$, then in Proposition \ref{general} we may choose the elements of the sequence $(x_i^*)$ from the set $A^*$. Similar statements hold for Proposition \ref{general_weak}. In particular, this is useful for examining Lipschitz-free spaces and their duals, since $\clconv(\mathcal{M}(M))=B_{\mathcal{F}(M)}$, i.e., each slice of $B_{\mathcal{F}(M)}$ contains a molecule. 
\end{rem}

As a last part of this section we will adjust the proofs of {\cite[Lemma~2.4]{Vee}} and {\cite[Theorem~2.1]{Vee}} to show that if $f\in S_{\Lip_0(M)}$ is not local, then for every $\alpha>0$ the slice $S(f,\alpha)$ contains a denting point of $B_{\mathcal{F}(M)}$. This has previously been proved for the case of compact metric space (see {\cite[Lemma~3.13]{CGMR}}). An element $f\in \Lip_0(M)$ is called \emph{local} if for every $\varepsilon>0$ there exist $u,v\in M$ with $u\neq v$ such that $f(m_{uv})>\|f\|-\varepsilon$ and $d(u,v)<\varepsilon$. Recall that $x\in B_X$ is a \emph{denting point of $B_X$} if there exist  slices of $B_X$ with arbitrarily small diameter that contain $x$. We denote the set of all denting points of $B_X$ by $\dent (B_X)$.
For every $u,v\in M$ and $\delta>0$ let
\[\seg{u}{v}{\delta}:=\big\{p\in M \colon d(u,p)+d(v,p)<d(u,v)+\delta\big\}.\]
\begin{lem}[cf. {\cite[Lemma~2.4]{Vee}}]\label{lemma_dent}
Assume that $M$ is complete, and let $u, v\in M$ and $r,s,\delta>0$ with $r+s<d(u,v)$ be such that
$$\seg{u}{v}{\delta}\subseteq B(u,r)\cup B(v,s).$$
Then there exist $x\in B(u,r)$ and $y\in B(v,s)$ such that $m_{xy}$ is a denting point of $B_{\mathcal{F}(M)}$ and 
\[d(u,x)+d(v,y)+d(x,y)<d(u,v)+\delta.\]
\end{lem}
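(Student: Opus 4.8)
The plan is to construct the denting point by finding a pair $(x,y)$ that realizes the obstruction built into the hypothesis, and then to verify it is denting by exhibiting slices of arbitrarily small diameter. The key structural fact I would rely on is that the condition $\seg{u}{v}{\delta}\subseteq B(u,r)\cup B(v,s)$ with $r+s<d(u,v)$ forces any point nearly on the ``metric segment'' between $u$ and $v$ to cluster near one of the two endpoints, so that such points cannot accumulate in the gap. First I would consider the collection of triples $(x,y)$ with $x\in B(u,r)$, $y\in B(v,s)$, and $d(u,x)+d(v,y)+d(x,y)<d(u,v)+\delta$; the hypothesis guarantees this set is nonempty (take $x=u$, $y=v$). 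The aim is to minimize $d(x,y)$ over an appropriate such family — or to pass to a limit — in order to produce a pair that is ``strongly isolated,'' in the sense required to make $m_{xy}$ denting.

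Next I would recall the standard criterion, following the approach of \cite[Lemma~2.4, Theorem~2.1]{Vee}, that a molecule $m_{xy}$ is a denting point of $B_{\mathcal{F}(M)}$ provided the pair $(x,y)$ satisfies a suitable local optimality/separation property: roughly, that for every $\eta>0$ there is a $\delta'>0$ so that any point $p$ with $d(x,p)+d(p,y)<d(x,y)+\delta'$ must lie within $\eta$ of $x$ or of $y$. This is precisely the kind of segment-emptiness condition the hypothesis provides, so the main work is to propagate the emptiness of $\seg{u}{v}{\delta}$ (away from the two balls) down to emptiness of the analogous segment for the chosen pair $(x,y)$. Completeness of $M$ enters here: I would use it either to take a genuine minimizer via a compactness-type argument on the relevant distances, or to form a Cauchy sequence of candidate pairs whose limit inherits the separation property. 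The inequality $d(u,x)+d(v,y)+d(x,y)<d(u,v)+\delta$ should then fall out automatically from the construction, since each candidate pair is required to satisfy it and the bound is preserved in the limit.

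The hard part, I expect, will be the verification that the limiting pair actually yields a \emph{denting} point rather than merely a strongly exposed or preserved-extreme point, and in particular controlling the diameter of the slices uniformly. Concretely, one must show that the separation property (no near-segment points except near the endpoints) survives the limiting/minimization procedure without degenerating — e.g.\ ensuring $x\neq y$ so that $m_{xy}$ is genuinely a molecule, and that the constant governing the separation does not blow up. The conditions $r+s<d(u,v)$ and $p\in B(u,r)\cup B(v,s)$ for $p$ in the segment are what prevent degeneracy: since a near-segment point is forced into one of two balls whose radii sum to strictly less than $d(u,v)$, the chosen $x$ and $y$ stay a definite distance apart. I would therefore carefully track this quantitative gap throughout, as it is the quantity that simultaneously guarantees $x\neq y$, bounds the slice diameters, and certifies the denting property via the criterion adapted from \cite{Vee}.
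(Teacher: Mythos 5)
Your plan follows the same route as the paper's proof: there, too, the pair $(x,y)$ is obtained as the limit of a recursively constructed sequence of pairs (the paper imports this construction verbatim from the proof of \cite[Lemma~2.4]{Vee}), completeness supplies the limit, and dentability is certified by exactly the segment criterion you state, namely that for every $\eta>0$ there is $\delta'>0$ with $\seg{x}{y}{\delta'}\subseteq B(x,\eta)\cup B(y,\eta)$; you also correctly identify that $r+s<d(u,v)$ is what keeps $d(x,y)\ge d(u,v)-r-s>0$. However, your proposal has two genuine gaps. First, the alternative of taking ``a genuine minimizer via a compactness-type argument'' is unavailable: $M$ is merely complete, so the infimum of $d(x,y)$ over your constraint set need not be attained, and a minimizing sequence of pairs need not be Cauchy, so completeness alone does not let you pass to a limit of it. The workable alternative is the iterated almost-minimization with rapidly decreasing tolerances that constitutes the proof of \cite[Lemma~2.4]{Vee}; this recursive scheme, which is what actually forces the sequences to be Cauchy and the limit pair to inherit the separation property, is precisely the part your sketch does not supply.

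Second, the claim that the inequality $d(u,x)+d(v,y)+d(x,y)<d(u,v)+\delta$ ``falls out automatically'' because ``each candidate pair is required to satisfy it and the bound is preserved in the limit'' is false on two counts. A strict inequality is not preserved by limits; and, more seriously, you cannot require all candidate pairs to satisfy the same constraint: when a pair $(x_n,y_n)$ is refined to $(x_n,p)$ or $(p,y_n)$ with $p\in\seg{x_n}{y_n}{\delta_n}$, the triangle inequality only gives that the quantity $d(u,\cdot)+d(v,\cdot)+d(\cdot,\cdot)$ can grow by up to $\delta_n$ at each step. The paper deals with this quantitatively: the construction yields the displacement bound $d(x_1,x)+d(y_1,y)+d(x,y)\le d(x_1,y_1)+5\varepsilon_2$, which combined with the constraint on the initial pair gives only $d(u,x)+d(v,y)+d(x,y)<d(u,v)+2\delta$, and the desired bound is then recovered by rerunning the whole construction with $\delta/2$ in place of $\delta$ (legitimate because $\seg{u}{v}{\delta/2}\subseteq B(u,r)\cup B(v,s)$ as well). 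Without this bookkeeping, or the $\delta/2$ trick, your argument does not close.
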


\begin{proof}
Construct four sequences $(x_n)$, $(y_n)$, $(\delta_n)$, and $(\varepsilon_n)$ as was done in {\cite[proof of Lemma~2.4]{Vee}}. Then there exist $x,y\in M$ such that $x_n\rightarrow x$ and $y_n\rightarrow y$. Furthermore,
\[d(x_1,x)+d(y_1,y)+d(x,y)\le d(x_1,y_1)+5\varepsilon_{2}\]
and 
\[d(u,x_1)+d(v,y_1)+d(x_1,y_1)<d(u,v)+\delta.\]
Then 
\begin{align*}
    d(u,x)+d(v,y)+d(x,y)&\le d(u,x_1)+d(v,y_1)+ d(x_1,x)+d(y_1,y)+d(x,y)\\
    &<d(u,v)-d(x_1,y_1)+\delta+ d(x_1,y_1)+5\varepsilon_{2}\\
    &<d(u,v)+2\delta.
\end{align*}
Note that $\seg{u}{v}{\delta/2}\subseteq B(u,r)\cup B(v,s)$, and therefore we can start with $\delta/2$ instead of $\delta$, in which case we get
\[d(u,x)+d(v,y)+d(x,y)<d(u,v)+\delta.\]
\end{proof}

\begin{prop}\label{local_dent}
    Let $f\in S_{\Lip_0(M)}$, and assume that $f$ is not local. Then for every $\alpha>0$ the slice $S(f,\alpha)$ contains a denting point of $B_{\mathcal{F}(M)}$.
\end{prop}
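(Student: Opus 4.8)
The plan is to produce the required denting point by applying Lemma~\ref{lemma_dent} to a carefully chosen pair $(u,v)$ lying deep inside the slice, so that the denting molecule the lemma returns still belongs to $S(f,\alpha)$. Throughout I may assume $M$ is complete (replacing $M$ by its completion changes neither $\Lip_0(M)$, nor $\mathcal{F}(M)$, nor the locality of $f$, nor the slice $S(f,\alpha)$), and that $\alpha$ is as small as convenient. Non-locality furnishes a constant $\varepsilon_0>0$ such that every molecule $m_{pq}$ with $f(m_{pq})>1-\varepsilon_0$ satisfies $d(p,q)\ge\varepsilon_0$. The bookkeeping device is the defect $\rho(p,q):=d(p,q)-f(p)+f(q)=d(p,q)\big(1-f(m_{pq})\big)\ge 0$, which is additive along metric segments, $\rho(p,w)+\rho(w,q)=\rho(p,q)+\big(d(p,w)+d(w,q)-d(p,q)\big)$; in particular $\rho(p,w)+\rho(w,q)<\rho(p,q)+\delta$ whenever $w\in\seg{p}{q}{\delta}$, and $m_{pq}$ sits deep in the slice exactly when $\rho(p,q)$ is small relative to $d(p,q)$.

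First I would fix a molecule $m_{uv}$ with $f(m_{uv})$ so close to $1$ that $\rho(u,v)<\varepsilon_0^2$ \emph{in absolute terms}, and $d(u,v)\ge\varepsilon_0$. If the supremum $\|f\|=1$ is approached only along molecules of large length, a single high-value molecule need not have small absolute defect, so I would first pass to a sub-pair: choosing points along an almost-geodesic from $u$ to $v$ spaced at distance about $\varepsilon_0$ and using the additivity of $\rho$, a pigeonhole argument produces consecutive points $(u',v')$ with $d(u',v')\ge\varepsilon_0$ and $\rho(u',v')$ at most a $1/k$-fraction of $\rho(u,v)$, where $k\approx d(u,v)/\varepsilon_0$; this makes the absolute defect small while keeping the length bounded below. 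After relabelling I extract a gap in the metric segment: for small $\delta$ and $p\in\seg{u}{v}{\delta}$ additivity gives $f(m_{up})>1-\tfrac{\delta+\rho(u,v)}{d(u,p)}$, so if $d(u,p)>\tfrac{\delta+\rho(u,v)}{\varepsilon_0}=:r$ then $f(m_{up})>1-\varepsilon_0$ and hence $d(u,p)\ge\varepsilon_0$; thus no point of the segment has $d(u,p)\in(r,\varepsilon_0)$, and $r<\varepsilon_0$ because $\rho(u,v)<\varepsilon_0^2$. To upgrade this one-sided gap to the two-ball containment that Lemma~\ref{lemma_dent} needs, I would replace $v$ by a point $w\in\seg{u}{v}{2\delta}$ almost minimizing $d(u,w)$ subject to $d(u,w)\ge\varepsilon_0$: every $q\in\seg{u}{w}{\delta}$ then either satisfies $d(u,q)<r$, or has $d(u,q)\ge\varepsilon_0$, in which case minimality of $d(u,w)$ together with the segment inequality forces $q$ into a small ball about $w$. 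This yields $\seg{u}{w}{\delta}\subseteq B(u,r)\cup B(w,s)$ with $r+s<\varepsilon_0\le d(u,w)$.

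Then I would apply Lemma~\ref{lemma_dent} to $(u,w)$, obtaining $x\in B(u,r)$ and $y\in B(w,s)$ with $m_{xy}\in\dent(B_{\mathcal{F}(M)})$ and $d(u,x)+d(w,y)+d(x,y)<d(u,w)+\delta$. To see $m_{xy}\in S(f,\alpha)$ I would estimate the output defect: the same additivity gives $\rho(x,y)\le\rho(u,w)+\delta$, while $d(x,y)\ge d(u,w)-r-s\ge\varepsilon_0-r-s$ is bounded below; since $\rho(u,w)\le\rho(u,v)+\delta$ is small, $f(m_{xy})=1-\rho(x,y)/d(x,y)>1-\alpha$ once $\delta$ and the initial defect are chosen small enough. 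Hence $m_{xy}$ is a denting point of $B_{\mathcal{F}(M)}$ lying in $S(f,\alpha)$.

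The main obstacle is the construction of the working pair in the second paragraph. Non-locality bounds the length of high-value molecules only from below, so when $\|f\|$ is attained only ``at infinity'' (as can happen for non-proper complete $M$) no single high-value molecule need have small \emph{absolute} defect, and the gap estimate genuinely requires $\rho(u,v)<\varepsilon_0^2$. The subdivision/pigeonhole step on the additive defect is designed to overcome this, but it presupposes enough interior points on the segment to subdivide into pieces of length about $\varepsilon_0$. The delicate borderline is therefore the case in which the thickened metric segment carries no interior points: there I would argue directly that $m_{uv}$ is already a denting point, so that taking $f(m_{uv})>1-\alpha$ finishes at once, and verify that this dichotomy---subdivide, or else the molecule is itself denting---exhausts all cases. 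Matching the constants $r,s,\delta$ across the gap estimate, the replacement of $v$ by $w$, and the final slice estimate is the remaining bookkeeping.
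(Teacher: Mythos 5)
Your completion reduction, your use of Lemma~\ref{lemma_dent}, and your closing estimate are all fine \emph{conditionally}: they work once you have a molecule $m_{uv}$ with $d(u,v)\ge\varepsilon_0$ whose absolute defect $\rho(u,v)=d(u,v)\bigl(1-f(m_{uv})\bigr)$ is small compared with $\alpha\varepsilon_0$ (note that $\rho<\varepsilon_0^2$ is not enough: since $\alpha$ is arbitrary, your bound $1-f(m_{xy})<\bigl(\rho(u,v)+3\delta\bigr)/(\varepsilon_0-r'-s)$ forces you to make $\rho(u,v)$ arbitrarily small). That is exactly where the genuine gap lies: such molecules need not exist at all, so neither the chain subdivision nor your fallback can be repaired. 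Concretely, let $M=\{0\}\cup\bigcup_{n\ge 5}\{a_n,c_n,b_n\}$ with $d(a_n,c_n)=d(c_n,b_n)=n/2$, $d(a_n,b_n)=n$, $d(0,p)=n$ for $p$ in the $n$-th branch, and $d(p,q)=d(p,0)+d(0,q)$ for points in different branches; put $f(0)=f(c_n)=0$, $f(a_n)=(n-1)/2$, $f(b_n)=-(n-1)/2$. Then $f\in S_{\Lip_0(M)}$ and $f$ is not local (every molecule of value $>3/4$ lies inside one branch and has length $\ge 5/2$), but \emph{every} molecule of $M$ has $\rho\ge 1/2$: inside a branch the defects are exactly $1/2$ or $1$, and every other molecule has value at most $1/2$ and length at least $5/2$, hence defect at least $5/4$. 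So your final estimate can never certify membership in $S(f,\alpha)$ once $\alpha$ is small. Your two devices also fail here: the segment $\seg{a_n}{b_n}{\delta}$ equals $\{a_n,c_n,b_n\}$, so there is no chain of mesh about $\varepsilon_0$ to subdivide along (pigeonhole over the two available pieces only brings the defect down to $1/2$), and the fallback dichotomy does not apply since the segment does have an interior point while $m_{a_nb_n}=\tfrac12 m_{a_nc_n}+\tfrac12 m_{c_nb_n}$ is not even extreme, let alone denting. (The proposition itself of course holds in this example: $m_{a_nc_n}$ is a denting point of value $1-1/n$.)

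The paper's proof avoids this by making the estimates \emph{relative to the scale of the current molecule} and by running your dichotomy recursively rather than once. Starting from $m_{u_0v_0}\in S(f,\alpha/2)$, either $\seg{u_0}{v_0}{\delta d(u_0,v_0)}\subseteq B\bigl(u_0,d(u_0,v_0)/4\bigr)\cup B\bigl(v_0,d(u_0,v_0)/4\bigr)$, in which case Lemma~\ref{lemma_dent} yields a denting molecule $m_{xy}$ with $d(x,y)\ge d(u_0,v_0)/2$ and with loss $d(u_0,x)+d(v_0,y)$ proportional to $d(u_0,v_0)$, so the slice estimate closes multiplicatively with no absolute defect needed; or there is a point $p$ of the segment outside both quarter-balls --- note that the splitting point is not postulated but is exactly the negation of the containment, so no chain is ever required --- and then one of $m_{u_0p}$, $m_{pv_0}$ keeps the relative value up to a factor $(1+\delta)$ while its length drops by the factor $3/4+\delta$. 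Iterating, the lengths decay geometrically while the value stays above roughly $1-\alpha/2$, so non-locality rules out the second alternative after at most $n$ steps and the first must occur. In the example above this recursion splits once at $c_n$ and terminates; your linear plan (subdivide to small absolute defect, then gap, then lemma) cannot be reorganized to do this without becoming, in effect, the paper's argument.
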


\begin{proof}
We may assume that $M$ is complete, since for its completion $M'$ we have $\mathcal{F}(M)=\mathcal{F}(M')$.

Since $f$ is not local, there exists $\varepsilon>0$ such that $d(u,v)>\varepsilon$ for every $m_{uv}\in S(f,\varepsilon)$.
Fix $\alpha>0$. We may assume that $\alpha<\varepsilon$.

Let $m_{u_0v_0}\in S(f,\alpha/2)$. Let $n\in \mathbb{N}$ and $\delta>0$ be such that 
$$\Big(\frac{3}{4}+\delta\Big)^nd(u_0,v_0)<\varepsilon$$
and  $f(u_0)-f(v_0)>(1-\alpha/2+\delta)(1+\delta)^{n}d(u_0,v_0)$.

If
\[\seg{u_0}{v_0}{\delta d(u_0,v_0)}\subseteq  B\big(u_0,d(u_0,v_0)/4\big)\cup B\big(v_0,d(u_0,v_0)/4\big),\]
then by Lemma \ref{lemma_dent} there exist $x\in B\big(u_0,d(u_0,v_0)/4\big)$ and $y\in B\big(v_0,d(u_0,v_0)/4\big)$ such that $m_{xy}$ is a denting point of $B_{\mathcal{F}(M)}$ and 
\[d(u_0,x)+d(v_0,y)+d(x,y)<d(u_0,v_0)+\delta d(u_0,v_0).\]
Then $d(x,y)\ge d(u_0,v_0)/2$ and therefore
\begin{align*}
    f(x)-f(y)&=f(x)-f(u_0)+f(v_0)-f(y)+f(u_0)-f(v_0)\\
    &> -d(x,u_0)-d(v_0,y)+(1-\alpha/2+\delta)d(u_0,v_0)\\
    &>d(x,y)-\alpha/2 d(u_0,v_0)\\
    &\ge (1-\alpha)d(x,y).
\end{align*}
Now we have found a denting point $m_{xy}$ of $B_{\mathcal{F}(M)}$ in the slice $S(f,\alpha)$.

Otherwise there exists 
$$p\in \seg{u_0}{v_0}{\delta d(u_0,v_0)}\setminus\Big( B\big(u_0,d(u_0,v_0)/4\big)\cup B\big(v_0,d(u_0,v_0)/4\big)\Big).$$
Therefore
\begin{align*}
    f(u_0)-f(p)+f(p)-f(v_0)&>(1-\alpha/2+\delta)(1+\delta)^{n}d(u_0,v_0)\\
    &>(1-\alpha/2+\delta)(1+\delta)^{n-1}\big(d(u_0,p)+d(v_0,p)\big).
\end{align*}
Then either $$f(u_0)-f(p)>(1-\alpha/2+\delta)(1+\delta)^{n-1}d(u_0,p)$$
or 
$$f(p)-f(v_0)>(1-\alpha/2+\delta)(1+\delta)^{n-1}d(v_0,p).$$
Additionally we have
$$d(u_0,p)<(1+\delta)d(u_0,v_0)-d(v_0,p)< \Big(\frac{3}{4}+\delta\Big)d(u_0,v_0).$$
Analogously  $d(v_0,p)< \big(3/4+\delta\big)d(u_0,v_0).$
Therefore there exist $u_1, v_1\in M$ such that
$$f(u_1)-f(v_1)>(1-\alpha/2+\delta)(1+\delta)^{n-1}d(u_1,v_1)$$
and $d(u_1,v_1)<(3/4+\delta)d(u_0,v_0)$. 

Now we will repeat this step as many times as needed, but no more than $n$ times. Assume for  $k\in \{1,\ldots,n-1\}$ that
$$f(u_k)-f(v_k)>(1-\alpha/2+\delta)(1+\delta)^{n-k}d(u_k,v_k)$$
and $d(u_k,v_k)<(3/4+\delta)^kd(u_0,v_0)$. 

If
\[\seg{u_k}{v_k}{\delta d(u_k,v_k)}\subseteq  B\big(u_k,d(u_k,v_k)/4\big)\cup B\big(v_k,d(u_k,v_k)/4\big),\]
then we can find a denting point $m_{xy}$ of $B_{\mathcal{F}(M)}$ in the slice $S(f,\alpha)$ as we did before.

Otherwise, we can find $u_{k+1}, v_{k+1}\in M$ such that
$$f(u_{k+1})-f(v_{k+1})>(1-\alpha/2+\delta)(1+\delta)^{n-k-1}d(u_{k+1},v_{k+1})$$
and $d(u_{k+1},v_{k+1})<(3/4+\delta)^{k+1}d(u_0,v_0)$.

By the $n$-th step we must have found a denting point $m_{xy}$ of $B_{\mathcal{F}(M)}$ in the slice $S(f,\alpha)$, because otherwise we would have
$f(u_n)-f(v_n)>(1-\alpha)d(u_n,v_n)$ and 
$$d(u_n,v_n)<\Big(\frac{3}{4}+\delta\Big)^n d(u_0,v_0)<\varepsilon,$$
which is a contradiction. Therefore the slice $S(f,\alpha)$ contains a denting point of $B_{\mathcal{F}(M)}$.
\end{proof}

\section{Delta-points in Lipschitz-free spaces over proper metric spaces}

The study of Daugavet- and $\Delta$-points in Lipschitz-free spaces was started in \cite{JRZ} and subsequently continued in \cite{Vee}. First it was proved that a molecule  $m_{xy}$ is a $\Delta$-point if and only if for every $\varepsilon>0$ and every slice $S$ with $m_{xy}\in S$ there exist $u, v\in M$ with $u\neq v$ such that $m_{uv}\in S$ and $d(u,v)<\varepsilon$ (see {\cite[Theorem~4.7]{JRZ}}). That result was later generalized for convex combinations of molecules (see {\cite[Theorem~4.4]{Vee}}). With the help of Proposition \ref{general} this result can be generalized to all unit sphere elements in Lipschitz-free spaces over proper metric spaces. First let us note that by Proposition \ref{general} and Remark \ref{remark_conv} we get the following.

\begin{prop}\label{lip_free_prop}
Let $\mu \in S_{\mathcal{F}(M)}$ be a $\Delta$-point. Then for every $\varepsilon>0$ and every slice $S$ of $B_{\mathcal{F}(M)}$ with $\mu\in S$ there exists a sequence $(m_{u_iv_i})$ in $S$ such that 
\[\|m_{u_iv_i}-m_{u_jv_j}\|\ge 2-\varepsilon\]
for all $i,j\in \N$ with $i\neq j$.
\end{prop}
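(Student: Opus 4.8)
The plan is to deduce Proposition~\ref{lip_free_prop} directly from Proposition~\ref{general} together with Remark~\ref{remark_conv}, specializing to the Banach space $X=\mathcal{F}(M)$ and the $\Delta$-point $\mu$. The only new content in the statement, compared with Proposition~\ref{general}, is twofold: first, we want the elements $x_i$ to be \emph{molecules} rather than arbitrary points of the slice; and second, the statement discards the dual sequence $(x_i^*)$ and the factor involving $\alpha$, keeping only the estimate on the distances between the primal points. So the proof should be short and amount to an application with the right auxiliary set.

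First I would fix $\varepsilon>0$ and a slice $S$ of $B_{\mathcal{F}(M)}$ containing $\mu$. Such a slice has the form $S=S(f,\alpha)$ for some $f\in S_{\Lip_0(M)}=S_{\mathcal{F}(M)^*}$ and $\alpha>0$, with $\mu\in S(f,\alpha)$. By shrinking $\alpha$ if necessary I may assume $\alpha\le\varepsilon$, so that the conclusion $\|m_{u_iv_i}-m_{u_jv_j}\|\ge 2-\alpha$ will imply the desired $\|m_{u_iv_i}-m_{u_jv_j}\|\ge 2-\varepsilon$. Now I would invoke Proposition~\ref{general} with $x_0=\mu$ and $x_0^*=f$: it produces a sequence $(x_i)$ in $S(f,\alpha)$ with $\|x_i-x_j\|\ge 2-\alpha$ for all $i\neq j$.

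The key step is to arrange that the $x_i$ are molecules. For this I would appeal to Remark~\ref{remark_conv} with $A=\mathcal{M}(M)$, the set of all molecules. The hypothesis of the remark requires that every subslice of $S(f,\alpha)$ contain an element of $A$; this holds because $\clconv(\mathcal{M}(M))=B_{\mathcal{F}(M)}$ forces every slice of $B_{\mathcal{F}(M)}$ (in particular every subslice of $S(f,\alpha)$) to meet $\mathcal{M}(M)$, as already noted at the end of Remark~\ref{remark_conv}. Hence the remark lets me choose each $x_i=m_{u_iv_i}$ to be a molecule while retaining the separation estimate. Relabeling, I obtain the sequence $(m_{u_iv_i})$ in $S$ with $\|m_{u_iv_i}-m_{u_jv_j}\|\ge 2-\alpha\ge 2-\varepsilon$ for $i\neq j$, which is exactly the claim.

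The main obstacle, such as it is, is purely bookkeeping: verifying that the hypothesis of Remark~\ref{remark_conv} is genuinely satisfied by $\mathcal{M}(M)$ (i.e.\ that molecules are dense enough to appear in every subslice, which is the standard fact $\clconv(\mathcal{M}(M))=B_{\mathcal{F}(M)}$), and making sure the index set matches—Proposition~\ref{general} indexes by $\N\cup\{0\}$ including the base point $x_0=\mu$, whereas Proposition~\ref{lip_free_prop} only asserts a sequence indexed by $\N$. Since $\mu$ itself need not be a molecule, I would simply drop the $i=0$ term and keep the molecules $x_1,x_2,\dots$, whose pairwise distances already satisfy the required bound. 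No further estimate is needed, so the proof is essentially a one-line reduction once the remark is applied.
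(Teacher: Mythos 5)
Your overall route is exactly the paper's: apply Proposition~\ref{general} to the $\Delta$-point $\mu$, use Remark~\ref{remark_conv} with $A=\mathcal{M}(M)$ (legitimate, since $\clconv(\mathcal{M}(M))=B_{\mathcal{F}(M)}$ implies every subslice of the given slice meets $\mathcal{M}(M)$), and discard the dual sequence together with the index $i=0$. However, there is one genuine gap: the reduction ``by shrinking $\alpha$ if necessary I may assume $\alpha\le\varepsilon$'' is not a harmless normalization if you keep the same defining functional $f$. Membership $\mu\in S(f,\alpha)$ only means $f(\mu)>1-\alpha$; if, say, $f(\mu)=1-\alpha/2$ and $\varepsilon<\alpha/2$, then $\mu\notin S(f,\varepsilon)$, so Proposition~\ref{general} cannot be applied to the shrunken slice $S(f,\varepsilon)$ with $x_0=\mu$. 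This matters because Proposition~\ref{general} only yields pairwise separation $2-\alpha$, i.e.\ two minus the width of the slice it is fed: applied to the original slice it gives $2-\alpha$, which is weaker than the required $2-\varepsilon$ whenever $\varepsilon<\alpha$.

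The fix is the same device the paper uses in the proof of Corollary~\ref{quasi_dent}: by {\cite[Lemma~2.1]{IK}}, for any $\beta\in\big(0,\min\{\alpha,\varepsilon\}\big)$ there exists $g\in S_{\Lip_0(M)}$ such that $\mu\in S(g,\beta)$ and $S(g,\beta)\subseteq S(f,\alpha)$. Applying Proposition~\ref{general} and Remark~\ref{remark_conv} to the slice $S(g,\beta)$, whose width is at most $\varepsilon$, produces molecules $(m_{u_iv_i})$ in $S(g,\beta)\subseteq S$ with $\|m_{u_iv_i}-m_{u_jv_j}\|\ge 2-\beta\ge 2-\varepsilon$ for all $i\neq j$, which is the desired conclusion. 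With this one correction (replacing the naive shrinking by the Ivakhno--Kadets subslice lemma), your proof coincides with the paper's intended derivation.
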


\begin{prop}\label{delta_proper}
Assume that $M$ is proper and let $\mu \in S_{\mathcal{F}(M)}$. Then $\mu$ is a $\Delta$-point if and only if for every $\varepsilon>0$ and every slice $S$ of $B_{\mathcal{F}(M)}$ with $\mu\in S$ there exist $u, v\in M$ with $u\neq v$ such that $m_{uv}\in S$ and $d(u,v)<\varepsilon$.
\end{prop}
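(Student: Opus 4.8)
The statement is a biconditional. The forward direction---if $\mu$ is a $\Delta$-point, then every slice containing $\mu$ contains molecules supported on arbitrarily close pairs of points---is where the new machinery pays off, while the converse is the routine direction.

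Let me think about the forward direction. Suppose $\mu$ is a $\Delta$-point. Fix a slice $S = S(f,\alpha)$ with $\mu \in S$ and fix $\varepsilon > 0$. I want to produce $u \neq v$ with $m_{uv} \in S$ and $d(u,v) < \varepsilon$. The natural strategy is contrapositive combined with Proposition \ref{local_dent}: if NO such pair exists, i.e. every molecule $m_{uv}$ in $S$ has $d(u,v) \geq \varepsilon$, then $f$ behaves "locally like" a non-local functional on this slice, and I should be able to extract a denting point, contradicting the $\Delta$-point property via Corollary \ref{quasi_dent} (a denting point has slices of arbitrarily small Kuratowski measure, but a $\Delta$-point forces measure $2$ on every slice containing it). The subtlety is that the denting point produced by Proposition \ref{local_dent} lives in a slice of $f$, not necessarily in a slice containing $\mu$, so I cannot directly invoke Corollary \ref{quasi_dent} on $\mu$. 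So the actual argument should run through Proposition \ref{general}/Proposition \ref{lip_free_prop} instead.

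The cleaner route uses properness. Since $M$ is proper (closed balls are compact), I would apply Proposition \ref{lip_free_prop} to get a sequence $(m_{u_iv_i})$ in $S$ with pairwise distances $\geq 2-\varepsilon$. If the converse condition fails, there is $\varepsilon_0 > 0$ such that $d(u_i,v_i) \geq \varepsilon_0$ for all $i$ in some slice, and the $\Delta$-point machinery still forces infinitely many molecules that are pairwise far apart. The key geometric fact about molecules is that $\|m_{uv} - m_{u'v'}\|$ being close to $2$ with both denominators bounded below forces, via compactness of the relevant balls, a convergent subsequence of the points $u_i, v_i$; the limiting molecule (or limiting denting configuration) then contradicts the "far apart" condition. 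Concretely, properness lets me assume (after passing to subsequences) that $u_i \to u$ and $v_i \to v$ lie in a fixed compact ball, but $\|m_{u_iv_i} - m_{u_jv_j}\| \to 0$ along such a subsequence, contradicting pairwise distance $\geq 2-\varepsilon$. Hence the molecules cannot all stay uniformly separated in denominator, producing the desired short molecule in $S$.

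For the converse, suppose the stated condition holds: every slice $S$ containing $\mu$ contains a molecule $m_{uv}$ with $d(u,v)$ arbitrarily small. I must show $\mu$ is a $\Delta$-point, i.e. for every such slice $S(f,\alpha)$ and every $\varepsilon > 0$ there is $\nu \in S$ with $\|\mu - \nu\| \geq 2 - \varepsilon$. Given a short molecule $m_{uv} \in S$ with $d(u,v)$ tiny, the standard Lipschitz-free estimate shows $\|\mu - m_{uv}\|$ is close to $2$: a peaking function argument (take a Lipschitz function peaking against $\mu$ near norm one and exploiting that $m_{uv}$ concentrates near a single point) gives $\|\mu + m_{vu}\|$ small, equivalently $\|\mu - m_{uv}\|$ near $2$. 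This is essentially the molecule-level argument from {\cite[Theorem~4.7]{JRZ}} applied with $\mu$ in place of a molecule, and it goes through verbatim since it only uses that $\mu \in S$ and that $d(u,v) < \varepsilon$.

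The main obstacle is the forward direction's compactness step: making precise how properness converts the abstract sequence from Proposition \ref{lip_free_prop} into a genuinely short molecule in $S$, rather than merely a pair of far-apart molecules whose denominators could in principle grow. I expect this is where the hypothesis of properness is essential and where the bulk of the careful estimation lies.
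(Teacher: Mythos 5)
There is a genuine gap, and it sits exactly where you flagged it: the compactness step in the forward direction. Properness means that closed \emph{bounded} sets are compact, but the sequence $(m_{u_iv_i})$ that Proposition \ref{lip_free_prop} produces inside $S(f,\alpha)$ need not have bounded supports, so you cannot ``pass to subsequences with $u_i\to u$, $v_i\to v$.'' Concretely, take $M=\mathbb{R}$ (proper, unbounded), $f(x)=x$: the molecules $m_{n+1,n}$ all lie in $S(f,\alpha)$, are pairwise at distance exactly $2$, and their supports escape to infinity; no subsequence of the supports converges, and indeed nothing forces the sequence given by Proposition \ref{lip_free_prop} to avoid this configuration. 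Your proposal offers no mechanism to rule this out, and without one the argument stalls precisely at the step you called ``the bulk of the careful estimation.'' The paper's resolution is a construction you are missing: approximate $\mu$ by a finite convex combination $\nu=\sum_i\lambda_i m_{x_iy_i}$, take a norming functional $g$ for $\nu$, and build the \emph{bounded} auxiliary function $h(p)=\max\bigl\{\min_i g(y_i),\,\max_i\bigl(g(x_i)-d(x_i,p)\bigr)\bigr\}+a$, which still satisfies $\|h\|\le 1$ and $h(\nu)=1$. One then applies Proposition \ref{lip_free_prop} not to $S(f,\alpha)$ but to the slice defined by $(f+h)/\|f+h\|$, which contains $\mu$ and is contained in $S(f,\alpha)\cap S(h,\alpha)$. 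Because $h$ is bounded and eventually constant far from the points $x_i$, every molecule in $S(h,\alpha)$ has its support in a fixed bounded set; only then does properness yield convergent subsequences, which must converge to a common point (the molecules have no convergent subsequence), producing the short molecule in $S(f,\alpha)$.

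Two smaller remarks. First, your opening idea of running the contrapositive through Proposition \ref{local_dent} and denting points is abandoned in your own text; that route is essentially what the paper uses for the \emph{non-proper} variant (Proposition \ref{delta_3}), not here, so dropping it was the right instinct. Second, in the converse direction your justification contains a false claim: it is not true that $d(u,v)<\varepsilon$ alone forces $\|\mu-m_{uv}\|$ close to $2$ (take $\mu=m_{uv}$ itself with $d(u,v)$ small). The correct statement needs the hypothesis for \emph{every} slice containing $\mu$ and a more careful perturbation argument; the paper sidesteps this by citing {\cite[Theorem~2.6]{JRZ}} outright, which is also the clean way for you to handle that direction.
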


\begin{proof}
We only need to prove the "only if" part, the "if" part is a direct consequence of {\cite[Theorem~2.6]{JRZ}}. Assume that $\mu$ is a $\Delta$-point. Fix $\varepsilon>0$ and a slice $S(f,\alpha)$ of $B_{\mathcal{F}(M)}$ with $\mu\in S(f,\alpha)$. Choose $\gamma>0$ such that $\mu\in S(f,\alpha-\gamma)$, and let $\nu\in\conv\mathcal{M}(M)\cap S_{\mathcal{F}(M)}$ be such that $\|\mu-\nu\|<\gamma$. Let $n\in \mathbb{N}$, $\lambda_1,\ldots,\lambda_n>0$ with $\sum^{n}_{i=1}\lambda_i=1$, and $m_{x_1 y_1},\ldots,m_{x_n y_n}\in S_{\mathcal{F}(M)}$ be such that  $\nu=\sum^{n}_{i=1}\lambda_i m_{x_iy_i}$. Choose $g\in S_{\Lip_0(M)}$ such that $g(\nu)=1$ and define function $h\colon M\rightarrow \mathbb{R}$ by
\[h(p)=\max\Big\{\min_{i\in\{1,\ldots,n\}}g(y_i),\max_{i\in\{1,\ldots,n\}}\big(g(x_i)-d(x_i,p)\big)\Big\}+a\]
where $a\in \mathbb{R}$ is such that $h(0)=0$. From {\cite[Proposition~1.32]{Weaver}} we get $\|h\|\le1$. For every $i\in \{1,\ldots,n\}$ we have
\[h(x_i)\ge g(x_i)+a\]
and for some $j\in \{1,\ldots,n\}$ we have
\begin{align*}
    h(y_i)&= \max\Big\{\min_{k\in\{1,\ldots,n\}}g(y_k),g(x_j)-d(x_j,y_i)\Big\}+a\\
    &\le \max\big\{g(y_i),g(y_i)\big\}+a\\
    &=g(y_i)+a.
\end{align*}
Therefore $h(\nu)=1$ and 
\[f(\mu)+h(\mu)>1-\alpha+\gamma +g(\nu)-\gamma=2-\alpha.\]
By Proposition \ref{lip_free_prop} there exists a sequence
\[(m_{u_iv_i})\subseteq S\Big(\frac{f+h}{\|f+h\|},1-\frac{2-\alpha}{\|f+h\|}\Big)\]
such that $\|m_{u_iv_i}-m_{u_jv_j}\|\ge 1$ for all $i,j\in \N$ with $i\neq j$. Then $(m_{u_iv_i})\subseteq S(f,\alpha)$ and $(m_{u_iv_i})\subseteq S(h,\alpha)$. Note that
\[\min_{i\in\{1,\ldots,n\}}g(y_i)+a\le h(p)\le \max_{i\in\{1,\ldots,n\}}g(y_i)+a,\]
which means that the set $\big\{u,v\in M\colon m_{uv}\in S(h,\alpha)\big\}$ is bounded. Therefore there exist convergent subsequences of $(u_i)$ and $(v_i)$, which must converge to the same element, since the sequence $(m_{u_iv_i})$ does not have any convergent subsequences. This means that there exist $i\in\N$ such that $d(u_i,v_i)<\varepsilon$.
\end{proof}

Similarly to Proposition \ref{delta_proper} we can prove the following result.

\begin{prop}\label{delta_3}
Assume that $M$ is such complete metric space that for every $\varepsilon>0$ and every bounded subset $M_0\subseteq M$ the set
\[\big\{m_{uv}\in\dent(B_{\mathcal{F}(M)})\colon u,v\in M_0, d(u,v)>\varepsilon\big\}\]
is finite. Let $\mu\in S_{\mathcal{F}(M)}$. Then $\mu$ is a $\Delta$-point if and only if for every $\varepsilon>0$ and every slice $S$ of $B_{\mathcal{F}(M)}$ with $\mu\in S$ there exist $u, v\in M$ with $u\neq v$ such that $m_{uv}\in S$ and $d(u,v)<\varepsilon$.
\end{prop}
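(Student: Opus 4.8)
The plan is to mirror the proof of Proposition \ref{delta_proper}, replacing the compactness-based argument (which came from properness of $M$) with the finiteness hypothesis on denting molecules. As before, the ``if'' direction follows immediately from {\cite[Theorem~2.6]{JRZ}}, so I would only treat the ``only if'' direction. Assume $\mu$ is a $\Delta$-point, fix $\varepsilon>0$ and a slice $S(f,\alpha)$ with $\mu\in S(f,\alpha)$, and reproduce the opening steps verbatim: pick $\gamma>0$ with $\mu\in S(f,\alpha-\gamma)$, approximate $\mu$ within $\gamma$ by a convex combination $\nu=\sum_{i=1}^n\lambda_i m_{x_iy_i}\in S_{\mathcal{F}(M)}$, choose $g\in S_{\Lip_0(M)}$ with $g(\nu)=1$, and build the same inf-convolution-type function $h$ so that $\|h\|\le 1$, $h(\nu)=1$, and $f(\mu)+h(\mu)>2-\alpha$. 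As in the previous proof, the set $\{(u,v)\in M\colon m_{uv}\in S(h,\alpha)\}$ is bounded because $h$ takes values only in an interval of length $\max_i g(y_i)-\min_i g(y_i)$.

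The key modification is in how I extract the sequence of molecules. Here I would invoke Remark \ref{remark_conv}: since $M$ is complete, every slice of $B_{\mathcal{F}(M)}$ contains a \emph{denting} molecule by Proposition \ref{local_dent} (applied to the normalized defining functional, which cannot be local if its slice contains a denting point — I would need to check this reduction, see below). Thus the set $A=\dent(B_{\mathcal{F}(M)})\cap\mathcal{M}(M)$ meets every subslice of $S\big((f+h)/\|f+h\|,\,1-(2-\alpha)/\|f+h\|\big)$, so by Proposition \ref{lip_free_prop} together with Remark \ref{remark_conv} I obtain a sequence $(m_{u_iv_i})$ of \emph{denting} molecules lying in that slice, hence in both $S(f,\alpha)$ and $S(h,\alpha)$, and satisfying $\|m_{u_iv_i}-m_{u_jv_j}\|\ge 1$ for $i\neq j$.

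Now the finiteness hypothesis does the work previously done by properness. The molecules $m_{u_iv_i}$ are denting and all lie in $S(h,\alpha)$, so the pairs $(u_i,v_i)$ lie in a fixed bounded set $M_0$. If we had $d(u_i,v_i)\ge\varepsilon$ for every $i$, then $\{m_{u_iv_i}\colon i\in\N\}$ would be an infinite subset of $\{m_{uv}\in\dent(B_{\mathcal{F}(M)})\colon u,v\in M_0,\ d(u,v)>\varepsilon/2\}$, contradicting finiteness (one must argue that infinitely many of these molecules are distinct, which follows from $\|m_{u_iv_i}-m_{u_jv_j}\|\ge 1>0$). Hence some $i$ has $d(u_i,v_i)<\varepsilon$, and since $m_{u_iv_i}\in S(f,\alpha)=S$, this is exactly the desired molecule.

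The main obstacle I anticipate is the reduction justifying that $A$ meets every subslice: Proposition \ref{local_dent} produces a denting molecule in $S(f',\alpha')$ only when the defining functional $f'$ is \emph{not local}. So I must confirm that each relevant subslice has a non-local defining functional, or more robustly, argue directly that a subslice whose functional is local cannot support the $\Delta$-point structure — alternatively, I would handle the local case separately, noting that if the functional is local then the slice already contains molecules $m_{uv}$ with $d(u,v)$ arbitrarily small by definition of locality, which immediately gives the conclusion. This case split is the delicate point; everything else is a routine transcription of the proof of Proposition \ref{delta_proper} with the finiteness condition substituted for the relatively-compact-sequence argument.
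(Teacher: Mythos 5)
Your proposal is correct and follows essentially the same route as the paper: the ``if'' direction via {\cite[Theorem~2.6]{JRZ}}, the transcription of the proof of Proposition~\ref{delta_proper} to get a bounded, $1$-separated sequence of molecules in the slice, Remark~\ref{remark_conv} together with Proposition~\ref{local_dent} and {\cite[Corollary~3.44]{Weaver}} to upgrade them to denting molecules, and the finiteness hypothesis in place of compactness. The ``delicate point'' you flag is resolved exactly as the paper does it: split into the case where some subslice $S(g,\beta)\subseteq S(f,\alpha)$ has local defining functional (then locality immediately yields $m_{uv}\in S(g,\beta)\subseteq S(f,\alpha)$ with $d(u,v)<\varepsilon$), and the case where no subslice does, in which Proposition~\ref{local_dent} applies to every subslice.
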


\begin{proof}
We only need to prove the "only if" part, the "if" part is a direct consequence of {\cite[Theorem~2.6]{JRZ}}. Assume that $\mu$ is a $\Delta$-point. Fix $\varepsilon>0$ and a slice $S(f,\alpha)$ of $B_{\mathcal{F}(M)}$ with $\mu\in S(f,\alpha)$. If there exist a subslice $S(g,\beta)\subseteq S(f,\alpha)$ such that $g$ is local, then there exist $m_{uv}\in S(g,\beta)\subseteq S(f,\alpha)$ such that $d(u,v)<\varepsilon$.

Now assume that for every subslice $S(g,\beta)\subseteq S(f,\alpha)$ the functional $g$ is not local.  Then analogously to the proof of Proposition \ref{delta_proper}, we can find bounded sequences $(u_i)$ and $(v_i)$ such that  $(m_{u_iv_i})\subseteq S(f,\alpha)$ and $\|m_{u_iv_i}-m_{u_jv_j}\|\ge 1$ for all $i,j\in \N$ with $i\neq j$. Furthermore, by Proposition \ref{local_dent} every subslice of $S(f,\alpha)$ contains a denting point of $B_{\mathcal{F}(M)}$ and by {\cite[Corollary~3.44]{Weaver}} that denting point is a molecule. Therefore by Remark \ref{remark_conv} we may assume  $(m_{u_iv_i})\subseteq  \dent (B_{\mathcal{F}(M)})$. This means that there exist $i\in\N$ such that $d(u_i,v_i)<\varepsilon$, since the set \[\big\{m_{uv}\in\dent(B_{\mathcal{F}(M)})\colon u,v\in \{u_j,v_j\colon j\in \N\}, d(u,v)\ge\varepsilon\big\}\]
is finite.
\end{proof}

Note that Proposition \ref{delta_3} also applies to metric spaces that are not proper, for example, to the metric space from  {\cite[Example~3.1]{Vee}}.

\section{Equivalence of Daugavet- and Delta-points in spaces of Lipschitz functions over proper metric spaces}
It has been shown that any local function $f\in S_{\Lip_0(M)}$ is a Daugavet-point (see {\cite[Theorem~3.6]{JRZ}}, {\cite[Theorem~1.4]{HOP}}). First we show that in spaces of Lipschitz functions over compact metric spaces only local functions are Daugavet-points. By Proposition \ref{general_weak} and Remark \ref{remark_conv} we get the following result.

\begin{prop}\label{prop_delta}
Let $f\in S_{\Lip_0(M)}$ be a $w^*$-$\Delta$-point. Then for every $\varepsilon>0$ there exists a sequence $(m_{u_iv_i})$ in $S(f,\varepsilon)$ such that 
\[\|m_{u_iv_i}-m_{u_jv_j}\|\ge 2-\varepsilon\]
for all $i,j\in \N$ with $i\neq j$.
\end{prop}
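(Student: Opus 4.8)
The plan is to apply Proposition \ref{general_weak} directly, with the underlying Banach space taken to be $X = \mathcal{F}(M)$, so that $X^* = \Lip_0(M)$ and the given $w^*$-$\Delta$-point $f$ plays the role of $x_0^*$. The key observation is that the sequence we must produce consists of \emph{molecules}, which live in the predual $\mathcal{F}(M) = X$; hence it is the sequence $(x_i)$ in $S_X$ furnished by Proposition \ref{general_weak}, rather than the dual sequence $(x_i^*)$ in $S_{X^*}$, that we should aim to capture.

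First I would fix $\varepsilon > 0$ and select a $w^*$-slice carrying $f$. Since $\|f\| = 1$, there is $x_0 \in S_{\mathcal{F}(M)}$ with $f(x_0) > 1 - \varepsilon$, so that $f \in S(x_0, \varepsilon)$, which is a $w^*$-slice because its defining element $x_0$ belongs to $S_{\mathcal{F}(M)}$. Applying Proposition \ref{general_weak} with $\alpha = \varepsilon$ to this $w^*$-slice then yields a sequence $(x_i)$ in $S_{\mathcal{F}(M)}$ with $x_i \in S(f, 1/i)$ for every $i$ and $\|x_i - x_j\| \ge 2 - \varepsilon$ whenever $i \neq j$.

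Next I would upgrade the $x_i$ to molecules. By the part of Remark \ref{remark_conv} that transfers to Proposition \ref{general_weak}, if a set $A \subseteq B_{\mathcal{F}(M)}$ meets every subslice of $S(f, \varepsilon')$ for some $\varepsilon' > 0$, then the sequence $(x_i)$ may be chosen from $A$. Taking $A = \mathcal{M}(M)$ works precisely because $\clconv(\mathcal{M}(M)) = B_{\mathcal{F}(M)}$ forces every slice of $B_{\mathcal{F}(M)}$, and in particular every subslice of $S(f, \varepsilon)$, to contain a molecule. Thus we may assume $x_i = m_{u_iv_i}$ for each $i$.

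Finally, since $m_{u_iv_i} \in S(f, 1/i)$ and $S(f, 1/i) \subseteq S(f, \varepsilon)$ as soon as $1/i \le \varepsilon$, I would pass to the tail $i \ge \lceil 1/\varepsilon \rceil$ and reindex; this tail still has all pairwise distances at least $2 - \varepsilon$ and now lies entirely in $S(f, \varepsilon)$, which is exactly the claimed sequence. I do not anticipate a genuine obstacle, since the statement is essentially a specialization of Proposition \ref{general_weak}; the only points requiring care are matching the base-space and dual-space roles correctly (extracting $(x_i)$, not $(x_i^*)$) and handling the shrinking slices $S(f, 1/i)$ through the tail argument so that the whole sequence sits in the single fixed slice $S(f, \varepsilon)$.
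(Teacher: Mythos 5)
Your proof is correct and takes essentially the same route as the paper, which derives this proposition directly from Proposition~\ref{general_weak} combined with Remark~\ref{remark_conv} applied to $A=\mathcal{M}(M)$ (using that every slice of $B_{\mathcal{F}(M)}$ contains a molecule). Your only additions are to make explicit the choice of the $w^*$-slice $S(x_0,\varepsilon)$ containing $f$ and the passage from the shrinking slices $S(f,1/i)$ to the fixed slice $S(f,\varepsilon)$ via a tail-and-reindex step, both of which the paper leaves implicit.
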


\begin{prop}\label{compact_delta}
Assume that $M$ is compact and let $f\in S_{\Lip_0(M)}$ be a $w^*$-$\Delta$-point. Then $f$ is local.
\end{prop}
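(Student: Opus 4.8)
The plan is to feed the separated sequence of molecules produced by Proposition~\ref{prop_delta} into the compactness of $M$ and extract a pair of nearby points lying in a prescribed slice of $f$. Since $\|f\|=1$, in order to prove that $f$ is local it suffices to show that for each $\varepsilon\in(0,1)$ there exist $u\neq v$ in $M$ with $f(m_{uv})>1-\varepsilon$ and $d(u,v)<\varepsilon$; the remaining (larger) values of $\varepsilon$ then follow automatically from the small ones.

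First I would fix $\varepsilon\in(0,1)$ and apply Proposition~\ref{prop_delta} to obtain a sequence $(m_{u_iv_i})$ in the slice $S(f,\varepsilon)$ with $\|m_{u_iv_i}-m_{u_jv_j}\|\ge 2-\varepsilon$ whenever $i\neq j$. Because $M$ is compact, after passing to a subsequence I may assume that $u_i\to u$ and $v_i\to v$ for some $u,v\in M$. Recall that $x\mapsto\delta_x$ is an isometric embedding of $M$ into $\mathcal{F}(M)$, so $\|\delta_{u_i}-\delta_u\|=d(u_i,u)\to 0$ and likewise $\delta_{v_i}\to\delta_v$ in norm.

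The key step is to rule out the possibility $u\neq v$. If $u\neq v$, then $d(u_i,v_i)\to d(u,v)>0$, and combining this with $\delta_{u_i}\to\delta_u$ and $\delta_{v_i}\to\delta_v$ yields
\[
m_{u_iv_i}=\frac{\delta_{u_i}-\delta_{v_i}}{d(u_i,v_i)}\longrightarrow\frac{\delta_u-\delta_v}{d(u,v)}=m_{uv}
\]
in $\mathcal{F}(M)$. Thus $(m_{u_iv_i})$ would be a norm-convergent sequence, contradicting that its terms are pairwise at distance at least $2-\varepsilon>0$. Hence $u=v$, which forces $d(u_i,v_i)\to 0$.

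Finally, for $i$ large enough we have $d(u_i,v_i)<\varepsilon$, while $m_{u_iv_i}\in S(f,\varepsilon)$ means precisely $f(m_{u_iv_i})>1-\varepsilon=\|f\|-\varepsilon$. This exhibits the desired witnesses $u_i\neq v_i$, so $f$ is local. The only genuinely delicate point is the convergence of the molecules in the case $u\neq v$; everything else is a direct application of compactness together with the isometry $x\mapsto\delta_x$. (I note that this argument runs parallel to the one in the proof of Proposition~\ref{delta_proper}, except that there boundedness of the relevant point set had to be extracted from a slice, whereas here compactness of $M$ supplies the convergent subsequences directly.)
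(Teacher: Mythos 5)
Your proof is correct and follows essentially the same route as the paper: apply Proposition~\ref{prop_delta}, use compactness to extract convergent subsequences of $(u_i)$ and $(v_i)$, and rule out distinct limits because the molecules would then converge in norm, contradicting their pairwise separation. The only difference is that you spell out the norm convergence $m_{u_iv_i}\to m_{uv}$ via the isometry $x\mapsto\delta_x$, which the paper leaves implicit.
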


\begin{proof}
Fix $\varepsilon>0$. We may assume $\varepsilon<2$. By Proposition \ref{prop_delta} there exists a sequence $(m_{u_iv_i})\subseteq S(f,\varepsilon)$  such that 
\[\|m_{u_iv_i}-m_{u_jv_j}\|\ge 2-\varepsilon\]
for all $i,j\in \N$ with $i\neq j$. Since $M$ is compact, by moving to a subsequence we may assume the sequences $(u_i)$ and $(v_i)$ are convergent. Let $u,v\in M$ be such that $u_i\rightarrow u$  and $v_i\rightarrow v$. If $u\neq v$, then $m_{u_iv_i}\rightarrow m_{uv}$, which is clearly a contradiction, since the sequence $(m_{u_iv_i})$ does not converge. Therefore $u=v$ and there exist $i\in \N$ such that $d(u_i,v_i)<\varepsilon$. By assumption we also have $f(m_{u_iv_i})>1-\varepsilon$ and therefore $f$ is local.
\end{proof}
Now we see that in the case of compact metric spaces, Proposition \ref{prop_delta} offers us a characterization. Our next step is to generalize that to proper metric spaces. To do so, we shall first present a more general version of {\cite[Proposition~4.2]{HOP}}. The proof of this result is somewhat similar to the proof of {\cite[Theorem~3.3]{HOP}}.

\begin{prop}\label{something}
Let $\varepsilon>0$ and let $(m_{u_iv_i})$ be a sequence in $\mathcal{F}(M)$ such that there exists a sequence $(A_i)$ of pairwise disjoint subsets of $M$ such that $u_i\in A_i$ and
\[d(u_i,x)+d(v_i,y)\ge (1-\varepsilon)\big(d(u_i,v_i)+d(x,y)\big)\]
for all $i\in \N$ and $x,y\in M\setminus A_i$. Then $\limsup \|F+ m_{u_iv_i}\|\ge 2-2\varepsilon$ for every $F\in S_{\Lip_0(M)^*}$.
\end{prop}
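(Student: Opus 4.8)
Since $\mathcal{F}(M)^*=\Lip_0(M)$ and $m_{u_iv_i}\in\mathcal{F}(M)\subseteq\Lip_0(M)^*$, for every $F\in S_{\Lip_0(M)^*}$ we have
\[
\|F+m_{u_iv_i}\|=\sup_{h\in B_{\Lip_0(M)}}\Big(F(h)+\frac{h(u_i)-h(v_i)}{d(u_i,v_i)}\Big).
\]
So the plan is: fix $\delta>0$, choose $f\in B_{\Lip_0(M)}$ with $F(f)>1-\delta$, and then produce for each $i$ a test function $g_i\in B_{\Lip_0(M)}$ that sees the molecule $m_{u_iv_i}$ almost fully, i.e.\ $\frac{g_i(u_i)-g_i(v_i)}{d(u_i,v_i)}\ge 1-2\varepsilon$, while keeping $F(g_i)$ close to $F(f)$ along a subsequence. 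Plugging such $g_i$ into the formula and taking $\limsup$ over $i$ would give $\limsup_i\|F+m_{u_iv_i}\|\ge(1-\delta)+(1-2\varepsilon)$, and letting $\delta\to0$ finishes the proof.

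\textbf{The peak functions.} For each $i$ set $c_i=f(v_i)+(1-2\varepsilon)d(u_i,v_i)$ and $\phi_i(p)=c_i-d(u_i,p)$, and define $g_i=\max\{f,\phi_i\}$. As a maximum of two $1$-Lipschitz functions, $g_i$ is $1$-Lipschitz, so (after the harmless normalization at $0$) $g_i\in B_{\Lip_0(M)}$. The peak is inactive at $v_i$, since $\phi_i(v_i)=f(v_i)-2\varepsilon d(u_i,v_i)<f(v_i)$, hence $g_i(v_i)=f(v_i)$; and if $c_i\le f(u_i)$ then already $m_{u_iv_i}(f)\ge 1-2\varepsilon$ and we are done for that $i$, so we may assume the peak is active at $u_i$, giving $g_i(u_i)=c_i$. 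Thus $g_i(u_i)-g_i(v_i)=(1-2\varepsilon)d(u_i,v_i)$ as required. Write $k_i:=g_i-f=(\phi_i-f)^+\ge 0$, a nonnegative perturbation concentrated on the excess set $E_i:=\{\phi_i>f\}$.

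\textbf{The telescoping/limsup step.} Consider $\Phi_n=\max\{f,\max_{1\le i\le n}\phi_i\}$, which is again $1$-Lipschitz, so $\|\Phi_n\|\le1$ and therefore $F(\Phi_n)\in[-1,1]$ for every $n$. The key point is that, \emph{when the excess sets $E_i$ are pairwise disjoint}, one has $\Phi_n=f+\sum_{i=1}^n k_i$, whence
\[
F(f)+\sum_{i=1}^n F(k_i)=F(\Phi_n)\in[-1,1].
\]
Thus the partial sums of $\sum_i F(k_i)$ stay bounded; a real series with bounded partial sums cannot have $\limsup$ of its terms strictly negative, so $\limsup_i F(k_i)\ge 0$ and hence $\limsup_i F(g_i)=F(f)+\limsup_i F(k_i)\ge F(f)>1-\delta$. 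Combining with $F(g_i)+m_{u_iv_i}(g_i)\le\|F+m_{u_iv_i}\|$ and $\limsup(a_i+b_i)\ge\limsup a_i+\liminf b_i$ (with $b_i=m_{u_iv_i}(g_i)\ge1-2\varepsilon$) yields $\limsup_i\|F+m_{u_iv_i}\|\ge(1-\delta)+(1-2\varepsilon)$, and $\delta\to0$ gives the claim.

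\textbf{Main obstacle.} The whole argument hinges on the localization claim that the excess sets $E_i=\{\phi_i>f\}$ are (essentially) contained in the pairwise disjoint sets $A_i$, so that the identity $\Phi_n=f+\sum_i k_i$ and the clean splitting of $F$ hold. This is precisely where the geometric hypothesis must be used: taking $y=v_i$ (when $v_i\notin A_i$) in
\[
d(u_i,x)+d(v_i,y)\ge(1-\varepsilon)\big(d(u_i,v_i)+d(x,y)\big)
\]
forces every $x\notin A_i$ to satisfy $d(u_i,x)\ge(1-\varepsilon)\big(d(u_i,v_i)+d(x,v_i)\big)$, so that $A_i$ contains a ball around $u_i$ of radius comparable to $(1-\varepsilon)d(u_i,v_i)$ and the $u_i$- and $v_i$-neighborhoods are separated; this is what keeps the peak of height $(1-2\varepsilon)d(u_i,v_i)$ from protruding past $A_i$. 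The delicate part, which I expect to require the most care and to follow the pattern of {\cite[Theorem~3.3]{HOP}}, is controlling the directions in which $f$ descends from $u_i$ at full rate (where a pure distance peak tends to stick out), together with the separate bookkeeping needed when $v_i\in A_i$; handling these so that the disjointness of the excess sets genuinely holds (possibly after capping the peaks or passing to a subsequence) is the crux of the proof.
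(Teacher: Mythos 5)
Your skeleton is sound, and your telescoping/bounded-partial-sums trick is a genuinely nice elementary substitute for what the paper actually does (the paper represents $F$ via de Leeuw's transform by a measure $\mu$ on $\widetilde{M}=(M\times M)\setminus\{(x,x)\}$ and uses disjointness of the $A_i$ to force $|\mu|(\Gamma_{1,A_i}\cup\Gamma_{2,A_i})\to 0$). However, the step you defer as the ``main obstacle'' is not a fillable technicality: it is the entire content of the proposition, and for the specific construction you propose it is \emph{false}. Take $M=\mathbb{R}$ with the usual metric, base point $0$, $u=0$, $v=1$, and $A=(-a,1)$ with $a=2(1-\varepsilon)/\varepsilon$; a short case check shows $d(u,x)+d(v,y)\ge(1-\varepsilon)\big(d(u,v)+d(x,y)\big)$ for all $x,y\notin A$, so the hypothesis holds with $u\in A$, $v\notin A$. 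Now take $f(t)=1-|t-1|$, which is $1$-Lipschitz with $f(0)=0$. Your peak is $\phi(p)=f(v)+(1-2\varepsilon)d(u,v)-d(u,p)=2-2\varepsilon-|p|$, and for every $p\le 0$ one gets $\phi(p)-f(p)=2-2\varepsilon>0$: the excess set contains the whole ray $(-\infty,0]$, far outside $A$, and the protrusion has the fixed size $2-2\varepsilon$, so neither capping the peak nor passing to a subsequence can repair it. (Your own inequality shows why: the hypothesis with $y=v_i$ gives $d(u_i,p)\ge(1-\varepsilon)\big(d(u_i,v_i)+d(v_i,p)\big)$, which yields the needed bound $f(v_i)-f(p)\le d(u_i,p)-(1-2\varepsilon)d(u_i,v_i)$ only when $d(v_i,p)\le d(u_i,v_i)$.)

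The paper's proof resolves exactly this point by \emph{damping the background function}: the test function equals $(1-\varepsilon)f$ on $M\setminus A_i$ (not $f$), and the peak inside $A_i$ is built by McShane-type extension, e.g.\ when $v_i\notin A_i$ by setting $g(u_i)=(1-\varepsilon)\big(f(v_i)+d(u_i,v_i)\big)$. The factor $1-\varepsilon$ is precisely the slack the hypothesis provides: for $x\notin A_i$,
\[
|g(u_i)-g(x)|\le(1-\varepsilon)\big(d(u_i,v_i)+d(x,v_i)\big)\le d(u_i,x),
\]
so $g$ is $1$-Lipschitz, the perturbation $g-(1-\varepsilon)f$ is supported in $A_i$ by construction, and $g(m_{u_iv_i})=1-\varepsilon$; one then trades the target $2-2\varepsilon$ as $(1-\varepsilon)+(1-\varepsilon)$ rather than your $(1-\delta)+(1-2\varepsilon)$. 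In your max formulation the same fix works: with $\phi_i(p)=(1-\varepsilon)\big(f(v_i)+d(u_i,v_i)\big)-d(u_i,p)$ one does get $\{\phi_i>(1-\varepsilon)f\}\subseteq A_i$, after which your telescoping argument goes through and could indeed replace de Leeuw's transform. Finally, note that the case $v_i\in A_i$ (where the substitution $y=v_i$ is not permitted) requires a genuinely different, two-sided construction --- the paper defines $g(u_i)$ by an inf-extension from $M\setminus A_i$ and fills the rest of $A_i$ by a sup-extension, then uses the hypothesis to get $g(u_i)-g(v_i)\ge(1-\varepsilon)d(u_i,v_i)$ --- and this case is not covered by your sketch either.
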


\begin{proof}
Fix  $F\in S_{\Lip_0(M)^*}$. 
By de Leeuw’s transform, there exists $\mu\in ba(\widetilde{M})$ with $|\mu|(\widetilde{M})=1$ where
\[\widetilde{M}=(M\times M)\setminus \{(x,x)\colon x\in M\}\]
such that
\[F(f)=\int_{\widetilde{M}} \tilde{f}d\mu\quad \text{where }\quad\tilde{f}(x,y)=\frac{f(x)-f(y)}{d(x,y)}\]
for every $f\in\Lip_0(M)$ (see, e.g., \cite{Weaver}). Following the notation used in \cite{HOP}, let
\[\Gamma_{1,A}=\{(x,y)\in \widetilde{M}\colon x\in A\}\quad\text{ and }\quad\Gamma_{2,A}=\{(x,y)\in \widetilde{M}\colon y\in A\}.\] Choose $\gamma>0$ arbitrarily small and $f\in S(F,\gamma)$. There exists $n\in \N$ such that $|\mu|(\Gamma_{1,A_i})<\gamma$ and $|\mu|(\Gamma_{2,A_i})<\gamma$ for every $i\ge n$, since the sets $A_1,A_2,\ldots$ are pairwise disjoint. 

Fix $i\ge n$. We will show that $\|F+ m_{u_iv_i}\|\ge 2-2\varepsilon-5\gamma$ and since $\gamma$ is arbitrarily small, this gives us $\limsup \|F+ m_{u_jv_j}\|\ge 2-2\varepsilon.$
Consider two cases.

\textbf{Case 1.} Assume $v_i\notin A_i$. Let us
define a function $g$ on $(M\setminus A_i)\cup \{u_i\}$ in such way that $g|_{M\setminus A_i}=(1-\varepsilon)f|_{M\setminus A_i}$ and 
\[g(u_i)=g(v_i)+(1-\varepsilon)d(u_i,v_i).\]
Note that for any $x\in M\setminus A_i$ we have
\begin{align*}
    \big|g(u_i)-g(x)\big|&= \big|(1-\varepsilon)f(v_i)+(1-\varepsilon)d(u_i,v_i)-(1-\varepsilon)f(x)\big|\\
    &\le (1-\varepsilon)\big(d(u_i,v_i)+|f(x)-f(v_i)|\big)\\
    &\le (1-\varepsilon)\big(d(u_i,v_i)+d(x,v_i)\big)\\
    &\le d(x,u_i).
\end{align*}
Therefore the Lipschitz constant of $g$ is at most 1. Let us extend $g$ by McShane--Whitney Theorem to $M$. Then
\begin{equation}\label{4_3_eq}
    F(g)=(1-\varepsilon)\int_{\widetilde{M}\setminus (\Gamma_{1,A_i}\cup \Gamma_{2,A_i})}\tilde{f}d\mu+\int_{ \Gamma_{1,A_i}\cup \Gamma_{2,A_i}}\tilde{g}d\mu>(1-\varepsilon)(1-3\gamma)-2\gamma>1-\varepsilon-5\gamma
\end{equation}
and 
\[g(m_{u_iv_i})=1-\varepsilon,\]
meaning $\|F+ m_{u_iv_i}\|\ge 2-2\varepsilon-5\gamma$.

\textbf{Case 2.} Assume $v_i\in A_i$. This case is very similar to the proof of {\cite[Theorem~3.3]{HOP}}. 
Let us
define a function $g\colon M\rightarrow\mathbb{R}$ in such way that $g|_{M\setminus A_i}=(1-\varepsilon)f|_{M\setminus A_i}$, 
\[g(u_i)=\inf_{x\in M\setminus A_i}\big(g(x)+d(x,u_i)\big),\] 
and 
\[g(y)=\sup_{x\in (M\setminus A_i)\cup\{u_i\}}\big(g(x)-d(x,y)\big)\]
for every $y\in A_i\setminus\{u_i\}$. Then $\|g\|\le 1$ and either $g(v_i)=g(u_i)-d(u_i,v_i)$ or
\begin{align*}
    g(u_i)-g(v_i)&=\inf_{x,y\in M\setminus A_i}\big(g(x)+d(x,u_i)-g(y)+d(y,v_i)\big)\\
    &\ge\inf_{x,y\in M\setminus A_i}\big(d(x,u_i)+d(y,v_i)-(1-\varepsilon)d(x,y)\big)\\
    &\ge (1-\varepsilon)d(u_i,v_i),
\end{align*}
giving us $g(m_{u_iv_i})\ge1-\varepsilon$. Furthermore, for $g$ the inequalities \eqref{4_3_eq} hold, and we get $F(g)>1-\varepsilon-5\gamma$. Hence $\|F+ m_{u_iv_i}\|\ge 2-2\varepsilon-5\gamma$. 
\end{proof}

Our main interest in Proposition \ref{something} was to show the following.
\begin{prop}\label{unbounded_prop}
Let a sequence $(m_{u_iv_i})$ in $\mathcal{F}(M)$ be such that at least one of the sequences $(u_i)$ and  $(v_i)$ is unbounded. Then $\limsup \|F+m_{u_iv_i}\|=2$ for every $F\in S_{\Lip_0(M)^*}$.
\end{prop}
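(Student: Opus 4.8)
The plan is to reduce to Proposition~\ref{something} wherever possible and to handle directly the single configuration it cannot reach. Write $a_i:=d(u_i,v_i)$. Since $\|F+m_{u_iv_i}\|\le\|F\|+\|m_{u_iv_i}\|=2$, only the lower bound $\limsup\|F+m_{u_iv_i}\|\ge 2$ is at issue; and because the $\limsup$ along a subsequence never exceeds the $\limsup$ along the whole sequence, it suffices to produce, for each fixed $\varepsilon>0$, some subsequence along which $\limsup\|F+m_{u_iv_i}\|\ge 2-2\varepsilon$, and then let $\varepsilon\to0$. First I would use symmetry to assume that $(u_i)$ is the unbounded sequence: as $m_{u_iv_i}=-m_{v_iu_i}$ and $\|F+m_{u_iv_i}\|=\|(-F)+m_{v_iu_i}\|$ with $-F\in S_{\Lip_0(M)^*}$, the two endpoints play interchangeable roles. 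Passing to a subsequence I may then assume $d(u_i,0)\to\infty$, and passing once more that either $(a_i)$ is bounded or $a_i\to\infty$.

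If $(a_i)$ is bounded, say $a_i\le D$, I would verify the hypotheses of Proposition~\ref{something} with balls. Fix $\varepsilon\in(0,2/3)$ and set $A_i=B(u_i,r)$ with $r=\tfrac{2(1-\varepsilon)}{\varepsilon}D+1$; then $v_i\in A_i$ since $a_i\le D<r$. For $x,y\in M\setminus A_i$ one has $d(u_i,x)\ge r$, so, estimating $d(x,y)\le d(u_i,x)+a_i+d(v_i,y)$, the desired inequality $d(u_i,x)+d(v_i,y)\ge(1-\varepsilon)\bigl(a_i+d(x,y)\bigr)$ reduces to $\varepsilon\,d(u_i,x)\ge 2(1-\varepsilon)a_i$, which holds because $\varepsilon r\ge 2(1-\varepsilon)D$. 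As $d(u_i,0)\to\infty$ and $r$ is fixed, a further subsequence makes the $A_i$ pairwise disjoint, and Proposition~\ref{something} yields $\limsup\|F+m_{u_iv_i}\|\ge 2-2\varepsilon$. Letting $\varepsilon\to0$ finishes this case.

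The remaining case $a_i\to\infty$ is the main obstacle, and Proposition~\ref{something} is genuinely unavailable here: a short computation (take $y=v_i$ and let $x$ range outside $A_i$) shows that any admissible $A_i$ must contain the ball $B\bigl(u_i,\tfrac{2(1-\varepsilon)}{2-\varepsilon}a_i\bigr)$, whose radius is comparable to $a_i$, so when the $u_i$ do not separate faster than $a_i$ grows (e.g.\ $u_i=i$, $v_i=0$ on the line) no choice of pairwise disjoint $A_i$ exists. Instead I would argue directly through de~Leeuw's transform, as in the proof of Proposition~\ref{something}: write $F(f)=\int_{\widetilde M}\tilde f\,d\mu$ with $|\mu|(\widetilde M)=1$, and for $\gamma>0$ pick $f\in S(F,\gamma)$. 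Splitting $\mu=\mu^+-\mu^-$, the estimate $\int(1-\tilde f)\,d\mu^++\int(1+\tilde f)\,d\mu^-=1-F(f)<\gamma$ shows that $\tilde f$ is within $\gamma$, in $L^1(|\mu|)$, of the sign of $\mu$. The idea is then to build a norm-one $g_i$ that coincides with $f$ on a region carrying essentially all of $|\mu|$ while rising with slope close to $1$ from $v_i$ toward $u_i$ along the complementary region; since $a_i\to\infty$, such a ramp can realize $g_i(m_{u_iv_i})\ge 1-\varepsilon$ while the defect it forces on the molecule value is only $O(1/a_i)$.

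The technical heart—and what I expect to absorb most of the work—is the simultaneous control $F(g_i)\to1$ and $g_i(m_{u_iv_i})\to1$. Because de~Leeuw's $\mu$ is only finitely additive, part of its mass may sit ``at infinity'' (this already happens on the proper space $\mathbb{R}$), so one cannot simply confine the modification $\{g_i\neq f\}$ to a set of small $|\mu|$-measure. The mass lying in a fixed large ball is handled by letting $g_i$ agree with $f$ there; for the mass at infinity, which for each fixed $i$ lies beyond $u_i$, one must instead match the radial slope of the ramp (a cone centred at $u_i$) to the sign that $\tilde f$ is forced to take there, choosing to peak at $u_i$ or to pass through it according to that sign. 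Reconciling these two regimes in a single norm-one $g_i$, via explicit sup/inf (McShane--Whitney) formulas and the orientation dictated by $\mu$, is the crux; granting it, $\|F+m_{u_iv_i}\|\ge F(g_i)+g_i(m_{u_iv_i})\ge 2-\varepsilon-\gamma-o(1)$, and letting $i\to\infty$ and then $\gamma,\varepsilon\to0$ gives $\limsup\|F+m_{u_iv_i}\|=2$.
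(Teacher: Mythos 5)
Your reduction to a lower bound, the symmetry argument, and the bounded-gap case (balls $A_i=B(u_i,r)$ of fixed radius, disjoint along a subsequence since $d(u_i,0)\to\infty$, then Proposition~\ref{something}) are all correct. The genuine gap is that the case $d(u_i,v_i)\to\infty$ --- which you yourself call the main obstacle --- is never actually proved: you outline a de~Leeuw-transform construction and then explicitly defer its key step (``reconciling these two regimes in a single norm-one $g_i$ \ldots is the crux; granting it\ldots''). Controlling the finitely additive mass at infinity is exactly the hard part of any such direct argument (it is what the disjointness hypothesis in Proposition~\ref{something} is engineered to handle), so leaving it as a black box means the proposition remains unproved in its principal case.

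Moreover, the claim you use to justify abandoning Proposition~\ref{something} there is mistaken. Your computation shows only that, \emph{if} $v_i\notin A_i$, then $A_i$ must contain the ball $B\bigl(u_i,\tfrac{2(1-\varepsilon)}{2-\varepsilon}d(u_i,v_i)\bigr)$; this in no way rules out pairwise disjoint admissible sets \emph{along a subsequence}, which is all a $\limsup$ statement requires (and you pass to subsequences freely in your bounded case). Concretely, for your own counterexample candidate $u_i=i$, $v_i=0$ in $\mathbb{R}$: choose $k_{n+1}>32n(n+1)k_n$ and set $a_n=k_n/8$, $A_n=B(0,4k_nn)\setminus B\bigl(0,k_n/(8n)\bigr)$. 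These annuli are pairwise disjoint, $u_{k_n}\in A_n$, $v_{k_n}=0\notin A_n$, and the inequality with $\varepsilon=1/n$ holds for all $x,y\notin A_n$ --- this is precisely Lemma~\ref{lemma_unbounded} with $a=a_n$ (after swapping $x$ and $y$, which is harmless since they range over the same set). This is in fact how the paper proves Proposition~\ref{unbounded_prop} in \emph{all} cases at once: it never uses balls centered at $u_i$, but annuli centered at the base point, chosen along a rapidly increasing subsequence via Lemma~\ref{lemma_unbounded}, together with the observation that $A_i$ need only contain one of the two endpoints of the molecule (replacing $F$ by $-F$ and $m_{u_iv_i}$ by $m_{v_iu_i}$ if needed, since the hypothesis of Proposition~\ref{something} is symmetric in the endpoints). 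So the route you declared ``genuinely unavailable'' is exactly the paper's route; had you looked for annuli rather than balls around $u_i$, your first case's argument would have extended to cover everything, and no de~Leeuw machinery beyond Proposition~\ref{something} would be needed.
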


In order to prove this, we shall first prove the following lemma.
\begin{lem}\label{lemma_unbounded}
Let $\varepsilon>0$. Then
\[d(u,x)+d(v,y)\ge (1-\varepsilon)\big(d(u,v)+d(x,y)\big)\]
for all $a>0$, $u\in B(0,8a)$, $v\in B(0,8a)\setminus B(0,4a)$, and $x,y\in \big(M\setminus B(0,32a/\varepsilon)\big)\cup B(0,a\varepsilon)$.
\end{lem}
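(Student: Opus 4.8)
The goal is a purely metric inequality with no functionals or molecules involved, so I would prove it by direct case analysis on the triangle-type inequalities among the distances $d(u,x)$, $d(v,y)$, $d(u,v)$, $d(x,y)$, exploiting the hypothesis that $x,y$ lie either very far from the origin (outside $B(0,32a/\varepsilon)$) or very close to it (inside $B(0,a\varepsilon)$), while $u,v$ are of intermediate size (in $B(0,8a)$, with $v$ bounded below by $4a$). The key quantitative facts I would extract first are: $u,v$ have norm at most $8a$, so $d(u,v)\le 16a$; and each of $x,y$ is separated from the ball $B(0,8a)$ by a definite gap when it is far out, or sits near $0$ when it is close in. The plan is to split into the four combinations according to whether each of $x$ and $y$ is "far" or "near."

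\emph{First}, I would handle the case where both $x$ and $y$ are far, i.e.\ in $M\setminus B(0,32a/\varepsilon)$. Here the dominant contribution comes from $d(u,x)$ and $d(v,y)$ themselves: since $u,v\in B(0,8a)$ while $x,y$ have norm exceeding $32a/\varepsilon\ge 32a$, the reverse triangle inequality gives $d(u,x)\ge \|x\|-8a$ and $d(v,y)\ge\|y\|-8a$, whereas $d(x,y)\le\|x\|+\|y\|$ and $d(u,v)\le16a$. Comparing $d(u,x)+d(v,y)$ against $(1-\varepsilon)(d(u,v)+d(x,y))$ then reduces, after absorbing the $\le 16a$ and $\le 8a$ terms, to checking that the $\varepsilon\|x\|+\varepsilon\|y\|$ slack dominates the lower-order $a$-terms; this is exactly where the threshold $32a/\varepsilon$ is calibrated, since $\varepsilon\|x\|\ge 32a$ comfortably beats the error terms of order $a$.

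\emph{Second}, for the mixed and both-near cases, the relevant observation is that when $x$ (or $y$) lies in $B(0,a\varepsilon)$ it is essentially at the origin on the scale $a$, so that $d(u,x)\approx d(u,0)$ and such a point contributes negligibly to $d(x,y)$ as well. The lower bound $v\in B(0,8a)\setminus B(0,4a)$ guarantees $d(v,y)$ or $d(u,v)$ stays bounded below by a multiple of $a$, preventing the inequality from degenerating. In each subcase I would again bound the left side below using reverse triangle inequalities and the right side above using ordinary triangle inequalities, then verify the numerical inequality using the explicit constants $4a$, $8a$, $a\varepsilon$, and $32a/\varepsilon$.

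\emph{The main obstacle} is the bookkeeping in the mixed "one far, one near" case, where the two summands on the left behave on very different scales and one must be careful that the small term (coming from the near point) is not needed to meet the bound, i.e.\ that the far term alone, together with the intermediate term from $v$, already suffices. I expect that once the constants are tracked carefully this is routine, and the whole lemma follows by taking the worst of the finitely many cases; the structure is clearly designed so that Proposition~\ref{something} can later be applied with $A_i$ chosen as an annulus of the form $B(0,8a)\setminus B(0,4a)$ around each $u_i$, which is why the hypotheses on $u,v,x,y$ take precisely this form.
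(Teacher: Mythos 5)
Your proposal is correct and follows essentially the same route as the paper: a case analysis on whether $x$ and $y$ are far (outside $B(0,32a/\varepsilon)$) or near (inside $B(0,a\varepsilon)$), with every estimate obtained from triangle inequalities through the base point and the explicit constants; each of your four cases does close numerically. The differences are only organizational. The paper needs just two cases: when at least one of $x,y$ is far it runs exactly your far--far computation,
\[
d(u,x)+d(v,y)\ \ge\ d(x,0)+d(y,0)-16a\ \ge\ (1-\varepsilon)d(x,y)+\varepsilon\big(d(x,0)+d(y,0)\big)-16a,
\]
and observes that $\varepsilon\big(d(x,0)+d(y,0)\big)>32a$ already when only \emph{one} of the two points is far, so your two mixed cases are absorbed into the far--far one. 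When both are near, the paper argues a bit differently from you: from $d(v,0)>4a$, $d(0,y)\le a\varepsilon$ and $d(x,y)\le 2a\varepsilon$ it gets $\varepsilon\, d(v,y)\ge 2(1-\varepsilon)d(x,y)$ and combines this with the four-point inequality $d(u,x)+d(v,y)\ge d(u,v)-d(x,y)$; you instead bound $d(u,v)\le d(u,0)+d(v,0)$ and $d(u,x)\ge d(u,0)-a\varepsilon$, which reduces the claim to $d(u,0)+d(v,0)\ge 4a-2a\varepsilon$, again guaranteed by $d(v,0)>4a$ --- both computations are fine. Two small points to fix in a write-up: $M$ is only a metric space, so write $d(x,0)$ rather than $\|x\|$; and your closing remark misidentifies how the lemma feeds into Proposition~\ref{something}: the sets $A_i$ there must contain the \emph{large} annulus $B(0,32a/\varepsilon)\setminus B(0,a\varepsilon)$, so that $M\setminus A_i$ lies inside $\big(M\setminus B(0,32a/\varepsilon)\big)\cup B(0,a\varepsilon)$, which is the region where the lemma controls $x,y$; taking $A_i=B(0,8a)\setminus B(0,4a)$ would not suffice. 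This is precisely how the lemma is used in the proof of Proposition~\ref{unbounded_prop}.
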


\begin{proof}
Fix $a>0$, $u\in B(0,8a)$, and $v\in B(0,8a)\setminus B(0,4a)$.
If  $x,y\in B\big(0,a\varepsilon\big)$, then 
\[d(v,y)\ge d(v,0)-d(0,y)>4a-a\varepsilon\ge 4a(1-\varepsilon)\frac{d(x,y)}{2a\varepsilon}=2\frac{1-\varepsilon}{\varepsilon}d(x,y)\]
and we get
\begin{align*}
    d(u,x)+d(v,y)&\ge (1-\varepsilon)\big(d(u,v)-d(x,y)\big)+\varepsilon \big(d(u,x)+d(v,y)\big)\\
    &\ge(1-\varepsilon)\big(d(u,v)-d(x,y)\big)+2(1-\varepsilon)d(x,y)\\
    &= (1-\varepsilon)\big(d(u,v)+d(x,y)\big).
\end{align*}
Furthermore,
\begin{align*}
    d(u,x)+d(v,y)&\ge d(x,0)-d(0,u)+d(y,0)-d(0,v)\\
    &\ge(1-\varepsilon)d(x,y)+\varepsilon\big(d(x,0)+d(y,0)\big)-d(0,u)-d(0,v)\\
    &>(1-\varepsilon)d(x,y)+ 32a-d(0,u)-d(0,v) \\
    &\ge (1-\varepsilon)\big(d(u,v)+d(x,y)\big)
\end{align*}
when $x\in M\setminus B(0,32a/\varepsilon)$ or $y\in M\setminus B(0,32a/\varepsilon)$.
\end{proof}

\begin{proof}[Proof of Proposition \ref{unbounded_prop}]
We will find a sequence $(k_i)\subseteq\N$ such that there exists a sequence $(A_i)$ of pairwise disjoint subsets of $M$ such that $u_{k_i}\in A_i$ or $v_{k_i}\in A_i$ and
\[d(u_{k_i},x)+d(v_{k_i},y)\ge (1-1/i)\big(d(u_{k_i},v_{k_i})+d(x,y)\big)\]
for all $i\in \N$ and $x,y\in M\setminus A_i$. Then by Proposition \ref{something} we get $\limsup \|F+m_{u_{k_i}v_{k_i}}\|=2$ and therefore $\limsup \|F+m_{u_iv_i}\|=2$ for every $F\in S_{\Lip_0(M)^*}$.

Let $k_1=1$ and choose $a_1>0$ such that $u_{k_1},v_{k_1}\in B(0,8a_1)$ and either $u_{k_1}\notin B(0,4a_1)$ or $v_{k_1}\notin B(0,4a_1)$. Let $A_1=B(0,32a_1)\setminus B(0,a_1)$. By Lemma \ref{lemma_unbounded} we have
\[d(u_{k_1},x)+d(v_{k_1},y)\ge (1-1/i)\big(d(u_{k_1},v_{k_1})+d(x,y)\big)\]
for all $x,y\in M\setminus A_1$.

Assume that we have found $k_1,\ldots,k_{n-1}\in\N$, and  pairwise disjoint bounded sets $A_1,\ldots,A_{n-1}$ such that $u_{k_i}\in A_i$ or $v_{k_i}\in A_i$ and
\[d(u_{k_i},x)+d(v_{k_i},y)\ge (1-1/i)\big(d(u_{k_i},v_{k_i})+d(x,y)\big)\]
for all $i\in \{1,\ldots,n-1\}$ and $x,y\in M\setminus A_i$. Choose $a_n>0$ such that there exists $i\in\N$ such that $u_i,v_i\in B(0,8a_n)$ with either $u_i\notin B(0,4a_n)$ or $v_i\notin B(0,4a_n)$, and $\cup_{j=1}^{n-1}A_j\subseteq B(0,a_n/n)$. Let $k_n=i$ and $A_n=B(0,32a_nn)\setminus B(0,a_n/n)$. Clearly, $A_n\cap A_j=\emptyset$ for every $j\in\{1,\ldots,n-1\}$ and by Lemma \ref{lemma_unbounded} we have
\[d(u_{k_n},x)+d(v_{k_n},y)\ge (1-1/n)\big(d(u_{k_n},v_{k_n})+d(x,y)\big)\]
for every $x,y\in M\setminus A_n$.

\end{proof}

By combining these results we get the following sufficient condition for Daugavet-points.
\begin{prop}\label{proper_delta}
Let $f\in S_{Lip_0(M)}$ be such that for every $\varepsilon>0$ there exists a sequence $(m_{u_iv_i})$ in $S(f,\varepsilon)$ and a sequence $(A_i)$ of pairwise disjoint subsets of $M$ such that $u_i\in A_i$ and
\[d(u_i,x)+d(v_i,y)\ge (1-\varepsilon)\big(d(u_i,v_i)+d(x,y)\big)\]
for all $i\in \N$ and $x,y\in M\setminus A_i$. Then $f$ is a Daugavet-point.

In particular, if $f\in S_{Lip_0(M)}$ is such that for every $\varepsilon>0$ there exists a sequence $(m_{u_iv_i})$ in $S(f,\varepsilon)$ such that at least one of the sequences $(u_i)$ and  $(v_i)$ is unbounded, then $f$ is a Daugavet-point.
\end{prop}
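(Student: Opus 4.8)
The plan is to reduce the statement to the $w^*$-slice machinery already established, with Proposition \ref{something} doing the analytic heavy lifting. To verify that $f$ is a Daugavet-point, I must show that for every slice $S(G,\beta)$ of $B_{\Lip_0(M)^*}=B_{\mathcal{F}(M)^{**}}$ and every $\eta>0$ there exists an element of that slice at distance at least $2-\eta$ from $f$. Wait --- $f$ lives in $\Lip_0(M)=\mathcal{F}(M)^*$, so the relevant unit ball is $B_{\Lip_0(M)}$ itself, and a slice of it is $S(F,\beta)=\{h\in B_{\Lip_0(M)}\colon F(h)>1-\beta\}$ for some $F\in S_{\Lip_0(M)^*}$. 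So the task is: given such an $F$ and given $\eta>0$, produce $h\in S(F,\beta)$ with $\|f-h\|\ge 2-\eta$. The natural candidate is to take $h$ of the form built from the molecules supplied by the hypothesis, and to witness largeness of the norm via these same molecules.

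First I would fix $\varepsilon>0$ small (to be specified in terms of $\eta$ and $\beta$) and invoke the hypothesis to obtain a sequence $(m_{u_iv_i})$ in the slice $S(f,\varepsilon)$ together with the pairwise disjoint sets $(A_i)$ satisfying the displayed estimate $d(u_i,x)+d(v_i,y)\ge(1-\varepsilon)(d(u_i,v_i)+d(x,y))$ for $x,y\in M\setminus A_i$. The key point is that this is exactly the configuration required by Proposition \ref{something}, so I may conclude $\limsup_i\|F+m_{u_iv_i}\|\ge 2-2\varepsilon$. Hence for infinitely many $i$ there is a norming functional: choose $h_i\in S_{\Lip_0(M)}$ with $h_i(F+m_{u_iv_i})$ close to $\|F+m_{u_iv_i}\|\ge 2-2\varepsilon$, which forces both $F(h_i)>1-\text{(small)}$ and $m_{u_iv_i}(h_i)=h_i(m_{u_iv_i})>1-\text{(small)}$. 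The second inequality, combined with $m_{u_iv_i}\in S(f,\varepsilon)$, i.e. $f(m_{u_iv_i})>1-\varepsilon$, gives $\|f-h_i\|\ge (f-h_i)(m_{u_iv_i})=f(m_{u_iv_i})-h_i(m_{u_iv_i})$. Here I have to be careful: I want $f$ and $h_i$ to be almost antipodal, so I should arrange $h_i(m_{u_iv_i})$ near $-1$ rather than $+1$. Replacing $h_i$ by $-h_i$ where needed, the right reading is: the functional $F+m_{u_iv_i}$ is nearly norm-$2$, so a near-norming $h_i$ satisfies $F(h_i)\approx 1$ and $m_{u_iv_i}(h_i)\approx 1$; to get distance $2$ from $f$ I instead want to exploit that $m_{u_iv_i}$ is close to $f$ in the sense $f(m_{u_iv_i})\approx 1$, so the $h_i$ witnessing $\|F+m_{u_iv_i}\|\approx 2$ evaluates near $+1$ on $m_{u_iv_i}$, whence $\|f-h_i\|\ge (h_i-f)$-pairing is not directly it. The clean way is to estimate $\|h_i-f\|\ge \max\{|(h_i-f)(m)|\}$ over available molecules $m$, using $m_{u_iv_i}$ for one unit of mass and a second molecule on which $f$ and $h_i$ separate.

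The main obstacle is precisely this last bookkeeping: converting the two scalar near-equalities $F(h_i)\approx 1$ and $h_i(m_{u_iv_i})\approx 1$ into a single genuine distance estimate $\|f-h_i\|\ge 2-\eta$ in the Lipschitz norm. The resolution is that Proposition \ref{something} already packages the correct geometry --- it produces, for each $i$, a function $g$ (called $h_i$ above) with $F(g)>1-\varepsilon-5\gamma$ and $g(m_{u_iv_i})=1-\varepsilon$ (or $\ge 1-\varepsilon$), so that $\|F+m_{u_iv_i}\|\ge 2-2\varepsilon-5\gamma$; the same $g$ then satisfies $g\in S(F,\beta)$ for $\gamma$ small, while $f(m_{u_iv_i})>1-\varepsilon$ and $g(m_{u_iv_i})\ge 1-\varepsilon$ together yield $\|f+g\|\ge (f+g)(m_{u_iv_i})>2-2\varepsilon$, i.e. $-g$ is the element of $S(-F,\cdot)$ almost antipodal to $f$; since $F$ was an arbitrary slice functional, replacing $F$ by $-F$ throughout shows every slice contains a point nearly at distance $2$ from $f$. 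I would therefore phrase the argument as: apply Proposition \ref{something} to the given data and to the functional defining an arbitrary slice, extract the witnessing $g$, and read off simultaneously that $g$ lies in the slice and that $\|f-(-g)\|\ge 2-O(\varepsilon)$, choosing $\varepsilon,\gamma$ small relative to the prescribed $\eta,\beta$. The final "in particular" clause is immediate from Proposition \ref{unbounded_prop}: if one of $(u_i),(v_i)$ is unbounded then the required disjoint sets $(A_i)$ can be manufactured exactly as in the proof of Proposition \ref{unbounded_prop}, so the hypothesis of the first part is met and $f$ is a Daugavet-point.
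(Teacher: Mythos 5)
Your proposal is correct and follows essentially the paper's own route: the paper gives no separate proof of Proposition \ref{proper_delta}, stating only that it follows ``by combining'' Proposition \ref{something} (and Proposition \ref{unbounded_prop} for the ``in particular'' clause), and the combination it intends is precisely your argument --- for an arbitrary slice $S(F,\beta)$ of $B_{\Lip_0(M)}$, extract a function $g$ nearly norming $-F+m_{u_iv_i}$, note that then both $-F(g)$ and $g(m_{u_iv_i})$ must be near $1$, so that $-g\in S(F,\beta)$ while $\|f-(-g)\|\ge f(m_{u_iv_i})+g(m_{u_iv_i})\ge 2-O(\varepsilon)$. After the initial sign hesitation your write-up resolves the $F\mapsto -F$ bookkeeping correctly, so no gap remains.
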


Now let us present the main result of this section.
\begin{thm}\label{main_thm4}
Assume that $M$ is proper and let $f\in S_{\Lip_0(M)}$. The following are equivalent:
\begin{enumerate}
    \item $f$ is a Daugavet-point;
    \item $f$ is a $\Delta$-point;
    \item $f$ is a $w^*$-Daugavet-point;
    \item $f$ is a $w^*$-$\Delta$-point;
    \item for every $\varepsilon>0$ there exists a sequence $(m_{u_iv_i})\subseteq S(f,\varepsilon)$ such that 
    \[\|m_{u_iv_i}-m_{u_jv_j}\|\ge 2-\varepsilon\]
    for every $i,j\in\N$ with $i\neq j$;
    \item for every $\varepsilon>0$ there exists a sequence $(m_{u_iv_i})\subseteq S(f,\varepsilon)$ such that either $d(u_i,v_i)\rightarrow0$ or at least one of the sequences $(u_i)$ and  $(v_i)$ is unbounded.
\end{enumerate}
\end{thm}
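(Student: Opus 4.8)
The plan is to establish the cycle of implications $(1)\Rightarrow(2)\Rightarrow(4)\Rightarrow(5)\Rightarrow(6)\Rightarrow(1)$ and to fold in condition $(3)$ through the additional chain $(1)\Rightarrow(3)\Rightarrow(4)$. The implications $(1)\Rightarrow(2)$, $(1)\Rightarrow(3)$, $(2)\Rightarrow(4)$, and $(3)\Rightarrow(4)$ are immediate from the definitions: a Daugavet-point is a $\Delta$-point, and since the $w^*$-slices of $B_{\Lip_0(M)}=B_{\mathcal F(M)^*}$ form a subfamily of all slices (those whose defining functional lies in $\mathcal F(M)$), the non-$w^*$ versions are formally stronger than their $w^*$-counterparts. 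The implication $(4)\Rightarrow(5)$ is exactly Proposition \ref{prop_delta}, which was derived from Proposition \ref{general_weak}. Thus the only genuine work lies in $(5)\Rightarrow(6)$ and $(6)\Rightarrow(1)$.

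For $(5)\Rightarrow(6)$ I would fix $\varepsilon\in(0,2)$ and take the sequence $(m_{u_iv_i})$ in $S(f,\varepsilon)$ with $\|m_{u_iv_i}-m_{u_jv_j}\|\ge 2-\varepsilon$ supplied by $(5)$. If one of $(u_i)$, $(v_i)$ is unbounded we are done, so suppose both are bounded. Here properness enters: closed bounded subsets of $M$ are compact, so after passing to a subsequence I may assume $u_i\to u$ and $v_i\to v$. If $u\neq v$, then $d(u_i,v_i)\to d(u,v)>0$ together with $\delta_{u_i}\to\delta_u$ and $\delta_{v_i}\to\delta_v$ in $\mathcal F(M)$ forces $m_{u_iv_i}\to m_{uv}$, contradicting the uniform separation $\|m_{u_iv_i}-m_{u_jv_j}\|\ge 2-\varepsilon>0$. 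Hence $u=v$, i.e.\ $d(u_i,v_i)\to 0$ along the subsequence, which still lies in $S(f,\varepsilon)$; this yields $(6)$.

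For $(6)\Rightarrow(1)$ I would split on whether $f$ is local. If $f$ is local, it is a Daugavet-point by {\cite[Theorem~3.6]{JRZ}} (equivalently {\cite[Theorem~1.4]{HOP}}). If $f$ is not local, fix $\varepsilon_0>0$ witnessing non-locality, so that $f(m_{uv})\le 1-\varepsilon_0$ whenever $0<d(u,v)<\varepsilon_0$. Given any $\varepsilon>0$, apply $(6)$ with $\varepsilon'=\tfrac12\min\{\varepsilon,\varepsilon_0\}$ to obtain a sequence $(m_{u_iv_i})\subseteq S(f,\varepsilon')\subseteq S(f,\varepsilon)$ satisfying one of the two alternatives. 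The alternative $d(u_i,v_i)\to 0$ is impossible: for large $i$ it would give $d(u_i,v_i)<\varepsilon_0$ and hence $f(m_{u_iv_i})\le 1-\varepsilon_0<1-\varepsilon'$, contradicting $m_{u_iv_i}\in S(f,\varepsilon')$. Therefore one of $(u_i)$, $(v_i)$ is unbounded, and since $\varepsilon>0$ was arbitrary, the ``in particular'' part of Proposition \ref{proper_delta} shows that $f$ is a Daugavet-point.

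The main obstacle is the per-$\varepsilon$ disjunction built into $(6)$: at different scales $\varepsilon$ one might a priori land in different alternatives, so $(6)$ cannot be fed directly into Proposition \ref{proper_delta}. The dichotomy on the locality of $f$ is what resolves this, pinning down the unbounded alternative uniformly over all $\varepsilon$ once $f$ fails to be local. Properness is used only in $(5)\Rightarrow(6)$, to pass from bounded sequences in $M$ to convergent subsequences.
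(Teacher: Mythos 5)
Your proposal is correct and follows essentially the same route as the paper: the definitional implications, Proposition \ref{prop_delta} for $(4)\Rightarrow(5)$, the properness/compactness argument (as in Proposition \ref{compact_delta}) for $(5)\Rightarrow(6)$, and the combination of {\cite[Theorem~1.4]{HOP}} with Proposition \ref{proper_delta} for $(6)\Rightarrow(1)$. The only difference is one of exposition: you spell out the local/non-local dichotomy that resolves the per-$\varepsilon$ disjunction in $(6)$, a step the paper leaves implicit in its citation of those two results.
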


\begin{proof}
We see that $(1)\Rightarrow(2)$, $(2)\Rightarrow(4)$, $(1)\Rightarrow(3)$ and $(3)\Rightarrow(4)$ are clear from definitions, $(4)\Rightarrow(5)$ is Proposition \ref{prop_delta} and $(6)\Rightarrow(1)$ is {\cite[Theorem~1.4]{HOP}} and Proposition \ref{proper_delta}. 

The only thing left to prove is $(5)\Rightarrow(6)$. Assume that for $\varepsilon>0$ there exists a sequence $(m_{u_iv_i})\subseteq S(f,\varepsilon)$ such that 
\[\|m_{u_iv_i}-m_{u_jv_j}\|\ge 2-\varepsilon.\]
If both sequences $(u_n)$ and $(v_n)$ are bounded, then analogously to the proof of Proposition \ref{compact_delta} by compactness by moving to subsequences we get  $d(u_n,v_n)\rightarrow0$.
\end{proof}

As a last part of this section we prove the equivalence of Daugavet- and $\Delta$-points for a different class of spaces of Lipschitz functions. By definition, if $f\in S_{\Lip_0(M)}$ is not local, then there exists $\varepsilon>0$ such that $m_{uv}\in S(f,\varepsilon)$ only if $d(u,v)>0$, and therefore from Proposition \ref{local_dent} we get that every subslice $S(g,\alpha)\subseteq S(f,\varepsilon)$ contains a denting point, since its defining functional $g$ is also not local.
Then by Remark \ref{remark_conv} and Proposition \ref{general_weak} we get the following.

\begin{prop}\label{prop_dent_delta}
Let $f\in S_{\Lip_0(M)}$ be a $w^*$-$\Delta$-point that is not local. Then for every $\varepsilon>0$ there exists a sequence $(\mu_i)$ in  $S(f,\varepsilon)\cap \dent(B_{\mathcal{F}(M)})$ such that 
\[\|\mu_i-\mu_j\|\ge 2-\varepsilon\]
for all $i,j\in \N$ with $i\neq j$.
\end{prop}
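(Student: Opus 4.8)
The plan is to combine three tools that are already available in the excerpt: Proposition \ref{general_weak} (the $w^*$-analogue of the sequence-construction lemma), Remark \ref{remark_conv} (which lets us choose the constructed $w^*$-sequence from any set that meets every subslice of a small slice around the defining functional), and Proposition \ref{local_dent} (which guarantees denting-point molecules in slices of non-local functionals). The hypothesis that $f$ is a $w^*$-$\Delta$-point means every $w^*$-slice of $B_{\mathcal F(M)}$ containing $f$ admits elements almost at distance $2$ from $f$; here the roles of space and dual are reversed relative to Proposition \ref{general}, so it is Proposition \ref{general_weak} applied with $X=\Lip_0(M)$, $X^*=\mathcal F(M)$, and $x_0^*=f$ that produces the desired sequence $(\mu_i)\subseteq B_{\mathcal F(M)}$ with mutual distances at least $2-\alpha$.

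First I would fix $\varepsilon>0$ and set $\alpha=\varepsilon$. The goal is to produce a $w^*$-slice of $B_{\mathcal F(M)}$, of the form $S(f,\delta)$ for small $\delta$, in which \emph{every subslice} contains a denting-point molecule, so that Remark \ref{remark_conv} applies with $A^*=\dent(B_{\mathcal F(M)})$. This is exactly where the non-locality of $f$ enters: as noted in the paragraph preceding the proposition, because $f$ is not local there is some $\delta>0$ (a fixed $\varepsilon$ in that paragraph's notation) such that $m_{uv}\in S(f,\delta)$ forces $d(u,v)$ bounded below, hence the defining functional $g$ of any subslice $S(g,\beta)\subseteq S(f,\delta)$ is again not local; Proposition \ref{local_dent} then yields a denting point of $B_{\mathcal F(M)}$ inside every such subslice. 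By {\cite[Corollary~3.44]{Weaver}} these denting points are molecules, so $A^*=\dent(B_{\mathcal F(M)})$ meets every subslice of $S(f,\delta)$.

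With that in hand, I would apply Proposition \ref{general_weak} to the $w^*$-$\Delta$-point $f$, choosing the $w^*$-slice small enough to lie inside $S(f,\delta)$, and invoke the second half of Remark \ref{remark_conv} to select the constructed sequence $(\mu_i)$ from $A^*=\dent(B_{\mathcal F(M)})$. The conclusion of Proposition \ref{general_weak} gives $\|\mu_i-\mu_j\|\ge 2-\alpha = 2-\varepsilon$ for $i\neq j$, and the membership $\mu_i\in S(f,\varepsilon)$ is guaranteed by taking $\alpha\le\varepsilon$ and the slice contained in $S(f,\varepsilon)$. This produces precisely the claimed sequence in $S(f,\varepsilon)\cap\dent(B_{\mathcal F(M)})$.

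The only genuinely delicate point is the bookkeeping of the various radii: I must choose the $w^*$-slice handed to Proposition \ref{general_weak} small enough that it is simultaneously contained in $S(f,\delta)$ (so that the denting-point-in-every-subslice property holds) and yields distances at least $2-\varepsilon$ with membership in $S(f,\varepsilon)$. Since $f$ is a $w^*$-$\Delta$-point it lies in $S(f,\delta')$ for every $\delta'>0$, so there is no obstruction to shrinking the slice, and the subslice condition of Remark \ref{remark_conv} is about subslices of a \emph{fixed} small slice $S(f,\delta)$, which is stable under further shrinking. I expect no substantive obstacle beyond ensuring these inclusions line up, as all the heavy lifting has already been done in Propositions \ref{general_weak} and \ref{local_dent} and in Remark \ref{remark_conv}.
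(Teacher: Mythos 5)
Your proposal is correct and is essentially the paper's own argument: the paper's justification (the paragraph preceding the proposition) likewise notes that non-locality of $f$ gives $\varepsilon_0>0$ such that every subslice of $S(f,\varepsilon_0)$ has a non-local defining functional and hence contains a denting point by Proposition~\ref{local_dent}, and then applies Remark~\ref{remark_conv} together with Proposition~\ref{general_weak}. Two slips worth noting, neither fatal: the slice fed to Proposition~\ref{general_weak} is a $w^*$-slice of $B_{\Lip_0(M)}$, so it cannot ``lie inside'' $S(f,\delta)\subseteq B_{\mathcal{F}(M)}$ --- and no such nesting is needed, since the relevant hypothesis of Remark~\ref{remark_conv} concerns only the slices of $B_{\mathcal{F}(M)}$ determined by $f$ itself --- and the membership $\mu_i\in S(f,\varepsilon)$ comes from $\mu_i\in S(f,1/i)$ after discarding the finitely many indices with $1/i>\varepsilon$, not from the choice of $\alpha$.
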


Recall that if $M$ is complete, then all denting points are molecules (see {\cite[Corollary~3.44]{Weaver}}). The proof of the next theorem is analogous to the proof of Theorem \ref{main_thm4} with the exception that we use Proposition  \ref{prop_dent_delta} instead of Proposition \ref{prop_delta}.
\begin{thm}
Assume that $M$ is such complete metric space that for every $\varepsilon>0$ and every bounded subset $M_0\subseteq M$, the set
\[\big\{m_{uv}\in\dent(B_{\mathcal{F}(M)})\colon u,v\in M_0, d(u,v)>\varepsilon\big\}\]
is finite. Let $f\in S_{\Lip_0(M)}$. The following are equivalent:
\begin{enumerate}\label{main_thm4_extra}
    \item $f$ is a Daugavet-point;
    \item $f$ is a $\Delta$-point;
    \item $f$ is a $w^*$-Daugavet-point;
    \item $f$ is a $w^*$-$\Delta$-point;
    \item for every $\varepsilon>0$ there exists a sequence $(m_{u_iv_i})\subseteq S(f,\varepsilon)$ such that either $d(u_i,v_i)\rightarrow0$ or at least one of the sequences $(u_i)$ and  $(v_i)$ is unbounded.
\end{enumerate}
\end{thm}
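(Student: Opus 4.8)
The plan is to mimic the structure of the proof of Theorem \ref{main_thm4}, which establishes the cycle of implications $(1)\Rightarrow(2)\Rightarrow(4)$, $(1)\Rightarrow(3)\Rightarrow(4)$, then $(4)\Rightarrow(5)$ and $(5)\Rightarrow(1)$, so that all five conditions become equivalent. The implications $(1)\Rightarrow(2)$, $(2)\Rightarrow(4)$, $(1)\Rightarrow(3)$, and $(3)\Rightarrow(4)$ are immediate from the definitions, exactly as in Theorem \ref{main_thm4}, since a Daugavet-point is a $\Delta$-point and every such point is automatically its own $w^*$-version (recall $\mathcal{F}(M)^*=\Lip_0(M)$). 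Likewise the implication $(5)\Rightarrow(1)$ is unchanged: it is precisely {\cite[Theorem~1.4]{HOP}} together with Proposition \ref{proper_delta}, since condition $(5)$ here is literally condition $(6)$ from Theorem \ref{main_thm4} and neither of those results uses properness.

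The one genuinely new step is $(4)\Rightarrow(5)$, where properness is no longer available and must be replaced by the denting-point finiteness hypothesis on $M$. First I would dispose of the case where $f$ is local: if $f$ is local then $f$ is already a Daugavet-point by {\cite[Theorem~1.4]{HOP}}, so condition $(5)$ follows trivially by selecting molecules $m_{u_iv_i}\in S(f,\varepsilon)$ with $d(u_i,v_i)\to 0$ directly from the definition of locality. Thus I may assume $f$ is not local. Here is where Proposition \ref{prop_dent_delta} replaces Proposition \ref{prop_delta}: since $f$ is a $w^*$-$\Delta$-point that is not local, for each $\varepsilon>0$ it yields a sequence $(\mu_i)\subseteq S(f,\varepsilon)\cap\dent(B_{\mathcal{F}(M)})$ with $\|\mu_i-\mu_j\|\ge 2-\varepsilon$ for $i\neq j$. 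Because $M$ is complete, by {\cite[Corollary~3.44]{Weaver}} each denting point $\mu_i$ is a molecule, say $\mu_i=m_{u_iv_i}$.

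The main obstacle, and the place where the hypothesis on $M$ does its work, is showing that this sequence of molecules satisfies the dichotomy in $(5)$. Suppose, for contradiction, that condition $(5)$ fails for this $\varepsilon$: then both $(u_i)$ and $(v_i)$ are bounded, and $d(u_i,v_i)$ does not tend to $0$, so after passing to a subsequence we may assume $d(u_i,v_i)\ge \eta$ for some $\eta>0$ and that all $u_i,v_i$ lie in a common bounded set $M_0\subseteq M$. But then all the $m_{u_iv_i}$ lie in the set
\[\big\{m_{uv}\in\dent(B_{\mathcal{F}(M)})\colon u,v\in M_0,\ d(u,v)>\eta/2\big\},\]
which is finite by hypothesis. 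A finite set cannot contain infinitely many distinct points that are pairwise at distance at least $2-\varepsilon>0$ from one another (once $\varepsilon<2$), giving the contradiction. This completes $(4)\Rightarrow(5)$ and hence closes the cycle of equivalences.
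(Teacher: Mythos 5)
Your proof is correct and takes essentially the same approach as the paper: the same cycle of implications, with the new step $(4)\Rightarrow(5)$ handled by splitting on whether $f$ is local, invoking Proposition \ref{prop_dent_delta} together with {\cite[Corollary~3.44]{Weaver}} to obtain a separated sequence of denting molecules, and then using the finiteness hypothesis. The only difference is cosmetic: you run the final step by contradiction (extracting a subsequence with $d(u_i,v_i)\ge\eta$ inside a bounded set), whereas the paper states the same argument in contrapositive form, concluding $d(u_i,v_i)\rightarrow 0$ directly.
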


\begin{proof}
We only need to show $(4)\Rightarrow(5)$. Assume $f$ is a $w^*$-$\Delta$-point and fix $\varepsilon>0$. If $f$ is local, then for every $\varepsilon>0$ there exist $(m_{u_iv_i})\subseteq S(f,\varepsilon)$ such that $d(u_i,v_i)\rightarrow0$. Otherwise by Proposition \ref{prop_dent_delta} we have a sequence $(m_{u_iv_i})\subseteq S(f,\alpha)\cap \dent(B_{\mathcal{F}(M)})$ such that 
\[\|m_{u_iv_i}-m_{u_jv_j}\|\ge 2-\varepsilon\]
for every $i,j\in \N$ with $i\neq j$. Clearly, the molecules $m_{u_iv_i}$ are pairwise distinct if $\varepsilon<2$, which we may assume. If both $(u_n)$ and $(v_n)$ are bounded, then by assumption for every $\gamma>0$ there is a finite number of denting points $m_{uv}$ such that $u\in \{u_i\colon i\in\N\}$, $v\in \{v_i\colon i\in\N\}$ and $d(u,v)>\gamma$. Therefore
$d(u_i,v_i)\rightarrow0$.
\end{proof}

As was noted in the end of Section 3, we can also apply Theorem \ref{main_thm4_extra} to metric spaces that are not proper, for example, to the metric space from  {\cite[Example~3.1]{Vee}}.

\section{Existence of Daugavet- and Delta-points in spaces of Lipschitz functions}
In this section we prove that every space of Lipschitz functions over an infinite metric space contains a $\Delta$-point. Furthermore, we show that the same does not hold for Daugavet-points, thus showing that there exist $\Delta$-points in spaces of Lipschitz functions that are not Daugavet-points.

We start by showing that if $M$ satisfies certain conditions, for example, if $M$ is unbounded or not uniformly discrete, then there exists a Daugavet-point in $\Lip_0(M)$.

\begin{prop}\label{Daug_existance}
Let $(m_{u_iv_i})$ be a sequence in $\mathcal{F}(M)$ such that there exists a sequence $(A_i)$ of pairwise disjoint subsets of $M$ such that $u_i\in A_i$ and
\[d(u_i,x)+d(v_i,y)\ge (1-1/2^{i+1})\big(d(u_i,v_i)+d(x,y)\big)\]
for all $i\in \N$ and $x,y\in M\setminus A_i$. Then there exists a Daugavet-point $f\in S_{\Lip_0(M)}$.
\end{prop}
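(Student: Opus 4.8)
The plan is to produce a single function $f\in S_{\Lip_0(M)}$ that is almost norm-attained by infinitely many of the molecules $m_{u_iv_i}$, and then feed $f$ into Proposition~\ref{proper_delta}. The first, soft, step is to exploit that the defect $\eta_i:=1/2^{i+1}$ tends to $0$: since $1-\eta_i\ge 1-\varepsilon$ whenever $\eta_i<\varepsilon$, every tail (indeed every subsequence) of $(m_{u_iv_i})$, together with the corresponding tail of $(A_i)$, again satisfies the hypothesis of Proposition~\ref{proper_delta} with parameter $\varepsilon$. Hence it suffices to construct $f\in S_{\Lip_0(M)}$ with $\limsup_i f(m_{u_iv_i})=1$: choosing the subsequence along which $f(m_{u_iv_i})\to1$ and deleting finitely many terms places the retained molecules in $S(f,\varepsilon)$ while preserving both pairwise disjointness of the sets and the separation inequality, so Proposition~\ref{proper_delta} yields that $f$ is a Daugavet-point.

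To build $f$ I would glue together one elementary building block per molecule, each supported in its own $A_i$, using the disjointness of $(A_i)$. Write $c_i:=(1-\eta_i)d(u_i,v_i)$ and work first with the index set $I_1:=\{i\colon v_i\notin A_i\}$, assuming it is infinite. For $i\in I_1$ the choice $y=v_i$ in the hypothesis gives, for every $x\in M\setminus A_i$,
\[
d(u_i,x)=d(u_i,x)+d(v_i,v_i)\ge (1-\eta_i)\big(d(u_i,v_i)+d(x,v_i)\big)\ge c_i,
\]
so the tent $g_i:=\big(c_i-d(u_i,\cdot)\big)^+$ is nonnegative, $1$-Lipschitz, vanishes outside $A_i$, and satisfies $g_i(u_i)=c_i$ and $g_i(v_i)=0$. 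Because the $A_i$ are disjoint, at most one $g_i$ is nonzero at any point, so $f:=\sup_{i\in S}g_i$ (for an appropriate infinite $S\subseteq I_1$, and after subtracting the constant $f(0)$ to enforce $f(0)=0$) is $1$-Lipschitz, with $f(u_i)=g_i(u_i)=c_i$ since every other block vanishes on $A_i\ni u_i$.

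The remaining point for $i\in I_1$ is to control $f(v_i)$. Since the $A_j$ are disjoint, $v_i$ lies in at most one set, say $A_{j(i)}$ with $j(i)\neq i$; the only danger is that the block $g_{j(i)}$ is positive at $v_i$. To kill this I would choose $S$ by a colouring argument: define a directed graph on $I_1$ by sending $i$ to $j(i)$ whenever $j(i)\in I_1$. Each vertex has out-degree at most one, so the underlying undirected graph is a pseudoforest and hence $3$-colourable; an infinite colour class $S$ is independent, so for $i\in S$ no retained block is nonzero at $v_i$ and $f(v_i)=0$. Then $f(m_{u_iv_i})=c_i/d(u_i,v_i)=1-\eta_i$ for $i\in S$; as $\eta_i\to0$ along $S$ this gives $\limsup_i f(m_{u_iv_i})=1$, and together with $\|f\|\le1$ it forces $\|f\|=1$, completing the argument when $I_1$ is infinite.

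The main obstacle is the complementary case, when $I_1$ is finite, i.e.\ $v_i\in A_i$ for all but finitely many $i$. Here the tent is useless: the separation inequality only yields $d(u_i,M\setminus A_i)+d(v_i,M\setminus A_i)\ge c_i$, so $u_i$ may sit close to $\partial A_i$, and no block supported in $A_i$ and vanishing at $v_i$ can be as large as $c_i$ at $u_i$. One is forced to realise the molecule inside $A_i$ by the inf/sup-convolution extension from the proof of Proposition~\ref{something} (with zero exterior baseline), which does give $g_i(u_i)-g_i(v_i)\ge c_i$ but at the cost of $g_i$ taking negative values near $\partial A_i$; such signed blocks no longer recombine into a $1$-Lipschitz function by a supremum (nor by a sum, which is only $2$-Lipschitz). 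Overcoming this — assembling the molecule-realising extensions into a genuinely $1$-Lipschitz function while keeping each value close to $1$, using the full strength of the separation inequality and, presumably, a further subsequence extraction — is the hard part of the proof, and is where I expect the real work to lie.
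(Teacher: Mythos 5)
Your reduction step is exactly the paper's: construct $f\in S_{\Lip_0(M)}$ with $f(m_{u_iv_i})\to 1$ along a subsequence and feed tails of the sequence into Proposition \ref{proper_delta}, which works since disjointness and the separation inequality (with constant $1-1/2^{i+1}\ge 1-\varepsilon$ for large $i$) survive passing to subsequences. Your tent construction is also correct for the indices with $v_i\notin A_i$. But the case you leave open --- $v_i\in A_i$ for all but finitely many $i$ --- is a genuine gap, not a corner case. The hypothesis only promises $u_i\in A_i$, and in the paper's own applications of this proposition (the proof of Proposition \ref{unbounded_prop}, where the $A_i$ are annuli $B(0,32a_in)\setminus B(0,a_i/n)$ containing $u_{k_i}$ and typically $v_{k_i}$ as well, and the construction behind Corollary \ref{Daug_existance_cor}) both points of the pair generally lie inside $A_i$. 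So your argument proves the proposition only in a special case that the intended applications cannot be reduced to, and you acknowledge that you do not know how to close the remaining case.

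The missing idea is to abandon blocks supported in the $A_i$ altogether. The paper first passes to a subsequence so that $A_n\cap\{u_j,v_j\colon j<n\}=\emptyset$ (possible by disjointness), and then defines $f$ recursively \emph{only on the points} $u_1,v_1,u_2,v_2,\ldots$, never on the rest of $A_n$: having defined $f$ on $M_{n-1}=\{u_j,v_j\colon j<n\}$ with Lipschitz constant at most $1-1/2^{n-1}$ and $f(m_{u_jv_j})\ge 1-1/2^{j-1}$, it sets
\[f(u_n)=\min_{x\in M_{n-1}}\big(f(x)+(1-1/2^n)d(x,u_n)\big),\quad f(v_n)=\max_{x\in M_{n-1}\cup\{u_n\}}\big(f(x)-(1-1/2^n)d(x,v_n)\big).\]
Since $M_{n-1}\subseteq M\setminus A_n$, the separation inequality applies with $x,y\in M_{n-1}$ and yields
\[f(u_n)-f(v_n)\ge(1-1/2^n)(1-1/2^{n+1})d(u_n,v_n)\ge(1-1/2^{n-1})d(u_n,v_n),\]
while the Lipschitz constant of $f$ on $M_n$ is at most $1-1/2^n$; McShane--Whitney then extends $f$ to all of $M$ with $\|f\|\le 1$, hence $\|f\|=1$. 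This is your inf/sup-convolution idea, but performed globally and recursively rather than blockwise: the geometric decay of the defects $1/2^{i+1}$ is spent on letting the Lipschitz budget creep up from $1-1/2^{n-1}$ to $1-1/2^n$ at each step, and precisely this slack absorbs the interference between different pairs that defeats both the supremum and the sum of your signed blocks. Without this (or some substitute handling $v_i\in A_i$), your proposal does not prove the proposition.
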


\begin{proof}
By moving to a subsequence we may assume that
\[A_i\cap \big\{u_j,v_j\colon j\in \{1,\ldots,i-1\}\big\}=\emptyset\]
for every $i\in\N$, since $(A_i)$ are pairwise disjoint.
For simplicity we will assume $u_1$ is the point $0$. 
Let us define $f$ on the set $\{u_i,v_i\colon i\in\N\}$ recursively. Let 
\[f(u_1)=f(v_1)=0.\]
Assume we have defined $f(u_1),\ldots,f(u_{n-1}),f(v_1),\ldots,f(v_{n-1})$ in such way that $f(m_{u_iv_i})\ge 1-1/2^{i-1}$ for every $i\in\{1,\ldots,n-1\}$ and the Lipschitz constant of $f$ is at most $1-1/2^{n-1}$. Let $M_{n-1}=\big\{u_i,v_i\colon i\in\{1,\ldots,n-1\}\big\}$ and let 
\[f(u_n)=\min_{x\in M_{n-1}}\big(f(x)+(1-1/2^n)d(x,u_n)\big)\]
and 
\[f(v_n)=\max_{x\in M_{n-1}\cup\{u_n\}}\big(f(x)-(1-1/2^n)d(x,v_n)\big).\]
Then the Lipschitz constant of $f$ is at most $1-1/2^n$ and either $f(v_n)=f(u_n)-(1-1/2^n)d(u_n,v_n)$ or
\begin{align*}
    f(u_n)-f(v_n)&=\min_{x,y\in M_{n-1}}\big(f(x)+(1-1/2^n)d(x,u_n)-f(y)+(1-1/2^n)d(y,v_n)\big)\\
    &\ge\min_{x,y\in M_{n-1}}\big((1-1/2^n)\big(d(x,u_n)+d(y,v_n)\big)-(1-1/2^{n-1})d(x,y)\big)\\
    &\ge(1-1/2^n)\min_{x,y\in M_{n-1}}\big(d(x,u_n)+d(y,v_n)-(1-1/2^{n+1})d(x,y)\big)\\
    &\ge (1-1/2^n)(1-1/2^{n+1})d(u_n,v_n)\\
    &\ge (1-1/2^{n-1})d(u_n,v_n),
\end{align*}
giving us $f(m_{u_nv_n})\ge1-1/2^{n-1}$. Extend $f$ by McShane--Whitney Theorem to $M$. Then by Proposition \ref{proper_delta} $f$ is a Daugavet-point.
\end{proof}

If $M$ is either unbounded or not uniformly discrete, then by following the idea of the proof of  {\cite[Proposition~2.4]{HOP}}, it is possible to construct a sequence $(m_{u_iv_i})$ in $\mathcal{F}(M)$ and a sequence $(A_i)$ of pairwise disjoint subsets of $M$ such that $u_i\in A_i$ and
\[d(u_i,x)+d(v_i,y)\ge (1-1/2^{i+1})\big(d(u_i,v_i)+d(x,y)\big)\]
for all $i\in \N$ and $x,y\in M\setminus A_i$.
Thus we get the following.

\begin{cor}\label{Daug_existance_cor}
Assume that $M$ is either unbounded or not uniformly discrete. Then $\Lip_0(M)$ contains a Daugavet-point.
\end{cor}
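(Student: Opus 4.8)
The goal is Corollary \ref{Daug_existance_cor}: if $M$ is unbounded or not uniformly discrete, then $\Lip_0(M)$ contains a Daugavet-point. By Proposition \ref{Daug_existance}, it suffices to produce a sequence $(m_{u_iv_i})$ in $\mathcal{F}(M)$ together with pairwise disjoint sets $(A_i)$ with $u_i\in A_i$ satisfying the separation estimate
\[d(u_i,x)+d(v_i,y)\ge \Big(1-\tfrac{1}{2^{i+1}}\Big)\big(d(u_i,v_i)+d(x,y)\big)\]
for all $i\in\N$ and all $x,y\in M\setminus A_i$. So the whole task reduces to a geometric construction, and I would split it according to the two hypotheses.

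In the unbounded case, the plan is to mimic the construction in the proof of Proposition \ref{unbounded_prop} and Lemma \ref{lemma_unbounded}, which already shows that pairs with one endpoint escaping to infinity can be isolated in disjoint annuli satisfying exactly this kind of estimate. Concretely, I would choose a rapidly increasing sequence of radii $r_1<r_2<\cdots$ tending to infinity, pick points $u_i,v_i$ with $d(u_i,v_i)$ comparable to $r_i$ sitting in the annulus between two consecutive radii, and take $A_i$ to be an annulus $B(0,R_i)\setminus B(0,\rho_i)$ chosen wide enough (in the style of Lemma \ref{lemma_unbounded}, with the $32a/\varepsilon$ outer and $a\varepsilon$ inner scalings) that any $x,y$ outside $A_i$ are either so far out or so close to $0$ that the triangle-inequality estimate closes with constant $1-1/2^{i+1}$. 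The radii are chosen so the annuli $A_i$ are nested and disjoint. The required near-alignment inequality is then essentially Lemma \ref{lemma_unbounded} applied at scale $a_i$ with $\varepsilon_i=1/2^{i+1}$.

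In the not-uniformly-discrete case, there exist pairs $u_i,v_i$ with $d(u_i,v_i)\to 0$ arbitrarily fast; here I would instead isolate each pair in a small ball $A_i=B(u_i,\eta_i)$ of radius $\eta_i$ much larger than $d(u_i,v_i)$ but much smaller than the distance separating this pair from all previously chosen data, so the $A_i$ are disjoint. For $x,y\notin A_i$ one has $d(u_i,x)\ge\eta_i$, which dwarfs $d(u_i,v_i)$, and combined with $d(u_i,x)\ge d(x,y)-d(u_i,v_i)-d(v_i,y)$ type bounds gives the estimate with the prescribed constant. The reference to the idea of {\cite[Proposition~2.4]{HOP}} indicates this is the template to follow in both regimes.

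The main obstacle I expect is bookkeeping the disjointness of the $(A_i)$ simultaneously with the metric inequality: each new set $A_i$ must avoid all earlier chosen points $u_j,v_j$ (as already handled by the passage to a subsequence in Proposition \ref{Daug_existance}), yet be large enough at its own scale for the alignment estimate to hold. This is a recursive choice of scales, and the delicate point is verifying the inequality uniformly for all $x,y\in M\setminus A_i$ rather than just for the selected points — exactly the content of Lemma \ref{lemma_unbounded} in the unbounded case, which must be adapted to the small-distance regime in the non-discrete case. Once the two constructions are in place, Proposition \ref{Daug_existance} immediately yields the Daugavet-point, completing the proof.
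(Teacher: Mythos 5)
Your overall skeleton coincides with the paper's: the paper proves this corollary precisely by invoking Proposition \ref{Daug_existance}, delegating the construction of the pairs $(m_{u_iv_i})$ and the disjoint sets $(A_i)$ to the idea of {\cite[Proposition~2.4]{HOP}}. Your unbounded-case sketch is moreover exactly the annulus construction already carried out in Lemma \ref{lemma_unbounded} and the proof of Proposition \ref{unbounded_prop}, so that half is sound.

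The gap is in the not-uniformly-discrete case, where you take $A_i=B(u_i,\eta_i)$. The two requirements you impose on $\eta_i$ are incompatible in general. For your estimate to hold for all $x,y\notin A_i$ you need (as your own computation shows) roughly $\eta_i\ge 2d(u_i,v_i)/\varepsilon_i$ with $\varepsilon_i=2^{-(i+1)}$; but then the balls cannot be made disjoint for \emph{any} choice of pairs. Take $M=\{0\}\cup\{4^{-n}\colon n\in\N\}\subseteq\mathbb{R}$, which is bounded and not uniformly discrete. Every pair of distinct points $u,v\in M$ satisfies $d(u,v)\ge\tfrac34\max\{d(u,0),d(v,0)\}$, so any ball centered at $u$ (or at $v$) of radius at least $2d(u,v)/\varepsilon_i\ge 8d(u,v)$ contains the point $0$; hence all such balls meet at $0$ and the family $(A_i)$ is never disjoint. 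Shrinking the radius does not help: if, say, $u_i=4^{-n}$, $v_i=4^{-n-1}$ and $A_i$ avoids both $0$ and $4^{-n+1}$ (as any ball around $u_i$ missing $0$ does), then $x=4^{-n+1}$, $y=0$ gives $d(u_i,x)+d(v_i,y)=\tfrac{13}{4}\cdot 4^{-n}$ while $(1-\varepsilon_i)\big(d(u_i,v_i)+d(x,y)\big)=(1-\varepsilon_i)\tfrac{19}{4}\cdot 4^{-n}$, which is larger whenever $\varepsilon_i<\tfrac{6}{19}$ --- and $\varepsilon_i\le\tfrac14$ always; analogous failures occur for every admissible pair in this $M$. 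So each $A_i$ must either contain $0$ (possible for at most one $i$, by disjointness) or contain all points at scales between roughly $\varepsilon_i d(u_i,v_i)$ and $d(u_i,v_i)/\varepsilon_i$. In other words, in this regime the sets $A_i$ are forced to be annuli around the accumulation point of the small pairs, excluding a punctured neighbourhood of that point --- the exact analogue of Lemma \ref{lemma_unbounded} with the accumulation point playing the role of the point at infinity --- and disjointness is then obtained by letting the scales $d(u_i,v_i)$ decrease rapidly. This ``hole'' in $A_i$ is the missing idea; the corollary is still true for the space above, but your ball-based construction genuinely fails on it.
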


Next we will show the existence of Daugavet-points under different conditions than in Proposition \ref{Daug_existance}. First let us provide another sufficient condition for Daugavet-points.

\begin{prop}\label{sufficient_daug}
Let $f\in S_{\Lip_0(M)}$, and assume that there exists a sequence $(u_i)$ of pairwise distinct points in $M$ such that for all $i\in\N$, $\delta>0$, and $v\in M\setminus\{u_i\}$ there exists $p\in[u_i,v]_\delta\setminus\{u_i\}$ with $m_{u_ip}\in S(f,\delta)$. Then $f$ is a Daugavet-point.
\end{prop}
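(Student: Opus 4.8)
The plan is to verify the hypothesis of Proposition \ref{proper_delta}: if for every $\varepsilon>0$ I can exhibit molecules $(m_{a_kb_k})$ in $S(f,\varepsilon)$ and pairwise disjoint sets $(A_k)$ with $a_k\in A_k$ and
\[d(a_k,x)+d(b_k,y)\ge(1-\varepsilon)\big(d(a_k,b_k)+d(x,y)\big)\qquad(x,y\in M\setminus A_k),\]
then $f$ is a Daugavet-point. Since this condition only strengthens as $\varepsilon$ decreases, it suffices to treat each fixed $\varepsilon\in(0,1/2]$. The centres $a_k$ will be a subsequence of $(u_i)$ and the partners $b_k$ will be supplied by the segment hypothesis.

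First I would record that a ball always serves as the isolating set. If $a,b\in M$, $r:=d(a,b)$, and $A:=B(a,r/\varepsilon)$, then $b\in A$ (as $\varepsilon\le1$), and for $x,y\notin A$ the reverse triangle inequality $d(b,y)\ge d(a,y)-r$ together with $d(x,y)\le d(a,x)+d(a,y)$ gives
\[d(a,x)+d(b,y)-(1-\varepsilon)\big(r+d(x,y)\big)\ge \varepsilon\big(d(a,x)+d(a,y)\big)-(2-\varepsilon)r\ge 2r-(2-\varepsilon)r\ge 0,\]
because $d(a,x),d(a,y)\ge r/\varepsilon$. Thus the molecule $m_{ab}$ is isolated by $B(a,r/\varepsilon)$ no matter how large $r$ is; the only quantity I must control to arrange disjointness is the radius $r/\varepsilon$, i.e.\ the length $r=d(a,b)$.

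With this in hand I would build the data recursively. Given disjoint $A_1,\dots,A_{k-1}$, pick a new index $i_k$ with $u_{i_k}\notin\bigcup_{l<k}A_l$ (possible, as the $u_i$ are pairwise distinct and only finitely many balls have been placed) and set $d_k:=\operatorname{dist}\big(u_{i_k},\bigcup_{l<k}A_l\big)>0$. Then I use the hypothesis at $u_{i_k}$ with a point $v$ close to $u_{i_k}$ and a small $\delta$: the requirement $b_k\in\seg{u_{i_k}}{v}{\delta}$ forces $r_k:=d(u_{i_k},b_k)\le d(u_{i_k},v)+\delta$, while $m_{u_{i_k}b_k}\in S(f,\delta)\subseteq S(f,\varepsilon)$. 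If I can make $r_k<\varepsilon d_k$, then $A_k:=B(u_{i_k},r_k/\varepsilon)\subseteq B(u_{i_k},d_k)$ stays disjoint from the earlier sets, and the inequality of the second paragraph completes the step.

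The hard part will be the existence of a point $v$ close to $u_{i_k}$, i.e.\ making $r_k$ genuinely small. When the chosen centre is not isolated this is immediate; in general I expect to control it by choosing \emph{which} centre to use. If the $u_i$ accumulate, I would take $i_k$ deep in the accumulating tail so that a later $u_j$ serves as a nearby $v$ (this already handles, e.g., $u_i=1/i$, where the nearest-neighbour distances tend to $0$); if $(u_i)$ is unbounded I would instead send the centres off to infinity, so that $d_k\to\infty$ and even a molecule of bounded length stays isolated, which also lets me fall back on the unbounded clause of Proposition \ref{proper_delta}. The genuinely delicate regime is a bounded, uniformly discrete configuration, where $r_k$ cannot be made small and the balls $B(u_{i_k},r_k/\varepsilon)$ are too large to be disjoint; there I would abandon balls for thinner isolating sets $A_k$ (for instance a small neighbourhood of $\{u_{i_k}\}$) and verify the isolation inequality directly from the hypothesis. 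Reconciling these regimes into a single clean choice of $(A_k)$ is the main obstacle, and it is precisely where the full strength of the hypothesis—that a suitable $p$ exists for \emph{every} direction $v$ and every $\delta$—must be exploited.
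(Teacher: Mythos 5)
Your plan—reducing the statement to Proposition~\ref{proper_delta}—cannot be completed, and the regime you flag as ``the main obstacle'' (bounded, uniformly discrete $M$) is not merely delicate but a provable dead end. The paper itself exhibits the obstruction: in the space discussed right after Proposition~\ref{Daug_existance_2} ($M$ infinite with two fixed points $x,y$, where $d(p,q)=1$ if exactly one of $p,q$ lies in $\{x,y\}$ and $d(p,q)=2$ otherwise), the function $-f$ with $f(p)=\inf_j d(u_j,p)$ built there satisfies the hypothesis of Proposition~\ref{sufficient_daug}, yet the paper notes that \emph{no} sequence of molecules admits pairwise disjoint isolating sets of the kind needed in Propositions~\ref{something}, \ref{proper_delta} and \ref{Daug_existance}. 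One can check this directly: for $\varepsilon<1/2$, any set $A$ isolating a molecule $m_{ab}$ with constant $1-\varepsilon$ must contain $x$ or $y$; for instance, if $a,b\notin\{x,y\}$ and $x,b\notin A$, the test pair $(x',y')=(x,b)$ gives $d(a,x)+d(b,b)=1$ while $(1-\varepsilon)\big(d(a,b)+d(x,b)\big)=3(1-\varepsilon)$, and the remaining cases are similar. Hence at most two pairwise disjoint isolating sets can exist \emph{whatever their shape}, so replacing balls by ``thinner'' sets cannot rescue the recursion: the hypothesis of Proposition~\ref{sufficient_daug} simply does not imply that of Proposition~\ref{proper_delta}. (A smaller slip in the same spirit: your claim that a fresh centre $u_{i_k}\notin\bigcup_{l<k}A_l$ exists ``as only finitely many balls have been placed'' fails in bounded spaces, where a single ball of radius $r/\varepsilon$ may already be all of $M$.) Your ball computation itself is correct and essentially re-derives the unbounded clause (cf.\ Lemma~\ref{lemma_unbounded}), but the whole point of Proposition~\ref{sufficient_daug} is to reach exactly those spaces where the isolation method is unavailable.

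The paper's proof avoids isolation entirely and is a direct averaging argument. If $f$ is local it is a Daugavet-point by the known results, so assume it is not; then there is $\delta>0$, which we may take with $2\delta<\varepsilon$ and $\delta<1$, such that $d(u,v)>\delta$ whenever $m_{uv}\in S(f,\delta)$. Fix an arbitrary $g\in S_{\Lip_0(M)}$ and, for each $i$, let $h_i$ agree with $g$ on $M\setminus\{u_i\}$ and set $h_i(u_i)=\sup_{v\neq u_i}\big(g(v)-d(v,u_i)\big)$, so that $\|h_i\|\le 1$. Choosing $v$ that almost attains this supremum and applying your hypothesis at $u_i$ with parameter $\delta^2/2$ produces $p\in\seg{u_i}{v}{\delta^2/2}$ with $m_{u_ip}\in S(f,\delta^2/2)$, hence $d(u_i,p)>\delta$, and a short computation gives $h_i(p)-h_i(u_i)>(1-\delta)d(u_i,p)$; therefore $\|f-h_i\|\ge f(m_{u_ip})-h_i(m_{u_ip})>2-2\delta>2-\varepsilon$, i.e., $h_i\in\Delta_\varepsilon(f)$. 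Since each $h_j$ differs from $g$ only at the single point $u_j$, one gets $\big\|g-\frac1i\sum_{j=1}^i h_j\big\|\le 4/i$, so $g\in\clconv\Delta_\varepsilon(f)$ for every $g\in S_{\Lip_0(M)}$, and $f$ is a Daugavet-point. This single-point perturbation plus averaging is the idea your proposal is missing, and it is what lets the proposition cover the bounded, uniformly discrete case.
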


\begin{proof}
If $f$ is local, then it is a Daugavet-point, therefore we only need to consider the case when $f$ is not local. Fix $g\in S_{\Lip_0(M)}$ and $\varepsilon>0$. We will show that $g\in\clconv \Delta_\varepsilon(f)$.
For every $i\in\N$ let us
define a function $h_i$ on $M$ in such way that $h_i|_{M\setminus \{u_i\}}=g|_{M\setminus \{u_i\}}$ and 
\[h_i(u_i)=\sup_{v\in M\setminus \{u_i\}}\big(g(v)-d(v,u_i)\big).\]
Then $\|h_i\|\le 1$. Since $f$ is not local, there exists $\delta>0$ such that $d(u,v)>\delta$ for every $m_{uv}\in S(f,\delta)$. We may also assume that $2\delta<\varepsilon$ and $\delta<1$. There exists $v\in M\setminus \{u_i\}$ such that
\[h_i(u_i)<g(v)-d(v,u_i)+\delta^2/2.\] Therefore 
\[h_i(v)-h_i(u_i)>g(v)-\big(g(v)-d(v,u_i)+\delta^2/2\big)=d(v,u_i)-\delta^2/2.\]
There exists $p\in[u_i,v]_{\delta^2/2}$ such that $m_{u_ip}\in S(f,\delta^2/2)$. Since $\delta^2/2<\delta$, we get $d(u_i,p)>\delta$. Hence
\begin{align*}
    h_i(p)-h_i(u_i)&=h_i(p)-h_i(v)+h_i(v)-h_i(u_i)\\
    &>-d(p,v)+d(v,u_i)-\delta^2/2\\
    &>d(p,u_i)-\delta^2\\
    &>(1-\delta)d(p,u_i).
\end{align*}
Then for every $i\in \N$ we get
\[\|f-h_i\|\ge f(m_{u_ip})-h_i(m_{u_ip})>2-2\delta>2-\varepsilon\]
and since $h_j$ and $f$ can be different only in $u_j$, we also get
\[\Big\|f-\frac{1}{i}\sum_{j=1}^{i}h_{j} \Big\|\le \frac{4}{i}.\]
Therefore $g\in \clconv\Delta_\varepsilon(f)$, which means that $f$ is a Daugavet-point.
\end{proof}

\begin{prop}\label{Daug_existance_2}
Let $(u_i)$ be a sequence of pairwise distinct points in $M$ such that for all $i\in\N$, $\delta>0$, and $v\in M\setminus\{u_i\}$ there exists $p\in[u_i,v]_\delta\setminus\{u_i\}$ with $\inf_{j\in\N}d(u_j,p)>(1-\delta)d(u_i,p)$.
Then there exists a Daugavet-point $f\in S_{\Lip_0(M)}$.
\end{prop}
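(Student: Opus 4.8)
The plan is to build a single explicit norm-one functional $f$ that witnesses the hypotheses of Proposition~\ref{sufficient_daug}; the desired conclusion then follows at once from that proposition. The quantity $\inf_{j\in\N}d(u_j,p)$ appearing in the hypothesis is exactly the distance from $p$ to the set $\{u_j\colon j\in\N\}$, which suggests taking $f$ to be (a translate of) the negative of this distance function. Concretely, I would set $\rho(x)=\inf_{j\in\N}d(x,u_j)$ and define $f(x)=\rho(0)-\rho(x)$, so that $f(0)=0$ and hence $f\in\Lip_0(M)$.

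First I would record the elementary properties of $f$. Since the distance to a fixed subset is always $1$-Lipschitz, $\rho$ is $1$-Lipschitz and therefore $\|f\|\le1$. Moreover $\rho(u_i)=0$ for every $i$ (the infimum is attained at $j=i$), so $f$ takes the same constant value $\rho(0)$ at every point $u_i$. Consequently, for any $i\in\N$ and any $p\in M\setminus\{u_i\}$,
\[
f(m_{u_ip})=\frac{f(u_i)-f(p)}{d(u_i,p)}=\frac{\rho(p)-\rho(u_i)}{d(u_i,p)}=\frac{\inf_{j\in\N}d(u_j,p)}{d(u_i,p)}.
\]

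With this identity in hand, the verification of the hypotheses of Proposition~\ref{sufficient_daug} is immediate. Given $i\in\N$, $\delta>0$ and $v\in M\setminus\{u_i\}$, the hypothesis of the present proposition produces a point $p\in[u_i,v]_\delta\setminus\{u_i\}$ with $\inf_{j\in\N}d(u_j,p)>(1-\delta)d(u_i,p)$; by the displayed formula this says precisely that $f(m_{u_ip})>1-\delta$, i.e.\ $m_{u_ip}\in S(f,\delta)$. Taking $i=1$ and $v=u_2$ (which is admissible since the $u_j$ are pairwise distinct) and letting $\delta\to0$ also gives $\|f\|\ge1$, so that $f\in S_{\Lip_0(M)}$. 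Thus $f$ meets all the requirements of Proposition~\ref{sufficient_daug}, and we conclude that $f$ is a Daugavet-point.

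I do not anticipate a genuine obstacle: all of the analytic work is already contained in Proposition~\ref{sufficient_daug}, and the sole task of the present statement is to convert the purely metric comparison against the whole sequence $(u_j)$ into the existence of one functional that is large on the relevant molecules. The only points needing care are that $\rho$ is finite (clear, since $\rho(x)\le d(x,u_1)$) and $1$-Lipschitz, and that $f$ has norm exactly $1$ rather than merely at most $1$; both are routine.
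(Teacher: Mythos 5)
Your proof is correct and takes essentially the same route as the paper: the paper defines $f(p)=\inf_{i\in\N}d(u_i,p)$ (normalizing by assuming $u_1=0$), verifies the same molecule estimate from the hypothesis, and applies Proposition~\ref{sufficient_daug} to $-f$, concluding that $f$ is a Daugavet-point; your functional is precisely that $-f$ up to the constant shift $\rho(0)$ which enforces the $f(0)=0$ normalization. The only cosmetic differences are that you verify the hypotheses of Proposition~\ref{sufficient_daug} directly for your $f$ instead of negating at the end, and that you spell out why $\|f\|=1$ rather than just $\|f\|\le 1$.
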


\begin{proof}
For simplicity assume that $u_1$ is the fixed point $0$. Define $f$ by
\[f(p)=\inf_{i\in\N}d(u_i,p).\]
Then $\|f\|\le1$ by {\cite[Proposition~1.32]{Weaver}}. Furthermore, for all $i\in\N$, $\delta>0$, and $v\in M\setminus\{u_i\}$ there exists $p\in[u_i,v]_\delta\setminus\{u_i\}$ with $\inf_{j\in\N}d(u_j,p)>(1-\delta)d(u_i,p)$. Hence
\[f(p)-f(u_i)=\inf_{j\in\N}d(u_j,p)>(1-\delta)d(u_i,p).\]
By Proposition \ref{sufficient_daug}, $-f$ is a Daugavet-point. Therefore $f$ is also a Daugavet-point.
\end{proof}

Proposition \ref{Daug_existance_2} can be applied for metric spaces where Proposition \ref{Daug_existance} can not be applied. For example, let $M$ be infinite with the metric 
\[d(p,q)=\begin{cases}
        1, & p\in\{x,y\},q\notin\{x,y\}\text{ or }p\notin\{x,y\},q\in\{x,y\},\\
        2, & \text{otherwise}
        \end{cases}\]
where $x,y\in M$ are two fixed points. Then there does not exist a sequence $(m_{u_iv_i})$  in $\mathcal{F}(M)$ satisfying the conditions of Proposition \ref{Daug_existance}. However, any sequence $(u_i)$ of pairwise distinct points in $M\setminus\{x,y\}$ satisfies the conditions of Proposition \ref{Daug_existance_2}, thus there exists a Daugavet-point in $\Lip_0(M)$.

Next we will show the existence of $\Delta$-points in spaces of Lipschitz functions over infinite metric spaces. First we will introduce two helpful lemmas.

\begin{lem}\label{lem_equal_existance}
Assume that $M$ is infinite, bounded, and uniformly discrete. There exist $a>0$ and a sequence $(u_i)\subseteq M$ such that
\[a\frac{i-1}{i}\le d(u_i,u_j)\le a\frac{i+1}{i}\]
for all $i,j\in\N$ with $i<j$.
\end{lem}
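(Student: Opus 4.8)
The plan is to extract from $M$ an ``asymptotically equilateral'' subsequence by a Ramsey-type nested refinement. Write $\theta>0$ for the uniform discreteness constant (so $d(p,q)\ge\theta$ whenever $p\neq q$) and let $D$ be the diameter of $M$; since $M$ is bounded, every pairwise distance lies in the compact interval $[\theta,D]$. The target value $a$ will be produced as a common point of a shrinking family of intervals, and the two-sided bound $a\frac{i-1}{i}\le d(u_i,u_j)\le a\frac{i+1}{i}$ is exactly the estimate $|d(u_i,u_j)-a|\le a/i$, so it suffices to arrange that, once $u_i$ is chosen, all later distances $d(u_i,u_j)$ sit within $a/i$ of $a$.

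First I would fix a sequence $\varepsilon_k\downarrow 0$ with $\varepsilon_k\le\theta/k$ and build a decreasing chain of infinite sets $M=S_0\supseteq S_1\supseteq S_2\supseteq\cdots$ as follows. At step $k$, partition $[\theta,D]$ into finitely many subintervals each of length less than $\varepsilon_k$, and colour each pair of points of $S_{k-1}$ by the subinterval containing its distance. By the infinite Ramsey theorem for pairs (finitely many colours), there is an infinite monochromatic $S_k\subseteq S_{k-1}$; that is, all pairwise distances of points in $S_k$ lie in a single interval $J_k$ of length less than $\varepsilon_k$. Then I would pick points $u_k\in S_k$, recursively distinct, so that for every $i<j$ both $u_i$ and $u_j$ lie in $S_i$ (this holds because $u_j\in S_j\subseteq S_i$), whence $d(u_i,u_j)\in J_i$.

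To define $a$, note that for $k'\ge k$ the infinite set $S_{k'}$ lies in $S_k$, so its (nonempty set of) pairwise distances lie in $J_k\cap J_{k'}$; thus the closed intervals $\overline{J_k}\subseteq[\theta,D]$ pairwise intersect. Since a pairwise-intersecting family of intervals on the line has a common point (Helly's theorem in dimension one, using boundedness for the infinite family), there is $a$ with $a\in\overline{J_k}$ for every $k$; in particular $a\in[\theta,D]$, so $a\ge\theta>0$. For $i<j$ we then have $d(u_i,u_j)\in J_i$ and $a\in\overline{J_i}$, so $|d(u_i,u_j)-a|\le|J_i|<\varepsilon_i\le\theta/i\le a/i$, which is the desired inequality (the strict bound certainly yielding the non-strict one in the statement).

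The main obstacle is conceptual rather than computational: a plain iterated-subsequence extraction only guarantees that the distances from each fixed point converge, but possibly to different limits, which is not enough. Forcing all pairwise distances to cluster around one common value $a$ is precisely what the Ramsey colouring provides, and the shrinking tolerance $1/i$ is matched by letting the colour classes refine along the chain $(S_k)$. The only points needing a little care are choosing the $u_k$ genuinely distinct and with the nesting $u_j\in S_i$ for $j\ge i$ (routine, since each $S_k$ is infinite) and checking $a\ge\theta$ so that $\theta/i\le a/i$.
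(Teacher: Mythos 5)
Your proof is correct, and it takes a genuinely different route from the paper's. The paper argues recursively and more elementarily: having chosen $u_n$ inside an infinite set $M_{n-1}$, it defines the critical radius $a_n=\sup\bigl\{b\colon |\{p\in M_{n-1}\colon d(u_n,p)\le b\}|<\infty\bigr\}$ (positive and finite precisely by uniform discreteness and boundedness), notes that the annulus $M_n=\bigl\{p\in M_{n-1}\colon a_n\tfrac{2n-1}{2n}<d(u_n,p)<a_n\tfrac{2n+1}{2n}\bigr\}$ is infinite by the definition of $a_n$, and iterates; this pins the distances from each $u_i$ to all later points near $a_i$, and a final Bolzano--Weierstrass extraction on the bounded sequence $(a_i)$ (plus a check that re-indexing preserves the estimates, which works because the annulus tolerances shrink with the index) produces the single value $a$. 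You instead control \emph{all} pairwise distances at once, via the infinite Ramsey theorem for pairs applied to finer and finer partitions of $[\theta,D]$, and you locate $a$ through the one-dimensional Helly property of the pairwise-intersecting closed intervals $\overline{J_k}$. Your approach buys conceptual cleanliness: there is no annulus bookkeeping, no subsequence-and-reindexing step (Helly replaces Bolzano--Weierstrass), and the role of each hypothesis is transparent (boundedness gives a compact range for the colouring; uniform discreteness gives $a\ge\theta>0$, which is exactly what makes the tolerance $\varepsilon_i\le\theta/i\le a/i$ close the estimate). The paper's approach buys self-containedness: it needs nothing beyond pigeonhole-style reasoning on infinite sets, whereas you invoke Ramsey and Helly---standard but noticeably heavier tools for a lemma of this size. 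Both arguments ultimately build the same kind of object, a nested chain of infinite subsets from which the $u_i$ are drawn, so the difference is in mechanism rather than in strategy.
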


\begin{proof}
We will construct the sequence $(u_i)$ recursively.
Choose $u_1\in M$ and let
\[a_1=\sup \Big\{b\in\mathbb{R}\colon\big|\big\{p\in M\colon d(u_1,p)\le b\big\}\big|< \infty\Big\}.\]
Clearly, $0<a_1<\infty$, since $M$ is bounded and uniformly discrete.
Let
\[M_1=\big\{p\in M\colon a_1(2-1)/2<d(u_1,p)<a_1(2+1)/2\big\}.\]
By the definition of $a_1$, the set $M_1$ is infinite. 

Assume that we have found $u_1,\ldots,u_{n-1}\in M$, $a_1,\ldots,a_{n-1}>0$, and infinite sets $M_{n-1}\subseteq\ldots\subseteq M_1\subseteq M$ such that 
\begin{equation}\label{eq_6}
    a_i\frac{2i-1}{2i}\le d(u_i,p)\le a_i\frac{2i+1}{2i}
\end{equation}
for all $i\in\{1,\ldots,n-1\}$ and $p\in M_i$.
Now let $u_{n}\in M_{n-1}$, let
\[a_{n}=\sup \Big\{b\in\mathbb{R}\colon\big|\big\{p\in M_{n-1}\colon d(u_{n},p)\le b\big\}\big|< \infty\Big\},\]
and let
\[M_{n}=\big\{p\in M_{n-1}\colon a_{n}(2n-1)/(2n)<d(u_{n},p)<a_{n}(2n+1)/(2n)\big\}.\]
As before, $0<a_{n}<\infty$ and $M_{n}\subseteq M_{n-1}$ is infinite.

Clearly, the sequence $(a_i)$ is positive and bounded, since $M$ is bounded and uniformly discrete. Therefore, by moving to a subsequence, we may assume there exists $a>0$ such that 
\[\frac{2i-2}{2i-1}a<a_i<\frac{2i+2}{2i+1}a\]
for every $i\in\N$. Note that moving to a subsequence the condition \eqref{eq_6} remains true. Then for all $i,j\in\N$ with $i<j$ we have 
\[a\frac{i-1}{i}<a_i\frac{2i-1}{2i}\le d(u_i,u_j)\le a_i\frac{2i+1}{2i}< a\frac{i+1}{i},\]
since $u_j\in M_{i}$.
\end{proof}

\begin{lem}\label{lem_delta_existance}
Assume that $M$ is infinite, bounded, and uniformly discrete. There exist $a>0$ and two sequences $(u_i)$ and $(v_i)$ of pairwise distinct points in $M$ such that 
\[a\frac{i+1}{i}\ge d(u_i,v_i)\ge a\frac{i-1}{i}\]
and
\[\min\big\{d(u_i,p),d(v_i,p)\big\}\ge a\frac{i-1}{i}\]
and
\[\min\big\{d(u_i,q),d(v_i,q)\big\}\ge a\frac{i-1}{2i}\]
for all $i\in\N$, $p\in \{u_j,v_j\colon j>i\}$, and $q\in M\setminus \{u_i,v_i\}$.
\end{lem}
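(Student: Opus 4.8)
The plan is to choose the scale $a$ intrinsically, as the smallest scale at which $M$ carries infinitely many pairwise disjoint ``close'' pairs, and then to read off all three required estimates from a single recursive selection governed by that minimality. Write $\rho(x)=\inf_{y\neq x}d(x,y)$, and use that $M$ is uniformly discrete ($\theta:=\inf_{x\neq y}d(x,y)>0$), bounded ($\operatorname{diam}M<\infty$) and infinite (so there are infinitely many pairwise disjoint two-point subsets). I would set
\[
a=\inf\Big\{\liminf_n d(x_n,y_n)\ :\ \{x_n,y_n\}_n \text{ pairwise disjoint pairs in } M\Big\},
\]
so that $a\in[\theta,\operatorname{diam}M]$, in particular $a>0$. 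The crucial feature is minimality: no infinite family of pairwise disjoint pairs has $\liminf$ diameter below $a$; equivalently, for every $\delta>0$ only finitely many pairwise disjoint pairs have diameter $\le a-\delta$. A first consequence, used at every step, is that for each $\varepsilon>0$ there are still infinitely many pairwise disjoint pairs whose diameter lies in $[a-\varepsilon,a+\varepsilon)$.

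Two finiteness statements, both proved from minimality by the same mechanism, would then do all the work. \emph{Isolation:} for each $\delta>0$ only finitely many $x\in M$ satisfy $\rho(x)<a/2-\delta$. Indeed such an $x$ has a neighbour within $a/2-\delta$; a maximal matching of the graph on $M$ whose edges are the pairs of diameter $<a/2-\delta$ is finite by minimality, its vertex set $S$ is a finite vertex cover, so every such $x$ lies in $S$ or in some ball $B(s,a/2-\delta)$ with $s\in S$; but such a ball holds only finitely many points, since any two of them are at distance $<a-2\delta$ and infinitely many would give infinitely many disjoint pairs of diameter $<a$. \emph{Separation:} by the identical argument, for each $i$ the pairs of diameter $<a(1-1/i)$ admit a finite vertex cover $S^{(i)}$.

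With these in hand I would build the pairs recursively. Having chosen $P_1,\dots,P_{i-1}$, let $\Phi_i$ be the finite union of the points already used, the covers $S^{(1)},\dots,S^{(i)}$, and the finite exceptional set $\{x:\rho(x)<a\tfrac{i-1}{2i}\}$. Since infinitely many pairwise disjoint pairs have diameter in $[a(1-1/i),a(1+1/i))$ and $\Phi_i$ is finite (hence meets only finitely many of them), I pick $P_i=\{u_i,v_i\}$ in that band avoiding $\Phi_i$. Then the diameter condition holds by the band; the isolation condition holds because $u_i,v_i\notin\{x:\rho(x)<a\tfrac{i-1}{2i}\}$ forces $\rho(u_i),\rho(v_i)\ge a\frac{i-1}{2i}$, whence $d(u_i,q),d(v_i,q)\ge a\frac{i-1}{2i}$ for every $q\neq u_i,v_i$; and the separation condition holds because for $j>i$ the pair $P_j$ avoids $S^{(i)}\subseteq\Phi_j$ while $u_i,v_i\notin S^{(i)}$, so no edge between $\{u_i,v_i\}$ and $P_j$ is shorter than $a\frac{i-1}{i}$. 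All selected points are distinct since $\Phi_i$ contains the previously used ones.

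The main obstacle, and the heart of the matter, is pinning down the correct scale. It is tempting to feed $M$ into Lemma~\ref{lem_equal_existance}, but its output scale is the \emph{accumulation} scale, which can strictly exceed the scale at which isolated close pairs live (for instance, a space of ``dumbbells'' of size $1/4$ placed mutually at distance $1$ forces $a=1/4$, not $1$), so the isolation estimate would break. Defining $a$ as the minimal disjoint-pair scale repairs precisely this, because a point that is too close to another, or a ball that is too crowded, would manufacture infinitely many disjoint pairs below $a$. I expect the two finiteness lemmas of the second paragraph to be the only delicate points; granting them, the recursive selection and the bookkeeping matching the exact bounds $a\frac{i\pm1}{i}$, $a\frac{i-1}{i}$ and $a\frac{i-1}{2i}$ are routine, and Lemma~\ref{lem_equal_existance} is in fact not needed.
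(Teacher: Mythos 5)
Your proof is correct, but it follows a genuinely different route from the paper's. The paper does not introduce your minimal disjoint-pair scale; it sets $a_1=\sup\big\{b\colon \big|\bigcup_{p\in M}\big(B(p,b)\setminus\{p\}\big)\big|<\infty\big\}$, the threshold below which only finitely many points of $M$ have another point that close, then recursively picks $x_n$ with $\inf_{y\neq x_n}d(x_n,y)>a_1(1-1/n)$ together with a neighbour $y_n$ at distance at most $a_1(1+1/n)$, and finally splits into two cases after passing to a subsequence: if the $y_n$ can be taken pairwise distinct, one sets $a=a_1$ and $(u_i,v_i)=(x_i,y_i)$; if instead all $y_n$ coincide with a single point $z$ --- exactly your star configuration --- the paper applies Lemma~\ref{lem_equal_existance} to the cluster $M_0=\{x_n\colon n\in\N\}$, obtaining an almost-equilateral sequence at some scale $a\le 2a_1$, and pairs up its consecutive terms. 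That degenerate case is where the factor $2$ in $a\frac{i-1}{2i}$ is spent: there the isolation estimate comes from $\inf_{y\neq x_n}d(x_n,y)>a_1(1-1/n)$ combined with $a\le 2a_1$. Your choice of $a$ dissolves this dichotomy, since pairs through a common centre are never disjoint and so a star cannot drag your scale down; your two matching/vertex-cover finiteness lemmas then give the isolation bound (scale $a/2$) and the separation bound (scale $a$) uniformly in one recursion, with the same factor $2$ absorbed once and for all into the Isolation lemma for the same geometric reason (a pair of diameter $a$ may lie at distance $a/2$ from a third point without creating disjoint short pairs). In short, the paper's argument buys reuse of Lemma~\ref{lem_equal_existance} and its sup-threshold technique at the price of a case analysis, while yours is self-contained and, as you note, does not need Lemma~\ref{lem_equal_existance} at all; your warning that feeding $M$ itself into that lemma would select the wrong (accumulation) scale is accurate, and is precisely why the paper applies it only to the cluster set of points already known to be $\approx a_1$-isolated in $M$. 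One wording caveat: ``only finitely many pairwise disjoint pairs have diameter $\le a-\delta$'' should be stated as ``every family of pairwise disjoint pairs, each of diameter $\le a-\delta$, is finite'' --- the collection of all such pairs can be infinite (again the star) --- but since you only ever use it through finiteness of maximal matchings, nothing in your argument breaks.
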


\begin{proof}
Set 
\[a_1=\sup \Big\{b\in\mathbb{R}\colon\Big|\bigcup_{p\in M} \big(B(p,b)\setminus\{p\}\big)\Big|< \infty\Big\}.\]
Clearly, $0<a_1<\infty$, since $M$ is bounded and uniformly discrete. We will construct recursively two sequences $(x_i)$ and $(y_i)$ in $M$. Choose 
\[x_1\in \bigcup_{p\in M} \big(B(p,a_1+a_1)\setminus\{p\}\big)\]
and $y_1\in M$
such that $0<d(x_1,y_1)\le a_1+a_1$. Assume that we have found $x_1,\ldots,x_{n-1}\in M$ and $y_1,\ldots,y_{n-1}\in M$. Then choose 
\[x_n\in \bigcup_{p\in M} \big(B(p,a_1+a_1/n)\setminus\{p\}\big)\setminus \Big(\bigcup_{p\in M} \big(B(p,a_1-a_1/n)\setminus\{p\}\big)\cup\bigcup_{i=1}^{n-1}\{x_i,y_i\}\Big)\]
and $y_n\in M$
such that $0<d(x_n,y_n)\le a_1+a_1/n$. Clearly, $(x_i)$ is pairwise distinct. By moving to a subsequence we either have $(y_i)$ pairwise distinct or constant. Let us consider these cases separately.
If $(y_i)$ is pairwise distinct, then by moving to a subsequence we may also assume that
\[y_i\notin \bigcup_{p\in M} \big(B(p,a_1-a_1/i)\setminus\{p\}\big)\cup\bigcup_{j=1}^{i-1}\{x_j,y_j\}\]
for every $i\in\N$. Then
we may choose $a$ as $a_1$, $(u_i)$ as $(x_i)$, and $(v_i)$ as $(y_i)$. 

Now assume that $(y_i)$ is constant, i.e., there exists $z\in M$ such that $z=y_i$ for every $i\in \N$. Let
\[M_0=\{x_i\colon i\in\N\}.\]
By Lemma \ref{lem_equal_existance} there exist $a>0$ and a sequence $(y_i)\subseteq M_0$ such that
\[a\frac{i-1}{i}\le d(y_i,y_j)\le a\frac{i+1}{i}\]
for every $i,j\in\N$ with $i<j$.
Note that for any $i,j\in\N$ we have 
\[d(x_i,x_j)\le d(x_i,z)+d(x_j,z)\le a_1\frac{i+1}{i}+a_1\frac{j+1}{j}\]
and therefore $a\le2a_1$. We may choose $(u_i)$ as  $(y_{2i-1})$ and $(v_{i})$ as $(y_{2i})$. 
\end{proof}

\begin{thm}\label{Delta_existance}
If $M$ is an infinite metric space, then $\Lip_0(M)$ contains a $\Delta$-point.
\end{thm}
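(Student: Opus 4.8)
The plan is to split the argument according to the coarse geometry of $M$. If $M$ is unbounded or not uniformly discrete, then Corollary \ref{Daug_existance_cor} already produces a Daugavet-point in $\Lip_0(M)$, and every Daugavet-point is in particular a $\Delta$-point, so there is nothing more to do. Hence the real case is when $M$ is infinite, bounded, and uniformly discrete, and here I would apply Lemma \ref{lem_delta_existance} to fix $a>0$ and sequences $(u_i)$ and $(v_i)$ of pairwise distinct points satisfying the three distance estimates: $d(u_i,v_i)\to a$, the separation $\min\{d(u_i,p),d(v_i,p)\}\ge a(i-1)/i$ from all later clusters $p\in\{u_j,v_j\colon j>i\}$, and the separation $\min\{d(u_i,q),d(v_i,q)\}\ge a(i-1)/(2i)$ from every other point $q$.

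The idea is to build a function whose nearly-norming molecules are exactly the $m_{u_iv_i}$, arranged so that flipping one cluster at a time produces elements almost at distance $2$ from $f$ whose averages converge back to $f$. Concretely, I would set weights $c_i=\max\{0,a(i-2)/(2i)\}$, so that $c_i\uparrow a/2$, and define $f$ on the net $\{u_i,v_i\colon i\in\N\}\cup\{0\}$ by $f(u_i)=c_i$, $f(v_i)=-c_i$, and $f(0)=0$, extending to $M$ by the McShane--Whitney Theorem. The point of the precise value $c_i=a(i-2)/(2i)$ is that the worst Lipschitz constraint, namely $c_i+c_j\le d(u_i,v_j)$ for $j>i$ where the available separation is only $a(i-1)/i<a$, holds because $c_i+\sup_{j>i}c_j= a(i-1)/i$; all remaining pairs are controlled by the same estimates. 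This gives $\|f\|=1$ and $f(m_{u_iv_i})=2c_i/d(u_i,v_i)\to 1$.

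To verify that $f$ is a $\Delta$-point I would use the definition $f\in\clconv\Delta_\varepsilon(f)$ directly, which avoids any discussion of functionals in the complicated bidual $\Lip_0(M)^*$. For each $i$, let $g_i$ coincide with $f$ off the cluster $\{u_i,v_i\}$ and swap the values there, $g_i(u_i)=-c_i$ and $g_i(v_i)=c_i$; the same distance estimates give $\|g_i\|\le 1$, and since $g_i(m_{u_iv_i})=-2c_i/d(u_i,v_i)$ we obtain $\|f-g_i\|\ge(f-g_i)(m_{u_iv_i})=4c_i/d(u_i,v_i)\to 2$, so $g_i\in\Delta_\varepsilon(f)$ for all large $i$. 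The difference $\phi_i:=f-g_i$ is supported on $\{u_i,v_i\}$ with $\|\phi_i\|$ bounded by an absolute constant, and because the clusters are mutually separated by a distance comparable to $a$ (the second and third estimates of Lemma \ref{lem_delta_existance}), every difference quotient of $\tfrac1n\sum_{i=N}^{N+n-1}\phi_i$ is $O(1/n)$, whether it is taken within a cluster, across two clusters, or against a point outside all clusters. Thus $\tfrac1n\sum_{i=N}^{N+n-1}g_i\to f$ in norm, which places $f$ in $\clconv\Delta_\varepsilon(f)$ for every $\varepsilon>0$.

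The main obstacle is entirely in the bounded, uniformly discrete case, and within it in the tension governing the weights $(c_i)$: they must rise to $a/2$ fast enough that $f(m_{u_iv_i})\to1$ (so the molecules asymptotically norm $f$ and the flips reach distance $2$), yet slowly enough that the threshold inequality $c_i+c_j\le a(i-1)/i$ is respected, the separation $a(i-1)/i$ being strictly smaller than the target gap $a$. The choice $c_i=a(i-2)/(2i)$ sits exactly at this threshold, and once it is fixed the verifications of $\|f\|\le1$, $\|g_i\|\le1$, and the uniform bound on $\|\phi_i\|$ and on the cross-cluster quotients are routine, each step amounting to a direct application of one of the three estimates of Lemma \ref{lem_delta_existance}.
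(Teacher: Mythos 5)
Your proof follows the paper's own argument almost line by line: the same case split (Corollary \ref{Daug_existance_cor} disposes of the unbounded or non-uniformly-discrete case), the same use of Lemma \ref{lem_delta_existance}, the same antisymmetric weights $c_i=a(i-2)/(2i)$ on the clusters $\{u_i,v_i\}$, the same one-cluster flips $g_i$, and the same Ces\`aro-average estimate placing $f$ in $\clconv\Delta_\varepsilon(f)$. The threshold computation $c_i+\sup_{j>i}c_j=a(i-1)/i$ and the bound $f(m_{u_iv_i})\ge(i-2)/(i+1)$ are exactly as in the paper.

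There is, however, one genuine gap: the extension step. You define $f$ only on the net $\{u_i,v_i\colon i\in\N\}\cup\{0\}$, extend by McShane--Whitney, and then claim that ``the same distance estimates give $\|g_i\|\le 1$'' for the flipped functions. This claim fails for McShane extensions. For $f$ and $g_i$ to be simultaneously $1$-Lipschitz against a point $q$ outside the net you need the symmetric condition $|f(q)|+c_i\le\min\{d(u_i,q),d(v_i,q)\}$, whereas a McShane extension only guarantees the asymmetric conditions $|f(q)-c_i|\le d(u_i,q)$ and $|f(q)+c_i|\le d(v_i,q)$, which are tied to the values you are about to swap. Concretely, take $M=\{u_i,v_i\colon i\in\N\}\cup\{q\}$ with all distances equal to $a$ except $d(u_{i_0},q)=a/2$ for one large $i_0$; this $M$ is infinite, bounded, and uniformly discrete, and the sequences satisfy all three estimates of Lemma \ref{lem_delta_existance}. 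The upper McShane extension gives $f(q)=\inf_{x}\big(f(x)+d(x,q)\big)=a-\sup_j c_j=a/2$, so $|g_{i_0}(q)-g_{i_0}(u_{i_0})|=a/2+c_{i_0}=a(i_0-1)/i_0$ while $d(q,u_{i_0})=a/2$, i.e.\ $\|g_{i_0}\|\ge 2(i_0-1)/i_0>1$, so $g_{i_0}\notin\Delta_\varepsilon(f)\subseteq B_{\Lip_0(M)}$; the lower McShane extension fails in the mirror configuration around $v_{i_0}$. The repair is precisely the device the paper uses: extend $f$ (and hence every $g_i$) by $0$ on $M\setminus\{u_i,v_i\colon i\in\N\}$. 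Since $c_i<a(i-1)/(2i)\le\min\{d(u_i,q),d(v_i,q)\}$ by the third estimate of the lemma, this neutral extension keeps $f$ and all of its flips in $B_{\Lip_0(M)}$, and the rest of your argument then goes through unchanged.
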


\begin{proof}
If $M$ is unbounded or not uniformly discrete, then by Corollary \ref{Daug_existance_cor} there exists a Daugavet-point in $\Lip_0(M)$.

Assume that $M$ is bounded and uniformly discrete. By Lemma \ref{lem_delta_existance} there exist $a>0$ and two sequences $(u_i)$ and $(v_i)$ of pairwise distinct points in $M$ such that 
\[a\frac{i+1}{i}\ge d(u_i,v_i)\ge a\frac{i-1}{i}\]
and
\[\min\big\{d(u_i,p),d(v_i,p)\big\}\ge a\frac{i-1}{i}\]
and
\[\min\big\{d(u_i,q),d(v_i,q)\big\}\ge a\frac{i-1}{2i}\]
for all $i\in\N$, $p\in \{u_j,v_j\colon j>i\}$ and $q\in M\setminus \{u_i,v_i\}$. For simplicity assume that $u_1$ is the fixed point $0$.
Let
\[f(p)=\begin{cases}
        a(i-2)/(2i), & p=u_i\text{ for }i\in\N\setminus\{1\},\\
        -a(i-2)/(2i), & p=v_i\text{ for }i\in\N\setminus\{1\},\\
        0, & \text{otherwise}.
        \end{cases}\]
Then $\|f\|\le 1$ and
\[f(m_{u_iv_i})=\frac{a(i-2)}{id(u_i,v_i)}\ge \frac{i-2}{i+1},\]
which gives us $f\in S_{\Lip_0(M)}$. Now for every $i\in\N\setminus\{1\}$, let 
\[g_i(p)=\begin{cases}
        a(j-2)/(2j), & p=u_j\text{ for }j\in\N\setminus\{1,i\},\\
        -a(j-2)/(2j), & p=v_j\text{ for }j\in\N\setminus\{1,i\},\\
        -a(i-2)/(2i), & p=u_i,\\
        a(i-2)/(2i), & p=v_i,\\
        0, & \text{otherwise}.
        \end{cases}\]
Analogously to previous argumentation $g_i\in S_{\Lip_0(M)}$. Furthermore, for every $i,j\in\N\setminus\{1\}$ we have
\[\|f-g_i\|\ge f(m_{u_iv_i})-g_i(m_{u_iv_i})\ge2\frac{i-2}{i+1}\]
and since $h_k$ and $f$ can be different only in $u_k$ and $v_k$, we also get
\[\Big\|f-\frac{1}{i}\sum_{k=1}^{i}g_{k+j} \Big\|\le \frac{4}{i}.\]
Therefore $f\in \clconv\Delta_\varepsilon(f)$  for every $\varepsilon>0$, i.e., $f$ is a $\Delta$-point.
\end{proof}

\begin{rem}
The existence of $\Delta$-points in spaces of Lipschitz functions over infinite metric spaces can also be derived from the fact that such spaces always contain an isometric copy of $\ell_\infty$ (see {\cite[Theorem~5]{CJ}}). Indeed, $\ell_\infty$ contains many $\Delta$-points and if some subspace $Y$ of a Banach space $X$ contains a $\Delta$-point $y$, then $y$ is also a $\Delta$-point in $X$, since the set of points in $Y$ that are almost at distance $2$ from $y$ is contained in the set of points in $X$ that are almost at distance $2$ from $y$.
\end{rem}

It is natural to wonder if every space of Lipschitz functions over an infinite metric space $M$ also contains a Daugavet-point or a $w^*$-Daugavet-point. We know that it is the case if $M$ is unbounded or not uniformly discrete (see Corollary \ref{Daug_existance_cor}). However, the following example shows that there does exist an infinite-dimensional space of Lipschitz functions that does not contain any $w^*$-Daugavet-points.

\begin{eks}
Consider $M=\N$ with the metric
\[d(n,k)=3-\Big|\frac{1}{n}-\frac{1}{k}\Big|.\]
Fix $f\in S_{\Lip_0(M)}$. We will show that $f$ is not a $w^*$-Daugavet-point.
There does not exist $n,k\in M$ such that $f(m_{1n})>3/4$ and $f(m_{k1})>3/4$, because if such elements would exist, we would get
\[f(k)-f(n)=f(k)-f(1)+f(1)-f(n)>\frac{3}{4}\big(d(1,k)+d(1,n)\big)>3>d(k,n).\]

As $f$ and $-f$ are $w^*$-Daugavet-points at the same time, we may assume that there exists $n\in M\setminus\{1\}$ such that $f(m_{1n})\ge0$ and also $f(m_{k1})\le3/4$ for every $k\in M\setminus\{1\}$. Assume that $n\in M$ is the smallest element in $M\setminus\{1\}$ such that $f(m_{1n})\ge0$.

Consider $\mu=\sum_{i=1}^{n-1}m_{in}/(n-1)$. Clearly, $\mu\in S_{\mathcal{F}(M)}$.
Let
\[\alpha=\frac{1}{3n}-\frac{1}{3(n+1)}\]
and let $g\in S_{\mathcal{F}(M)}$ be such that $\|f-g\|>2-\alpha$. We will show that $g\notin S(\mu,\alpha/n)$. There exist $k,l\in M$ with $k\neq l$ such that $f(m_{kl})>1-\alpha$ and $g(m_{lk})>1-\alpha$. Then $l>1$, since $f(m_{kl})>3/4$. Assume $l< n$. Then $k\neq 1$. If $k=2$, then
\[f(1)-f(l)>f(1)-f(k)+(1-\alpha)d(k,l)\ge-2-\frac{1}{2}+(1-\alpha)\Big(3-\frac{1}{2}+\frac{1}{l}\Big)>0,\]
otherwise
\[f(1)-f(l)>f(1)-f(k)+(1-\alpha)d(k,l)>-2-\frac{1}{3}+(1-\alpha)\Big(3-\frac{1}{2}\Big)>0.\]
This is a contradiction and therefore $l\ge n$. Also $k\neq n$, because if $l\neq n$, then
\[f(n)-f(l)\le f(1)-f(l)\le 2+\frac{1}{l}\le3-\frac{1}{n}+\frac{1}{l}-\frac{1}{2}<(1-1/6)d(n,l).\]
Consider three cases.

\textbf{Case 1.}  Assume $1\le k< n$. Then 
\begin{align*}
    g(k)-g(n)&<g(l)-g(n)-(1-\alpha)d(l,k)\\
    &\le d(l,n)-(1-\alpha)d(l,k)\\
    &< 3-2(1-\alpha)\\
    &<2(1-\alpha)\\
    &< (1-\alpha)d(k,n).
\end{align*}

\textbf{Case 2.}  Assume $l=n$. Then analogously to Case 1, we get
\begin{align*}
    g(1)-g(n)&<g(1)-g(k)-(1-\alpha)d(n,k)< (1-\alpha)d(1,n).
\end{align*}

\textbf{Case 3.} Assume $k,l>n$. Then 
\begin{align*}
    g(1)-g(n)&<g(1)-g(k)+g(l)-g(n)-(1-\alpha)\Big(3-\Big|\frac{1}{k}-\frac{1}{l}\Big|\Big)\\
    &< 2+\frac{1}{k}+3-\frac{1}{n}+\frac{1}{l}-3+\Big|\frac{1}{k}-\frac{1}{l}\Big| +3\alpha\\
    &= 2-\frac{1}{n}+\frac{1}{k}+\frac{1}{l}+\Big|\frac{1}{k}-\frac{1}{l}\Big|+\frac{2}{n}-\frac{2}{n+1} -3\alpha \\
    &\le 2+\frac{1}{n}-3\alpha\\
    &\le (1-\alpha)d(1,n).
\end{align*}
Therefore $g\notin S(m_{in},\alpha)$ for some $i\in\{1,\ldots,n-1\}$.
This means that $g\notin S(\mu,\alpha/n)$, since $S(\mu,\alpha/n)\subseteq S(m_{in},\alpha)$. Therefore $f$ is not a $w^*$-Daugavet-point.
\end{eks}

By Theorem \ref{Delta_existance} there exists a $\Delta$-point in the space of Lipschitz functions introduced in the previous example; therefore we now know that Daugavet- and $\Delta$-points are not equivalent in spaces of Lipschitz functions. We will finish this paper by showing that Daugavet- and $\Delta$-points also differ from their $w^*$-versions in spaces of Lipschitz functions.
\begin{eks}
Set
\begin{align*}
    X&=\{x_i\colon i\in\N\},\\
    Y&=\{y_i\colon i\in\N\},\\
    U&=\{u_i\colon i\in\N\},\\
    V&=\{v_i\colon i\in\N\},
\end{align*}
and $M=X\cup Y\cup U\cup V$. Define distance by
\[d(p,q)=\begin{cases} 
    1, & p=u_i, q\in \{x_j,u_j\} \text{ for some }i> j,\\  
    1, & q=u_i, p\in \{x_j,u_j\} \text{ for some }i> j,\\  
    1, & p=v_i, q\in \{y_j,v_j\} \text{ for some }i> j,\\ 
    1, & q=v_i, p\in \{y_j,v_j\} \text{ for some }i> j,\\ 
    2, & \text{otherwise}.\end{cases}\]
Define $f\colon M\rightarrow\mathbb{R}$ by
\[f(p)=\begin{cases}
        2, & p\in X\cup U,\\
        0, & p\in Y\cup V.
        \end{cases}\]
        
Note that $f(m_{pq})>0$ if and only if $p\in X\cup U$ and $q\in Y\cup V$ and in that case $f(m_{pq})=1$.

First we show that $f$ is a $w^*$-Daugavet-point. Fix $\alpha>0$ and $\mu\in S_{\mathcal{F}(M)}$ with finite support. We know that elements with finite support are dense in the unit sphere and therefore it suffices to show that there exists $h\in S(\mu,\alpha)$ such that $\|f-h\|=2$. Let $g\in S(\mu,\alpha)$ and let $n\in \N$ be such that $\supp(\mu)\subseteq \cup_{i=1}^n\{x_i,y_i,u_i,v_i\}$. Set
\[a=\frac{1}{2}\Big(\max \big\{g(p)\colon p\in\cup_{i=1}^n\{x_i,y_i,u_i,v_i\}\big\}+\min \big\{g(p)\colon p\in\cup_{i=1}^n\{x_i,y_i,u_i,v_i\}\big\}\Big)\]
and define $h\colon M\rightarrow\mathbb{R}$ by
\[h(p)=\begin{cases}
        g(p), & p\in \cup_{i=1}^n\{x_i,y_i,u_i,v_i\},\\
        a-1, & p\in \cup_{i=n+1}^\infty \{x_i\},\\
        a+1, & p\in \cup_{i=n+1}^\infty \{y_i\},\\
        a, & p\in \cup_{i=n+1}^\infty \{u_i,v_i\}.
        \end{cases}\]
Then $\|h\|\le 1$ because 
\begin{align*}
    |a-g(p)|&\le\frac{1}{2}\Big(\max \big\{g(p)\colon p\in\cup_{i=1}^n\{x_i,y_i,u_i,v_i\}\big\}\\
    &\qquad-\min \big\{g(p)\colon p\in\cup_{i=1}^n\{x_i,y_i,u_i,v_i\}\big\}\Big)\le 1
\end{align*}
for every $p\in \cup_{i=1}^n\{x_i,y_i,u_i,v_i\}$.
Clearly, $h(\mu)=g(\mu)>1-\alpha$ and 
\[\|f-h\|\ge f(m_{x_{n+1}y_{n+1}})-h(m_{x_{n+1}y_{n+1}})=2.\]
Therefore $f$ is a $w^*$-Daugavet-point.

Finally, we show that $f$ is not a $\Delta$-point. For $\varepsilon\in(0,1/2)$ and $m\in \N$, let us have functions $g_1,\ldots,g_k\in \Delta_\varepsilon(f)$. Then for each $i\in \{1,\ldots,k\}$ there exists $p_i,q_i\in M$ such that $f(m_{p_iq_i})-g_i(m_{p_iq_i})>2-2\varepsilon$. Then $f(m_{p_iq_i})>0$ and therefore $p_i\in X\cup U$ and $q_i\in Y\cup V$ for every $i\in \{1,\ldots,k\}$. Let $n\in\N$ be such that $\{p_1,\ldots,p_k,q_1,\ldots,q_k\}\subseteq \cup_{i=1}^{n}\{x_i,y_i,u_i,v_i\}$. By {\cite[Lemma~1.2]{Vee}} we have
\begin{align*}
    \|m_{u_{n+1}v_{n+1}}-m_{p_iq_i}\|&= \frac{d(u_{n+1},p_i)+d(q_i,v_{n+1})+|d(u_{n+1},v_{n+1})-d(p_i,q_i)|}{\max\big\{d(u_{n+1},v_{n+1}),d(p_i,q_i)\big\}}\\
    &=\frac{1+1+|2-2|}{\max\big\{2,2\big\}}\\
    &=1
\end{align*}
for every $i\in \{1,\ldots,k\}$. Therefore
\[g_i(m_{u_{n+1}v_{n+1}})<g_i(m_{u_{n+1}v_{n+1}})-g_i(m_{p_iq_i})-1+2\varepsilon\le 2\varepsilon\]
for every $i\in \{1,\ldots,k\}$. Hence \[f(m_{u_{n+1}v_{n+1}})-\sum_{i=1}^k\lambda_ig_i(m_{u_{n+1}v_{n+1}})>1-2\varepsilon\]
for all $\lambda_1,\ldots,\lambda_k\ge0$ with $\sum_{i=1}^k\lambda_i=1$. Thus $f\notin \clconv\Delta_\varepsilon(f)$, i.e., $f$ is not a $\Delta$-point.
\end{eks}

\section*{Acknowledgements}
This paper is a part of the author's Ph.D.\ thesis, which is being prepared at the University of Tartu under the supervision of Rainis Haller and Vegard Lima. 
The author is grateful to her supervisors for their valuable help and guidance. The author is also thankful to   Rainis Haller, Andre Ostrak, and Märt Põldvere for sharing their preprint.
 
This work was supported by the Estonian Research
Council grant (PRG1901).

%\addcontentsline{toc}{section}{References}
%\bibliography{bibliography}{}
%\bibliographystyle{amsplain}

\bibliographystyle{amsplain}
\footnotesize

\end{document}